\documentclass[11pt, reqno]{article}


\usepackage{amsmath,amssymb,amsfonts,amsthm}
\usepackage{color}

\usepackage[colorlinks=true,linkcolor=blue,citecolor=blue,urlcolor=blue,pdfborder={0 0 0}]{hyperref}
\usepackage{cleveref}

\usepackage{caption}
\usepackage{thmtools}

\usepackage{mathrsfs}

\crefname{theorem}{Theorem}{Theorems}
\crefname{thm}{Theorem}{Theorems}
\crefname{lemma}{Lemma}{Lemmas}
\crefname{lem}{Lemma}{Lemmas}
\crefname{remark}{Remark}{Remarks}
\crefname{prop}{Proposition}{Propositions}
\crefname{defn}{Definition}{Definitions}
\crefname{corollary}{Corollary}{Corollaries}
\crefname{conjecture}{Conjecture}{Conjectures}
\crefname{question}{Question}{Questions}
\crefname{chapter}{Chapter}{Chapters}
\crefname{section}{Section}{Sections}
\crefname{figure}{Figure}{Figures}
\crefname{example}{Example}{Examples}

\theoremstyle{plain}
\newtheorem{thm}{Theorem}[section]

\newtheorem{lemma}[thm]{Lemma}
\newtheorem{theorem}[thm]{Theorem}

\newtheorem{corollary}[thm]{Corollary}

\newtheorem{prop}[thm]{Proposition}

\newtheorem{question}[thm]{Question}
\theoremstyle{definition}

\newtheorem{problem}[thm]{Problem}

\theoremstyle{remark}
\newtheorem{remark}[thm]{Remark}

\numberwithin{equation}{section}


\renewcommand{\P}{\mathbb P}

\newcommand{\R}{\mathbb R}
\newcommand{\Z}{\mathbb Z}
\newcommand{\N}{\mathbb N}


\newcommand{\cB}{\mathcal B}

\newcommand{\cF}{\mathcal F}


\newcommand{\sA}{\mathscr A}
\newcommand{\sB}{\mathscr B}
\newcommand{\sC}{\mathscr C}
\newcommand{\sD}{\mathscr D}


\newcommand{\bbH}{\mathbb H}





\usepackage[margin=1in]{geometry}
\usepackage{tikz}
\usepackage{pgfplots}
\usepackage{extarrows}
\usepackage{titling}
\usepackage{commath}
\usepackage{wasysym}
\usepackage{mathtools}
\usepackage{titlesec}
\newcommand{\eps}{\varepsilon}
\usepackage{bbm}
\usepackage{setspace}
\setstretch{1}
\usepackage{enumitem}
\usepackage{booktabs}

\usepackage[textsize=tiny]{todonotes}

\usepackage{cite}

\newcommand{\bP}{\mathbf P}
\newcommand{\bE}{\mathbf E}

\newcommand{\opleq}{\preccurlyeq}



\def\P{\mathbb{P}}








\newcommand{\sS}{\mathscr{S}}

\DeclareMathSymbol{\leqslant}{\mathalpha}{AMSa}{"36} 
\DeclareMathSymbol{\geqslant}{\mathalpha}{AMSa}{"3E} 
\DeclareMathSymbol{\eset}{\mathalpha}{AMSb}{"3F}     




\renewcommand{\epsilon}{\varepsilon}








\makeatletter
\pgfdeclareshape{slash underlined}
{
  \inheritsavedanchors[from=rectangle] 
  \inheritanchorborder[from=rectangle]
  \inheritanchor[from=rectangle]{north}
  \inheritanchor[from=rectangle]{north west}
  \inheritanchor[from=rectangle]{north east}
  \inheritanchor[from=rectangle]{center}
  \inheritanchor[from=rectangle]{west}
  \inheritanchor[from=rectangle]{east}
  \inheritanchor[from=rectangle]{mid}
  \inheritanchor[from=rectangle]{mid west}
  \inheritanchor[from=rectangle]{mid east}
  \inheritanchor[from=rectangle]{base}
  \inheritanchor[from=rectangle]{base west}
  \inheritanchor[from=rectangle]{base east}
  \inheritanchor[from=rectangle]{south}
  \inheritanchor[from=rectangle]{south west}
  \inheritanchor[from=rectangle]{south east}
  \inheritanchorborder[from=rectangle]
  \foregroundpath{
    \southwest \pgf@xa=\pgf@x \pgf@ya=\pgf@y
    \northeast \pgf@xb=\pgf@x \pgf@yb=\pgf@y
    \pgf@xc=\pgf@xa
    \advance\pgf@xc by .5\pgf@xb
    \pgf@yc=\pgf@ya
    \advance\pgf@xc by -1.3pt
    \advance\pgf@yc by -1.8pt
    \pgfpathmoveto{\pgfqpoint{\pgf@xc}{\pgf@yc}}
    \advance\pgf@xc by  2.6pt
    \advance\pgf@yc by  3.6pt
    \pgfpathlineto{\pgfqpoint{\pgf@xc}{\pgf@yc}}
    \pgfpathmoveto{\pgfqpoint{\pgf@xa}{\pgf@ya}}
    \pgfpathlineto{\pgfqpoint{\pgf@xb}{\pgf@ya}}
 }
}
\tikzset{nomorepostaction/.code=\let\tikz@postactions\pgfutil@empty}

\makeatother


\makeatletter
\DeclareFontFamily{OMX}{MnSymbolE}{}
\DeclareSymbolFont{MnLargeSymbols}{OMX}{MnSymbolE}{m}{n}
\SetSymbolFont{MnLargeSymbols}{bold}{OMX}{MnSymbolE}{b}{n}
\DeclareFontShape{OMX}{MnSymbolE}{m}{n}{
    <-6>  MnSymbolE5
   <6-7>  MnSymbolE6
   <7-8>  MnSymbolE7
   <8-9>  MnSymbolE8
   <9-10> MnSymbolE9
  <10-12> MnSymbolE10
  <12->   MnSymbolE12
}{}
\DeclareFontShape{OMX}{MnSymbolE}{b}{n}{
    <-6>  MnSymbolE-Bold5
   <6-7>  MnSymbolE-Bold6
   <7-8>  MnSymbolE-Bold7
   <8-9>  MnSymbolE-Bold8
   <9-10> MnSymbolE-Bold9
  <10-12> MnSymbolE-Bold10
  <12->   MnSymbolE-Bold12
}{}

\let\llangle\@undefined
\let\rrangle\@undefined
\DeclareMathDelimiter{\llangle}{\mathopen}%
                     {MnLargeSymbols}{'164}{MnLargeSymbols}{'164}
\DeclareMathDelimiter{\rrangle}{\mathclose}%
                     {MnLargeSymbols}{'171}{MnLargeSymbols}{'171}
\makeatother



\pretitle{\begin{flushleft}\Large}
\posttitle{\par\end{flushleft}\vskip 0.5em
}
\preauthor{\begin{flushleft}}
\postauthor{
\\
\vspace{0.5em}
\footnotesize{The Division of Physics, Mathematics and Astronomy, California Institute of Technology\\ Email: 
\href{mailto:t.hutchcroft@caltech.edu}{t.hutchcroft@caltech.edu}}
\end{flushleft}}
\predate{\begin{flushleft}}
\postdate{\par\end{flushleft}}
\title{\bf Sharp hierarchical upper bounds on the critical two-point function for long-range percolation on $\Z^d$}

\renewenvironment{abstract}
 {\par\noindent\textbf{\abstractname.}\ \ignorespaces}
 {\par\medskip}

\titleformat{\section}
  {\normalfont\large \bf}{\thesection}{0.4em}{}

\author{{\bf Tom Hutchcroft}}

\begin{document}

\date{\small{\today}}

\maketitle

\setstretch{1.1}

\begin{abstract}
Consider long-range Bernoulli percolation on $\Z^d$ in which we connect each pair of distinct points $x$ and $y$ by an edge with probability  $1-\exp(-\beta\|x-y\|^{-d-\alpha})$, where $\alpha>0$ is fixed and $\beta\geq 0$ is a parameter. We prove that if $0<\alpha<d$ then the critical two-point function satisfies 
\[
\frac{1}{|\Lambda_r|}\sum_{x\in  \Lambda_r} \bP_{\beta_c}(0\leftrightarrow x) \preceq r^{-d+\alpha}
\]
for every $r\geq 1$, where $\Lambda_r=[-r,r]^d \cap \Z^d$. In other words, the critical two-point function on $\Z^d$ is always bounded above on average by the critical two-point function on the hierarchical lattice. This upper bound is believed to be sharp for values of $\alpha$ strictly below the \emph{crossover value} $\alpha_c(d)$, where the values of several critical exponents for long-range percolation on $\Z^d$ and the hierarchical lattice are believed to be equal.



\end{abstract}

\section{Introduction}
\label{sec:intro}

Let $d\geq 1$ and let $J:\Z^d \times \Z^d \to [0,\infty)$ be a kernel that is \textbf{symmetric} in the sense that $J(x,y)=J(y,x)$ for every $x,y\in \Z^d$. For each $\beta\geq 0$, \textbf{long-range percolation} on $\Z^d$ with kernel $J$ is the random graph with vertex set $\Z^d$ in which each potential edge $\{x,y\}$ is included independently at random with probability $1-\exp(-\beta J(x,y))$. We write $\bP_\beta=\bP_{\beta,J}$ for the law of the resulting random graph. We will be particularly interested in the case that $J$ is \textbf{translation-invariant}, meaning that $J(x,y)=J(0,y-x)$ for every $x,y\in \Z^d$ and \textbf{integrable}, meaning that $\sum_{y\in \Z^d} J(x,y)<\infty$ for every $x$. For many purposes, the most interesting case (besides nearest-neighbour models) occurs when $J(x,y)$ decays like an inverse power of $\|x-y\|$, so that
\begin{equation}
J(x,y) \sim A \|x-y\|^{-d-\alpha} \qquad \text{ as $x-y\to\infty$}
\end{equation}
for some constants $A>0$ and $\alpha>0$; smaller values of $\alpha$ make longer edges more likely. Percolation theorists are particularly interested in the geometry of the \textbf{clusters} (connected components) of this random graph and how this geometry changes as the parameter $\beta$ is varied. Indeed, as with nearest-neighbour percolation, much of the interest of the model stems from the fact that it typically undergoes a \emph{phase transition}, in which an infinite cluster emerges as $\beta$ is varied through the \textbf{critical value}
\[
\beta_c=\beta_c(J):=\inf\{\beta \geq 0: \bP_\beta(\text{an infinite cluster exists})>0 \},
\]
which satisfies $0<\beta_c<\infty$ if $d\geq 2$ and $\alpha>0$ or $d=1$ and $0<\alpha \leq 1$ \cite{schulman1983long,newman1986one}. In this paper we  study the behaviour of the model \emph{at criticality} (i.e., when $\beta=\beta_c$), where one expects a rich, fractal-like geometry to emerge \cite{ding2013distances,MR3306002,MR2430773,duminil2020long,hutchcroft2020power}; see \cite{heydenreich2015progress} and the very recent paper \cite{biskup2021arithmetic} for detailed literature reviews regarding other aspects of the model.

\medskip

Perhaps surprisingly, long-range percolation is understood rather better at criticality than nearest-neighbour percolation, at least for small values of $\alpha$. Indeed, Noam Berger \cite{MR1896880} proved in 2002  that the phase transition is continuous in the sense that there are no infinite clusters at $\beta_c$ whenever $0<\alpha <d$, while the analogous statement for nearest-neighbour percolation in dimensions $3\leq d\leq 6$ is a notorious open problem. Although it is believed that the phase transition should be continuous for all $\alpha>0$ when $d\geq 2$, Berger's result is best possible in general since the model has a \emph{discontinuous} phase transition when $d=\alpha=1$ \cite{MR868738,duminil2020long}.

\medskip

\textbf{Critical exponents.}
Once one knows that the phase transition is continuous, it becomes a question of central interest to understand the \emph{critical exponents} associated to the model, which are believed to describe the large-scale geometry of the model at and near criticality \cite[Chapters 9 and 10]{grimmett2010percolation}. 
We will focus on the exponents traditionally denoted by $\delta$ and $\eta$ which, if they exist, are defined to satisfy
\begin{align*}
\bP_{\beta_c}(|K| \geq n) &\approx n^{-1/\delta} &\text{ as }n&\to\infty\\
\hspace{-2cm}\text{and} \hspace{2cm} \bP_{\beta_c}(x\leftrightarrow y) &\approx \|x-y\|^{-d+2-\eta} &\text{ as }\|x-y\|&\to\infty,
\end{align*}
where $\approx$ means that the ratio of the logarithms of the two sides tends to $1$ in the relevant limit, $K=K(0)$ denotes the cluster of the origin, and $\{x\leftrightarrow y\}$ denotes the event that $x$ and $y$ are connected (i.e., belong to the same cluster). We will often refer to the connection probability $\bP_{\beta_c}(x\leftrightarrow y)$ as the \textbf{two-point function}.
 These exponents are expected to depend on the dimension $d$ and the long-range parameter $\alpha$ but not on the small-scale details of the model such as the precise choice of kernel $J$. Computing and/or proving the existence of critical exponents is typically a very challenging problem that is of central importance throughout mathematical physics. For percolation, progress has been limited mostly to the 
\emph{high-dimensional} case ($d>6$ or $\alpha<d/3$), where the lace expansion \cite{MR2239599,hara1994mean,heydenreich2015progress} has been developed as a powerful and general method for proving that these exponents take their \emph{mean-field} values $\delta=2$ and $\eta=0 \vee (2-\alpha)$ \cite{MR1043524,MR762034,MR1127713,fitzner2015nearest,MR3306002,MR1959796,MR2430773}, and to the 
  (very special) case of site percolation on the triangular lattice where the theory of conformally-invariant processes applies \cite{MR879034,smirnov2001critical,smirnov2001critical2,lawler2002one} and these exponents are known to be $\delta=91/5$ and $\eta=5/24$ as predicted by Nienhuis \cite{nienhuis1987coulomb}. There are no conjectured exact values for these exponents for nearest-neighbour percolation in dimensions $3$, $4$, or $5$, in which case their values are likely to be transcendental. 

\medskip

While critical exponents in low dimensions remain rather mysterious quantities in general, there is a surprisingly simple prediction for the dependence of these exponents on the long-range parameter $\alpha$ when $d$ is fixed: It is believed that if $\eta_\mathrm{SR}=\eta_\mathrm{SR}(d)$ denotes the analogous critical exponent for nearest-neighbour percolation then 
\begin{equation}
\label{eq:eta_prediction}
2-\eta = \begin{cases} \alpha & \alpha \leq \alpha_c\\
2-\eta_\mathrm{SR} & \alpha>\alpha_c,
\end{cases}
\end{equation}
where the \emph{crossover value} $\alpha_c=2-\eta_\mathrm{SR}$ is the unique value making this function continuous. In particular, when $d<6$ the exponent $2-\eta$ is expected to `stick' to its mean-field value\footnote{The fact that $2-\eta=\alpha \wedge 2$ is the mean-field value of this exponent is related to the fact that the inverse of the fractional Laplacian $(-\Delta)^{\alpha/2}$ decays as $\|x\|^{-d+\alpha}$ for $\alpha<2$ and as $\|x\|^{-d+2}$ for $\alpha>2$ \cite[Sections 2 and 3]{MR3772040}.} of $\alpha \wedge 2$ when $\alpha$ belongs to the interval $[d/3,\alpha_c \wedge 2]$, even though the other exponents (such as $\delta$, see \eqref{eq:delta_prediction}) are not expected to take their mean-field values in this interval. 
This prediction first arose in the 1973 work of Sak \cite{sak1973recursion} in the context of the Ising model, with similar predictions for percolation discussed in various places in the physics literature \cite{brezin2014crossover,behan2017scaling,luijten1997interaction}; a detailed numerical study of these predictions for one-dimensional long-range percolation is given in \cite{gori2017one}.
We refer the reader to \cite{MR3306002,MR4032873,MR2430773} for rigorous proofs in certain high-dimensional cases and to \cite{MR3772040,MR3723429} for related results for the long-range spin $O(n)$ model.
Applying the (conjectural) scaling and hyperscaling relations relating $\eta$ and $\delta$ outside the mean-field regime, \eqref{eq:eta_prediction} leads to the prediction
\begin{equation}
\label{eq:delta_prediction}
\delta = 
\begin{cases}
2 & \hspace{1.855em}0< \alpha \leq d/3\\
(d+\alpha)/(d-\alpha) & \hspace{0.84em}d/3 \leq \alpha \leq \alpha_c\\
\delta_\mathrm{SR} & \hspace{1.325em}\alpha_c \leq \alpha <\infty,
\end{cases}
\end{equation}
where $\delta_\mathrm{SR}$ is the analogous critical exponent for the nearest-neighbour model. Unfortunately, a complete proof of either \eqref{eq:eta_prediction} or \eqref{eq:delta_prediction} seems well beyond the scope of existing methods in low dimensions. 

\pgfplotsset{width=0.375\textwidth}
\pgfplotsset{compat=newest}

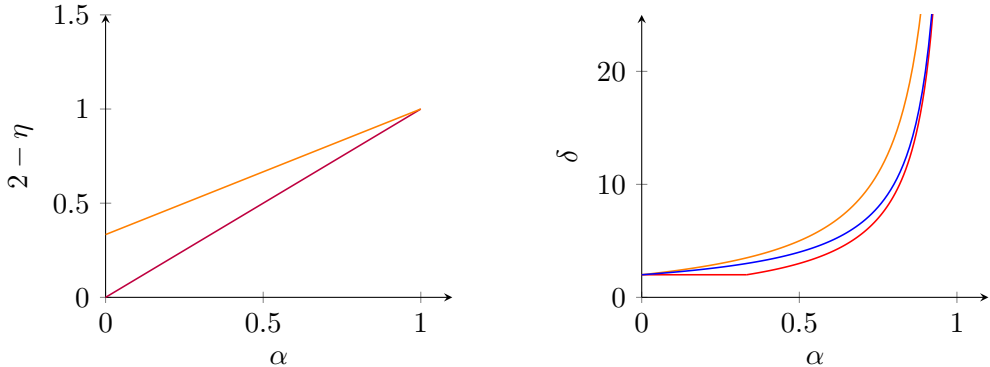
\begin{figure}[t]
\centering
\begin{tikzpicture}
\begin{axis}[
    axis lines = left,
    xlabel = $\alpha$,
    ylabel = {$2-\eta$},
    xmin=0,xmax=1.1,
    ymax=1.5
]
\addplot [
    domain=0:1, 
    samples=100, 
    color=purple,
    semithick
]
{x};
\addplot [
    domain=0:1, 
    samples=100, 
    color=orange,
    semithick
    ]
    {(1/3)+(2*x)/3};
\end{axis}
\end{tikzpicture}
    \hspace{1cm}
    \begin{tikzpicture}
\begin{axis}[
    axis lines = left,
    xlabel = $\alpha$,
    ylabel = {$\delta$},
    xmin=0, xmax=1.1,
    ymin=0, ymax=25
]
\addplot [
    domain=0:1/3, 
    samples=100, 
    color=red,
    semithick
]
{2};
\addplot [
    domain=1/3:0.99, 
    samples=100, 
    color=red,
    semithick
]
{(1+x)/(1-x)};
\addplot [
    domain=0:0.99, 
    samples=100, 
    color=orange,
    semithick
    ]
    {(2+x)/(1-x)};
    \addplot [
    domain=0:0.99, 
    samples=100, 
    color=blue,
    semithick
    ]
    {2/(1-x)};
\end{axis}
\end{tikzpicture}
    \caption{(Exponent estimates for $d=1$.) Our new upper bounds (blue) vs.\ the conjectured true values (red) and the upper bounds proven in \cite{hutchcroft2020power} (orange) of $2-\eta$ and $\delta$ when $d=1$. The part of the graph where our upper bound coincides exactly with the conjectured true value is represented in purple (in fact this is the entire graph of $2-\eta$ in this case). For $d=1$ and $\alpha>1$ there is no phase transition and the exponents are not defined.}
    \label{fig:1d}
    \vspace{-1em}
\end{figure}

\pgfplotsset{width=0.32\textwidth}
\pgfplotsset{compat=newest}

\begin{figure}[t]
\centering
\begin{tikzpicture}
\begin{axis}[
    axis lines = left,
    xlabel = $\alpha$,
    ylabel = {$2-\eta$},
    xmin=0,xmax=3,
    ymax=2.5
]
\addplot [
    domain=0:43/24, 
    samples=100, 
    color=purple,
    semithick
]
{x};
\addplot [
    domain=43/24:3, 
    samples=100, 
    color=red,
    semithick
]
{43/24};
\addplot [
    domain=0:2, 
    samples=100, 
    color=orange,
    semithick
    ]
    {(2/3)+(2*x)/3};
    \addplot [
    domain=43/24:2, 
    samples=100, 
    color=blue,
    semithick
    ]
    {x};
\end{axis}
\end{tikzpicture}
    \hspace{0.25cm}
    \begin{tikzpicture}
\begin{axis}[
    axis lines = left,
    xlabel = $\alpha$,
    ylabel = {$\delta$},
    xmin=0, xmax=3,
    ymin=0, ymax=25
]
\addplot [
    domain=0:2/3, 
    samples=100, 
    color=red,
    semithick
]
{2};
\addplot [
    domain=2/3:43/24, 
    samples=100, 
    color=red,
    semithick
]
{(2+x)/(2-x)};
\addplot [
    domain=43/24:3, 
    samples=100, 
    color=red,
    semithick
]
{91/5};
\addplot [
    domain=0:1.8, 
    samples=100, 
    color=orange,
    semithick
    ]
    {(4+x)/(2-x)};
    \addplot [
    domain=0:1.9, 
    samples=100, 
    color=blue,
    semithick
    ]
    {4/(2-x)};
\end{axis}
\end{tikzpicture}
   \hspace{0.25cm}
    \begin{tikzpicture}
\begin{axis}[
    axis lines = left,
    xlabel = $\alpha$,
    ylabel = {$\delta$},
    xmin=1.7, xmax=1.9,
    ymin=14, ymax=23
]
\addplot [
    domain=1.7:43/24, 
    samples=100, 
    color=red,
    semithick
]
{(2+x)/(2-x)};
\addplot [
    domain=43/24:1.9, 
    samples=100, 
    color=red,
    semithick
]
{91/5};
\addplot [
    domain=1.7:1.8, 
    samples=100, 
    color=orange,
    semithick
    ]
    {(4+x)/(2-x)};
    \addplot [
    domain=1.7:1.9, 
    samples=100, 
    color=blue,
    semithick
    ]
    {4/(2-x)};
\end{axis}
\end{tikzpicture}
    \caption{(Exponent estimates for $d=2$.) Our new upper bounds (blue) vs.\ the conjectured true values (red) and the upper bounds proven in \cite{hutchcroft2020power} (orange) of $2-\eta$ and $\delta$ when $d=2$. The part of the graph where our upper bound coincides exactly with the conjectured true value is represented in purple. Our upper bound on $\delta$ is within $6\%$ of the conjectured true value for $d=2$ and $\alpha=\alpha_c=43/24$; the rightmost plot is a zoomed-in copy of the middle plot around this value.}
    \label{fig:2d}
    \vspace{-1em}
\end{figure}
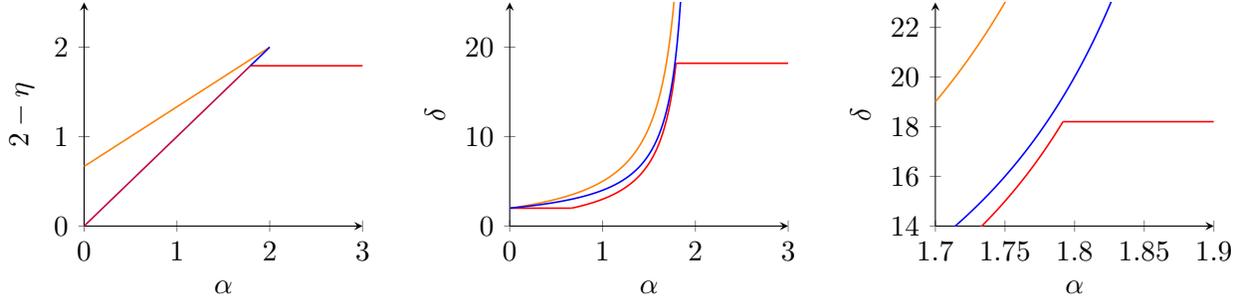

In our recent work \cite{hutchcrofthierarchical}, we proved up-to-constants estimates on the critical two-point function for long-range percolation on the \emph{hierarchical lattice} implying that the critical exponent $\eta$ always satisfies $2-\eta=\alpha$ in this case. (It remains open to compute various other exponents including $\delta$ in this setting.)
In light of this work, the prediction \eqref{eq:eta_prediction} has the following interpretation: For $\alpha<\alpha_c$ long-range effects dominate and the long-range Euclidean model has the same exponents\footnote{Here we are avoiding making the stronger statement that the models \emph{belong to the same universality class} since this claim would arguably be too strong. Indeed, while the models may share exponents, they should not have a common scaling limit as in the Euclidean case relevant continuum limit should be defined on $\R^d$ while in the hierarchical case it should be defined on the $L$-adic numbers. See \cite{LimitsOfUniversality} for detailed discussions of related phenomena. It is also unclear at present whether one should expect the exponents describing off-critical behaviour to coincide.} as the long-range hierarchical model with the same parameter, while for $\alpha>\alpha_c$ short-range effects dominate and the long-range Euclidean model has the same exponents as the nearest-neighbour Euclidean model. In particular, we find it helpful to think of the value $\eta=2-\alpha$ taken below $\alpha<\alpha_c$ as being the \emph{hierarchical value} of $\eta$ rather than the mean-field value of $\eta$ per se. When $\alpha=\alpha_c$ the two effects are comparable and logarithmic corrections to scaling are expected to be present. Rigorous results in high dimensions have been obtained via a lace expansion analysis by Chen and Sakai \cite{MR4032873,sakai2018crossover,MR3306002}, who prove in particular that if $d>6$ and $J$ satisfies a certain perturbative criterion (i.e., is sufficiently `spread-out') then
\begin{equation}
\bP_{\beta_c}(0 \leftrightarrow x) \asymp \begin{cases} 
\|x\|^{-d+\alpha} & \alpha<2\\
 \|x\|^{-d+2} \frac{1}{\log \|x\|} & \alpha = 2\\
 \|x\|^{-d+2} & \alpha > 2
\end{cases}
\qquad \text{ as $\|x\|\to\infty$.}
\end{equation}
 When $d=6$ and $\alpha=2$ there should be two competing sources of logarithmic corrections 
from being both at the upper-critical dimension and at the crossover value.
Note that for $d=1$ we expect the Euclidean and hierarchical models to have the same exponents for all $0<\alpha<1$, while for $\alpha=1$ the Euclidean model has a discontinuous phase transition and the hierarchical model has no phase transition at all. 

\medskip



\textbf{Our results.}
In our previous work \cite{hutchcroft2020power} we made a modest first step towards the understanding of the problem by proving power-law upper bounds on the two-point function and cluster volume tail for long-range percolation with $0<\alpha<d$, but with exponents strictly larger than those predicted by \eqref{eq:eta_prediction} and \eqref{eq:delta_prediction}. In this paper we significantly improve upon this result by proving an upper bound on the two-point function which matches the conjectured true behaviour for $\alpha$ below the crossover value $\alpha_c$. 
We write $\Lambda_r=[-r,r]\cap \Z^d$ for each $r\geq 1$ and write $\|\cdot\|=\|\cdot\|_\infty$.

\begin{theorem}
\label{thm:main}
Let $d\geq 1$, let $J:\Z^d\times \Z^d \to [0,\infty)$ be a symmetric, integrable, translation-invariant kernel, and suppose that there exist constants $0<\alpha<d$ and $c>0$ such that $J(x,y) \geq c\|x-y\|^{-d-\alpha}$ for all distinct $x,y \in \Z^d$. Then there exists a constant $A=A(d,\alpha)$ such that
\[
\frac{1}{|\Lambda_r|}\sum_{x\in  \Lambda_r} \P_{\beta_c}(0\leftrightarrow x) \leq \frac{A}{c\beta_c} r^{-d+\alpha}
\] 
for every $r\geq 1$. In particular, the exponent $\eta$ satisfies $2-\eta \leq \alpha$ if it is well-defined.
\end{theorem}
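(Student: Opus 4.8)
The plan is to run a multi-scale/bootstrap argument on the averaged two-point function. Define the "ball-averaged" two-point function
\[
\phi(r) := \frac{1}{|\Lambda_r|}\sum_{x\in\Lambda_r}\bP_{\beta_c}(0\leftrightarrow x),
\]
and the goal is to show $\phi(r)\le A(c\beta_c)^{-1}r^{-d+\alpha}$. The strategy is to prove a self-improving inequality roughly of the shape: if $\phi(r)\le C r^{-\theta}$ for all $r$ and some exponent $\theta< d-\alpha$, then in fact $\phi(r)\le C' r^{-\theta'}$ for a strictly larger $\theta'$ (with $C'$ controlled), which bootstraps up to the critical exponent $d-\alpha$; and then a separate argument shows the inequality cannot be improved past $d-\alpha$ but also that $\theta=d-\alpha$ is actually attained with the stated constant. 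The initial input $\theta>0$ is supplied by the power-law bounds of the earlier work \cite{hutchcroft2020power}.

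The engine driving the improvement should be a differential-inequality / Aizenman--Newman--style argument exploiting the long edges directly. The key identity is that at $\beta_c$ the expected number of edges leaving the cluster of $0$ that would connect it to a far-away region is, on the one hand, tied to $\beta_c$ times sums of $J$ against the two-point function, and on the other hand is constrained because no infinite cluster exists (Berger's theorem, valid precisely because $0<\alpha<d$). Concretely, one considers the event that $0$ is connected to $\Lambda_r$ and decomposes according to the last long edge used, or uses the BK/tree-graph-type bound to write $\bP_{\beta_c}(0\leftrightarrow x)$ in terms of a convolution $\sum_{u,v} \bP(0\leftrightarrow u)\,\beta_c J(u,v)\,\bP(v\leftrightarrow x)$ plus a "short-range" remainder; since $J(u,v)\ge c\|u-v\|^{-d-\alpha}$, this turns into a genuine fractional-Laplacian-type convolution. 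Summing $x$ over $\Lambda_r$ and using the assumed bound $\phi\le Cr^{-\theta}$ to estimate the convolution gives, after the usual splitting into near/far ranges, an estimate of the form $\phi(r)\lesssim (c\beta_c)^{-1}\big(\text{better power}\big) + (\text{error})$, which is the desired improvement. The constant $c\beta_c$ enters precisely because each long edge carries a factor $\beta_c J \ge c\beta_c\|\cdot\|^{-d-\alpha}$, and the division by $c\beta_c$ in the theorem is the price of paying for one such edge.

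The main obstacle, and where the real work lies, is controlling the convolution/error terms uniformly in $r$ so that the bootstrap genuinely closes rather than losing a logarithm or a constant at each scale. Two issues must be handled with care: (i) the convolution of two functions decaying like $r^{-\theta}$ behaves differently depending on whether $\theta$ is below, equal to, or above the critical value $d-\alpha$, so the induction must be set up with the right ansatz (probably $\phi(r)\le Cr^{-\theta}$ with $\theta$ strictly increasing by a fixed gap at each step, stopping exactly at $d-\alpha$), and the borderline case $\theta=d-\alpha$ — where the convolution produces a logarithmic factor — must be absorbed by a more careful final argument (e.g. working with a slightly different test function, or extracting a small power saving from the discreteness/the strict inequality $\alpha<d$). (ii) One needs a clean way to compare the genuinely multi-scale averaged quantity $\phi(r)$ to the pointwise two-point function appearing inside the convolution; here summing over dyadic annuli and using monotonicity/subadditivity properties of $\phi$ should suffice, but the bookkeeping is delicate. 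Once the bootstrap reaches $\theta=d-\alpha$ with a uniform constant, the final sentence of the theorem ($2-\eta\le\alpha$) is immediate from the definition of $\eta$ together with the fact that the ball-average being $\le r^{-d+\alpha}$ forces $\liminf$ control on the pointwise exponent.
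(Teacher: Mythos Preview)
Your proposal has a genuine gap at its core: the ``self-improving inequality'' you describe does not exist. A BK/tree-graph decomposition indeed yields an upper bound of the shape
\[
\tau(x) \;\le\; \tau_{\mathrm{restr}}(x) \;+\; \sum_{u,v} \tau(u)\,\beta_c J(u,v)\,\tau(x-v),
\]
where $\tau_{\mathrm{restr}}$ is a connection probability through some restricted set of edges. But neither term on the right can be bounded by the hypothesis $\phi(r)\le Cr^{-\theta}$ in a way that produces a \emph{better} exponent. The convolution term $\tau*\beta_c J*\tau$ is, at criticality, of the same or strictly larger order than $\tau$ itself (since $J$ is summable, $J*\tau\asymp \tau$ and then $\tau*\tau$ decays more slowly than $\tau$); plugging in your assumed bound only makes the right-hand side worse, not better. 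And you offer no mechanism whatsoever for controlling the ``short-range remainder'' $\tau_{\mathrm{restr}}$ --- you treat it as an error to be absorbed, but it is in fact the entire difficulty. Recall from the introduction that the restricted-to-a-box bound \eqref{eq:weaker} is already known and is \emph{not} known to imply anything about the full-space two-point function; this is exactly the obstruction your scheme runs into.

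The paper's proof proceeds completely differently and does not bootstrap on $\phi$ at all. It introduces an $L$-adic hierarchical decomposition $\sigma$ of $\Z^d$, splits $J=H_\sigma+R_\sigma$ into a hierarchical piece and a remainder, and defines for each block $B$ a configuration $\eta_B$ that already contains all $R_\sigma$-edges together with the hierarchical edges not crossing the boundary of any ancestor of $B$. The key quantities are the typical maximum cluster size $M_B$ and the restricted susceptibility $\sum_{x,y\in B}\bP_{\beta,\sigma}(x\leftrightarrow y\text{ in }\eta_B)$, controlled via a \emph{runaway observable} argument: if either quantity exceeds its target bound in some ``ancestrally good'' block $B$, then it exceeds the bound by an even larger factor in the parent $\sigma(B)$, and iterating contradicts sharpness of the phase transition for $\beta<\beta_c$. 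The universal tightness theorem of \cite{hutchcroft2020power} (not the power-law bounds you cite) is the essential input making this work. Only after these restricted bounds are established does a BK-type inequality (\cref{lem:fullspace_recursion}) enter, and there it is summed over hierarchical scales with the restricted bounds as input --- not used as a bootstrap on the full two-point function. A further pigeonhole/compactness step is needed to find decompositions $\sigma$ for which enough blocks are good. None of this structure appears in your sketch.
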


\begin{remark}
The proof of this theorem is effective in the sense that it gives an explicit (but fairly large) estimate on the constant $A$. Indeed, we believe that this constant can be taken of the form $C^{(d+\alpha)/(d-\alpha)}$ for a universal constant $C$ of order around $10^{50}$. To simplify the exposition we do not keep careful track of the constants arising in our proofs, which we have not attempted to optimize. It may be possible to get a constant of reasonable order with further work.
\end{remark}

\pgfplotsset{width=0.375\textwidth}
\pgfplotsset{compat=newest}

\begin{figure}
\centering
\begin{tikzpicture}
\begin{axis}[
    axis lines = left,
    xlabel = $\alpha$,
    ylabel = {$2-\eta$},
    ymax=3.5
]
\addplot [
    domain=0:2.0457, 
    samples=100, 
    color=purple,
    semithick
]
{x};
\addplot [
    domain=2.0457:4, 
    samples=100, 
    color=red,
    semithick
]
{2.0457};
\addplot [
    domain=0:3, 
    samples=100, 
    color=orange,
    semithick
    ]
    {1+(2*x)/3};
    \addplot [
    domain=2.0457:3, 
    samples=100, 
    color=blue,
    semithick
    ]
    {x};
\end{axis}
\end{tikzpicture}
    \hspace{1cm}
        \begin{tikzpicture}
\begin{axis}[
    axis lines = left,
    xlabel = $\alpha$,
    ylabel = {$\delta$},
    xmin=0, xmax=4,
    ymin=0, ymax=10
]
\addplot [
    domain=0:1, 
    samples=100, 
    color=red,
    semithick
]
{2};
\addplot [
    domain=1:2.0457, 
    samples=100, 
    color=red,
    semithick
]
{(3+x)/(3-x)};
\addplot [
    domain=2.0457:4, 
    samples=100, 
    color=red,
    semithick
]
{5.2886};
\addplot [
    domain=0:2.5, 
    samples=100, 
    color=orange,
    semithick
    ]
    {(6+x)/(3-x)};
    \addplot [
    domain=0:2.5, 
    samples=100, 
    color=blue,
    semithick
    ]
    {6/(3-x)};
\end{axis}
\end{tikzpicture}
    \caption{(Exponent estimates for $d=3$.) Our new upper bounds (blue) vs.\ the conjectured true values (red) and the upper bounds proven in \cite{hutchcroft2020power} (orange) of $2-\eta$ and $\delta$ when $d=3$. The part of the graph where our upper bound coincides exactly with the conjectured true value is represented in purple.
Here we use the numerical values $\alpha_c(3)=2-\eta_\mathrm{SR}(3)\approx 2.0457$ and $\delta_\mathrm{SR}(3)\approx 5.2886$ obtained by applying the scaling and hyperscaling relations to the numerical estimates on the exponents $\nu$ and $\beta/\nu$ obtained by Wang et al.\ in \cite{wang2013bond}. When $\alpha=2.0457\approx \alpha_c(3)$ our upper bound on $\delta$ is about $6.29$ and exceeds the numerical true value by about $20\%$.}
\label{fig:3d}
\end{figure}
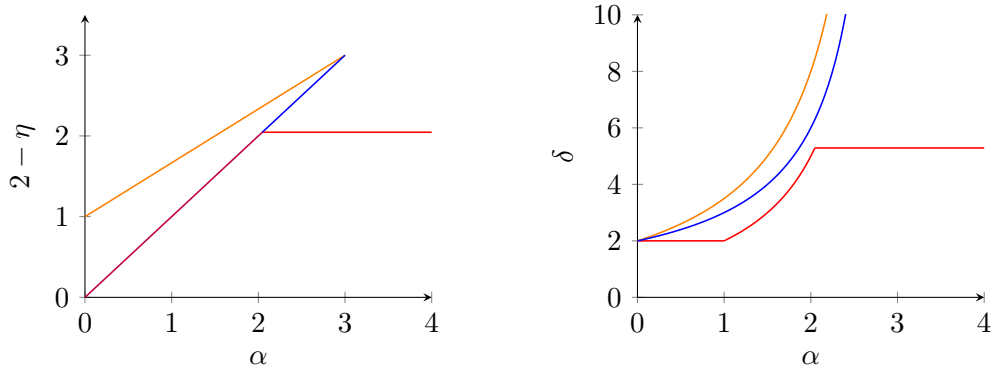

We think of this result as stating that the critical two-point function for long-range percolation on the Euclidean lattice is always dominated on average by the critical two-point function for long-range percolation on the hierarchical lattice. For $d=1$ this inequality is expected to be sharp for all $0<\alpha<1$.\footnote{See \cref{remark:BaumlerBerger}.} Similar bounds have been established (under perturbative criteria) in the high-dimensional case  using the lace expansion \cite{MR3306002,MR4032873,MR2430773}; our results are most interesting when $d\leq 6$ and $\alpha \geq d/3$ so that mean-field critical behaviour is not expected to hold and high-dimensional techniques such as the lace expansion  should not apply. Even in the high-dimensional case, it is notable that we obtain sharp (when $\alpha<d/3$) upper bounds on the two-point function under non-perturbative assumptions, in contrast to lace-expansion based methods.

\begin{remark}
A sharp analysis of the \emph{subcritical} two-point function holding for a very general class of long-range models is given in \cite{MR4248721}.
\end{remark}

\textbf{The tail of the volume.} Applying the methods of \cite{hutchcroft2020power,1808.08940} together with the sharpened control of the two-point function given by \cref{thm:main} yields the following improved power-law bound on the tail of the volume. While this bound is not believed to be sharp, it significantly improves the bound of \cite{hutchcroft2020power} and in fact is rather close to the conjectured true value for $d$ small and $\alpha$ close to $\alpha_c$ as can be seen in \cref{fig:1d,fig:2d,fig:3d}. 

\begin{corollary}
\label{cor:volume}
Let $d\geq 1$, let $J:\Z^d\times \Z^d \to [0,\infty)$ be a symmetric, integrable, translation-invariant kernel, and suppose that there exist constants $0<\alpha<d$ and $c>0$ such that $J(x,y) \geq c\|x-y\|^{-d-\alpha}$ for all distinct $x,y \in \Z^d$. Then there exists a constant $A=A(d,\alpha)$ such that
\[
\bP_{\beta_c}(|K|\geq n) \leq \frac{A}{(c\beta_c)^{d/(2d-\alpha)}} n^{-(d-\alpha)/2d}
\] 
for every $n\geq 1$. In particular, the exponent $\delta$ satisfies $\delta \leq 2d/(d-\alpha)$ if it is well-defined.
\end{corollary}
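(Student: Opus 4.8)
The plan is to deduce \cref{cor:volume} from \cref{thm:main} using the volume-tail machinery of \cite{hutchcroft2020power,1808.08940}. Write $\mathbf{X}_r := \bE_{\beta_c}|K \cap \Lambda_r| = \sum_{x \in \Lambda_r} \bP_{\beta_c}(0 \leftrightarrow x)$ for the susceptibility restricted to the box $\Lambda_r$. Since $|\Lambda_r| \asymp r^d$, \cref{thm:main} is precisely the statement that $\mathbf{X}_r \preceq (c\beta_c)^{-1} r^{\alpha}$ for every $r \geq 1$, with implied constant depending only on $d$ and $\alpha$; everything below uses only this consequence.

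The first step is to reduce the volume tail to an \emph{extrinsic one-arm} estimate. For every $r, n \geq 1$ one has the deterministic inclusion $\{|K| \geq n\} \subseteq \{|K \cap \Lambda_r| \geq n\} \cup \{0 \leftrightarrow \Z^d \setminus \Lambda_r\}$, since if the cluster of the origin is contained in $\Lambda_r$ then $|K \cap \Lambda_r| = |K|$. Markov's inequality therefore gives
\[
\bP_{\beta_c}(|K| \geq n) \;\leq\; \frac{\mathbf{X}_r}{n} + \theta_r, \qquad \text{where} \qquad \theta_r := \bP_{\beta_c}\!\left(0 \leftrightarrow \Z^d \setminus \Lambda_r\right).
\]
Once a bound of the form $\theta_r \preceq (c\beta_c)^{-a} r^{-\rho}$ is available for suitable exponents $a, \rho > 0$, optimising the right-hand side over $r$ (balancing the two terms) produces a power-law upper bound on $\bP_{\beta_c}(|K| \geq n)$; a short computation identifies the exponents $a,\rho$ needed to reach the claimed volume-tail exponent $(d - \alpha)/2d$ (equivalently $\delta \leq 2d/(d - \alpha)$) together with the prefactor $(c\beta_c)^{-d/(2d - \alpha)}$. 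One can alternatively first upgrade the first-moment bound $\bE_{\beta_c}|K \cap \Lambda_r| \leq \mathbf{X}_r$ to exponential-tail control on $|K \cap \Lambda_r|$ via tree-graph/BK-type moment inequalities as in \cite{1808.08940}, which relaxes slightly what is required of the one-arm exponent; either route leads to the stated bound.

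The core of the argument is thus the one-arm estimate for $\theta_r$, which I would derive from \cref{thm:main} by a multiscale recursion in the spirit of \cite{hutchcroft2020power}. If $0 \leftrightarrow \Z^d \setminus \Lambda_{2r}$ then in particular $0 \leftrightarrow \Z^d \setminus \Lambda_r$, and following a connecting path up to the moment it first leaves $\Lambda_r$ exhibits an open edge $\{u, w\}$ with $u \in \Lambda_r$ and $w \notin \Lambda_r$ such that $0$ is connected to $u$ using only edges of $\Lambda_r$, the edge $\{u, w\}$ is open, and $w$ is connected to $\Z^d \setminus \Lambda_{2r}$ — three features occurring disjointly. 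The BK inequality and a union bound over the first-exit edge $\{u, w\}$ then bound $\theta_{2r}$ in terms of the restricted susceptibilities and the values of $\theta$ at comparable scales; iterating this recursion over dyadic scales $r, 2r, 4r, \dots$ and feeding in $\mathbf{X}_r \preceq (c\beta_c)^{-1} r^{\alpha}$ yields the required power-law decay of $\theta_r$.

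I expect this last step to be the principal obstacle, for two related reasons. First, \cref{thm:main} controls the two-point function only \emph{on average over $\Lambda_r$} rather than pointwise, so crude union bounds over the sphere $\partial\Lambda_r$ are unavailable and one must carry the averaged quantities $\mathbf{X}_r$ through the whole recursion. Second, and more seriously, the hypotheses impose only the \emph{lower} bound $J(x,y) \geq c\|x - y\|^{-d - \alpha}$ with no matching upper bound, so the ``long-edge'' contribution $\sum_{\|z\| \geq \ell} \beta_c J(0, z)$ that appears when one sums over the first-exit edge cannot be estimated as a decaying function of $\ell$; it must instead be absorbed into the $\theta$-recursion itself, using the same regularisation/comparison device that handles this difficulty in the proof of \cref{thm:main}. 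Once the one-arm bound is established, the optimisation over $r$ and the bookkeeping of the constant $A = A(d, \alpha)$ and the powers of $c\beta_c$ are routine, and yield \cref{cor:volume}.
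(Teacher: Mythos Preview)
Your approach differs fundamentally from the paper's and has a genuine gap. You propose to reach the volume tail via an extrinsic one-arm bound $\theta_r \preceq r^{-\rho}$, obtained from a BK-type first-exit recursion. You correctly identify the obstacle---no upper bound on $J$ means the long-edge sum $\sum_{w\notin\Lambda_r}\beta_c J(u,w)$ is uncontrolled---but you do not actually overcome it; appealing vaguely to ``the same regularisation/comparison device'' used in the proof of \cref{thm:main} does not help, since that proof nowhere establishes (or needs) a one-arm estimate. Even granting some one-arm bound, you would still need the specific value $\rho=\alpha(d-\alpha)/(d+\alpha)$ to hit the exponent $(d-\alpha)/2d$ after optimising $\mathbf{X}_r/n+\theta_r$, and nothing in your sketch explains why this $\rho$ is attainable.

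The paper bypasses one-arm entirely by using the \emph{two-ghost inequality} of \cite[Theorem~3.1]{hutchcroft2020power}: under a hypothesised bound $\bP_\beta(|K|\ge n)\le An^{-\theta}$ with $\theta=(d-\alpha)/2d$, it controls $\sum_x (e^{\beta J_x}-1)\bP_\beta(\sS'_{x,n})^2$, then combines Cauchy--Schwarz with the inequality $\bP_\beta(|K|\ge n)^2\le \bP_\beta(\sS'_{x,n})+\bP_\beta(0\leftrightarrow x)$ averaged over $x\in\Lambda_r$, the second term being exactly what \cref{thm:main} bounds. Optimising in $r$ yields a self-improving implication $A\mapsto C_0\sqrt{A+1}\,(c\beta)^{-d/(4d-2\alpha)}$, and solving this bootstrap (for $\beta<\beta_c$, then letting $\beta\uparrow\beta_c$) gives the claim. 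The key point is that the two-ghost inequality uses only the \emph{lower} bound on $J$ (through $e^{\beta J_x}-1\ge c\beta\|x\|^{-d-\alpha}$ in the Cauchy--Schwarz step), which is precisely why no upper bound on $J$ is needed---this is what your first-exit recursion cannot replicate.
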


\textbf{About the proof.}
As noted in \cite[Remark 2.11]{hutchcrofthierarchical}, the same proof used to study the hierarchical lattice in that paper applies directly to long-range percolation on $\Z^d$ to establish that
\begin{equation}
\label{eq:weaker}
\frac{1}{|\Lambda_r|}\sum_{x\in  \Lambda_r} \bP_{\beta_c}(0\leftrightarrow x \text{ inside $\Lambda_r$}) \leq A r^{-d+\alpha} 
\end{equation}
for some constant $A$ and every $r\geq 1$. However, the methods used in \cite[Section 2.3]{hutchcrofthierarchical} to pass from connectivity estimates inside a box to global connectivity estimates break down completely in the Euclidean case, and we are not aware of any techniques to pass from estimates of the form \eqref{eq:weaker} to full-space estimates. Indeed, it is a well-known (and not too difficult) folklore theorem that $\frac{1}{|\Lambda_r|}\sum_{x\in  \Lambda_r} \bP_{p_c}(0\leftrightarrow x \text{ inside $\Lambda_r$}) \to 0$ as $r\to\infty$ for critical nearest-neighbour bond percolation in every dimension $d\geq 2$, whereas proving the corresponding full-space estimate would imply the continuity of the phase transition and remains very much open.
To circumvent this problem, we will instead set up the renormalization argument used to prove \eqref{eq:weaker} in a more subtle and technical way, leading to a stronger (and more technical) estimate that can, with work, be used to deduce \cref{thm:main} by an elaboration of the argument used in the hierarchical case.


A key idea powering the proof will be to write the Euclidean kernel $J$ as a sum of a hierarchical kernel $H_\sigma$ and a remainder term $R_\sigma$, both depending on an $L$-adic hierarchical decomposition $\sigma$ of $\Z^d$ that we are free to choose. We will  then consider connectivity using not only those edges lying inside a given box but also many edges outside the box. Indeed, we take as many of these edges as we can without breaking our renormalization argument, which analyzes the effect of adding long edges at each successive $L$-adic scale. Taking more edges in the restricted estimate in this way makes it much easier to pass from a restricted estimate to a full-space estimate. On the other hand, the non-translation-invariant nature of the hierarchical decomposition causes various problems due to the restricted models depending in a complicated way on the entire choice of decomposition, breaking transitivity. This leads to various subtle technical problems that must be circumvented to push the proof through. This is mostly accomplished by careful choice of definitions, leading to many definitions that may seem somewhat unnatural at first but which have been reverse-engineered precisely to make the proof go through cleanly. 


While we have written the paper in a self-contained way, the reader is likely to better appreciate the proof if they are already familiar with that of \cite{hutchcrofthierarchical}.




\section{Proof}

\subsection{The hierarchical decomposition}

\begin{figure}[t]
\centering
\includegraphics[width=0.4\textwidth]{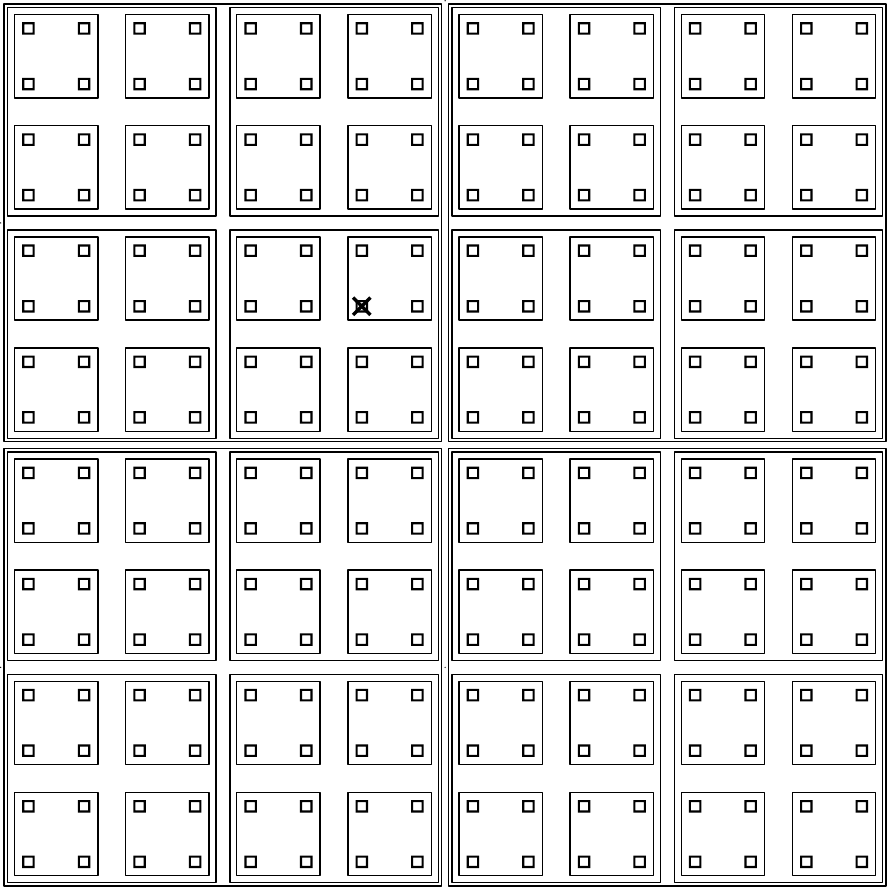}\qquad \qquad 
\includegraphics[width=0.4\textwidth]{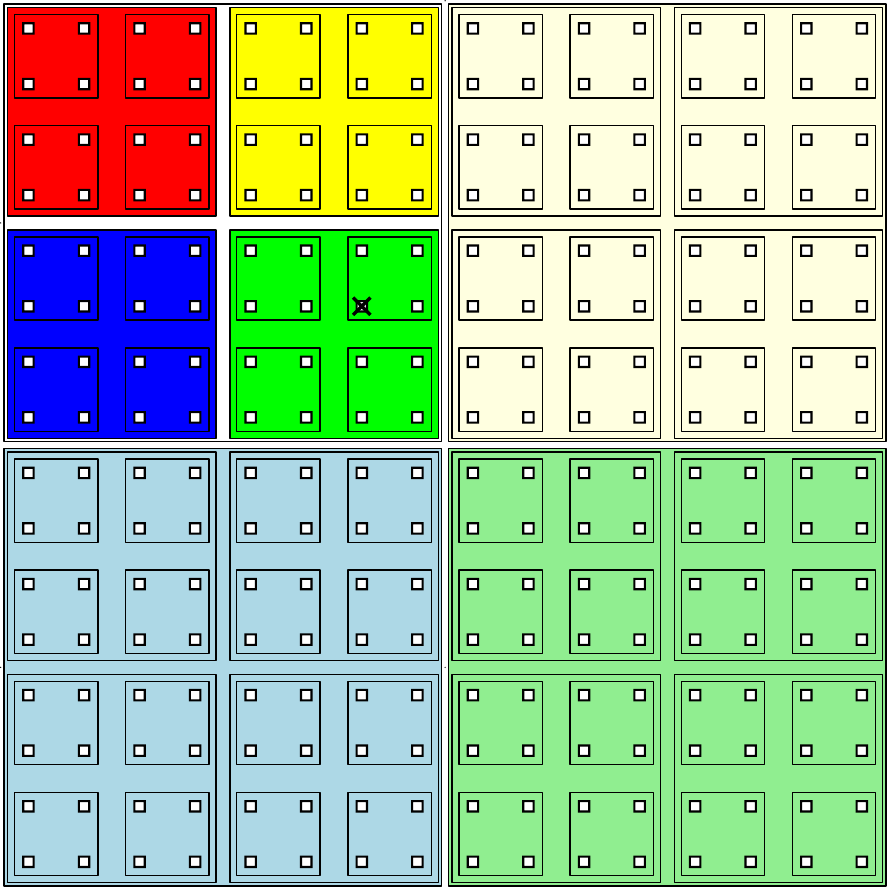}
\caption{Left: The hierarchical decomposition of $\Z^2$ with $L=2$ and $\sigma$ encoded by $((0,0)$,$(1,1)$, $(1,0)$,$(0,1),\ldots)$, where the origin is marked by a cross. Right: The configuration $\eta_{B^\sigma_2}$ is permitted to use only those hierarchical edges whose endpoints are both contained in a box of the same colour.}
\label{fig:decomp}
\end{figure}

\noindent Fix $d\geq 1$ and $L\geq 2$. Each sequence $\sigma = (\sigma_1,\sigma_2,\ldots) \in \Sigma = \Sigma_{d,L}:=(\{0,\ldots,L-1\}^d)^\N$ determines an $L$-adic hierarchical decomposition of $\Z^d$ as follows: For each $n \geq 0$ we define an \textbf{$n$-block} to be a set of the form $\Z^d \cap (\sum_{m=1}^n \sigma_{m,i} L^m+ \prod_{i=1}^d \bigl[k_i L^n,\, (k_i+1) L^n-1])$ for $k_1,\ldots,k_d \in \Z$ and define $\cB_n^\sigma$ to be the partition of $\Z^d$ into $n$-blocks
\[
\cB_n^\sigma = \Biggl\{ \Z^d \cap \Biggl(\sum_{m=1}^n \sigma_{m,i} L^m+\prod_{i=1}^d \Bigl[k_i L^n, (k_i+1) L^n-1  \Bigr]\Biggr) : k=(k_1,\ldots,k_d) \in \Z^d \Biggr\}.
\]
See \cref{fig:decomp} for an illustration.
We reserve the term \emph{block} for $L$-adic boxes of this particular form as determined by $\sigma$, using the term \emph{box} for other sets of the form $[0,r]^d + x$ for $r\geq 0$ and $x\in \Z^d$.
Note that $0$-blocks are simply singleton sets, so that $\cB^\sigma_0=\{\{x\}:x\in \Z^d\}$. We also write $\cB^\sigma=\bigcup_{n\geq 0} \cB^\sigma_n$ for the collection of all blocks, noting again that this collection depends on the choice of $\sigma \in \Sigma$. 
For each $x\in \Z^d$ and $n\geq 0$, we define $B^\sigma_n(x)$ to be the unique $n$-block containing $x$, writing $B_n^\sigma=B^\sigma_n(0)$ for the $n$-block containing the origin. For each $n\geq 1$, every $n$-block decomposes into exactly $L^d$ $(n-1)$-blocks, which we refer to as the \textbf{children} of the block. We refer to the blocks \emph{strictly containing} a given block as the \textbf{ancestors} of that block, and the blocks \emph{strictly contained} in a given block as the \textbf{descendants} of that block. 
Given an $n$-block $B$ for some $n\geq 0$, the unique $(n+1)$-block containing $B$ is called the \textbf{parent} of $B$ and is denoted $\sigma(B)$. We think of $\Sigma$ as the set of $L$-adic hierarchical decompositions of $\Z^d$, and note that for each $\sigma \in \Sigma$ and $x\in \Z^d$ we can consider the translated configuration $\sigma+x\in \Sigma$ which is defined by the property that
\begin{equation}
\label{eq:translation_def}
\cB^{\sigma+x}_n = \{ B + x : B \in \cB^\sigma_n\}  \qquad \text{ for every $n\geq 0$.}
\end{equation}


 Given $\sigma \in \Sigma$ we define the \textbf{hierarchical ultrametric} $d_\sigma$ on $\Z^d$ by
\[
d_\sigma(x,y) = \begin{cases} L^{h_\sigma(x,y)} & h_\sigma(x,y) \geq 1 \\ 
0 & h_\sigma(x,y) = 0 \end{cases} \quad \text{ where } \quad 
\begin{array}{c}\text{$h_\sigma(x,y)$ is minimal such that there exists} \\ 
\text{an $h_\sigma(x,y)$-block containing $x$ and $y$.}
 \end{array}
\]
We stress that all of these definitions depend on the choice of sequence $\sigma$ which encodes the hierarchical $L$-adic decomposition of $\Z^d$. Note that for some (non-generic) choices of $\sigma$, such as the all-zero sequence, there exist vertices with $d_\sigma(x,y)=h_\sigma(x,y)=\infty$; this will not cause us any problems.
It is easily verified that $d_\sigma$ does indeed define an (extended) ultrametric, and in particular satisfies the ultrametric triangle inequality $d_\sigma(x,z) \leq \max\{d_\sigma(x,y),d_\sigma(y,z)\}$ for every $x,y,z\in \Z^d$. (In fact for generic choices of $\sigma$ the metric space $(\Z^d,d_\sigma)$ is isometric to the hierarchical lattice $\bbH^d_L$, but with the collection of maps realising this isometry depending on $\sigma$.) Note also that $d_\sigma(x,y) \geq \|x-y\|$ for every $x,y\in \Z^d$ and $\sigma\in \Sigma$.


Fix a symmetric, integrable, translation-invariant kernel $J:\Z^d\times \Z^d \to[0,\infty)$ and suppose that there exist constants $c>0$ and $0<\alpha<d$ such that $J(x,y)\geq c\|x-y\|^{-d-\alpha}$ for every pair of distinct points $x,y\in \Z^d$. We will consider $d$, $L$, $J$, $\alpha$, and $c$ to be fixed for the remainder of the paper, and will later need to assume that $L$ is larger than some constant $L_0=L_0(d,\alpha)$. For each $\sigma \in \Sigma$ and $x,y\in \Z^d$ we define the symmetric kernels $H_\sigma$ and $R_\sigma$ by
\[
H_\sigma(x,y) = \begin{cases} c d_\sigma(x,y)^{-d-\alpha} & x\neq y\\
0 & x=y \end{cases} \qquad \text{ and } \qquad R_\sigma(x,y) = J(x,y) - H_\sigma(x,y)
\]
so that $J(x,y)=H_\sigma(x,y)+R_\sigma(x,y)$ for every pair of distinct points $x,y\in \Z^d$ and $\sigma \in \Sigma$. The non-negativity of $R_\sigma$ is ensured since $J(x,y)\geq c\|x-y\|^{-d-\alpha} \geq cd_\sigma(x,y)^{-d-\alpha}$ for every pair of distinct points $x,y\in \Z^d$. We think of this as a decomposition of the Euclidean kernel $J$ into a hierarchical term $H_\sigma$ and a remainder term $R_\sigma$, noting that $R_\sigma(x,y)\gg H_\sigma(x,y)$ for $x$ and $y$ on opposite sides of the boundary of a large block.
Given $\sigma\in \Sigma$, $n \geq 1$, and an $n$-block $B \in \cB^\sigma_n$ we also define the kernel
\[
H_{B}(x,y):=  cL^{-(d+\alpha)n} \mathbbm{1}(x,y\in B \text{ and } h_\sigma(x,y)=n)
\]
for each $x,y\in \Z^d$, so that
$H_\sigma (x,y) = \sum_{n\geq 1} \sum_{B\in \cB^\sigma_n}H_{B}(x,y)$ for each $x,y\in \Z^d$. Note that the kernels $H_\sigma$ and $R_\sigma$ depend heavily on the choice of $\sigma \in \Sigma$ and are \emph{not} translation-invariant for fixed $\sigma$. They are, however, \emph{translation-covariant} in the sense that
\begin{equation}
H_{\sigma+x} (a+x,b+x) = H_\sigma(a,b) \quad \text{ and } \quad R_{\sigma+x} (a+x,b+x) = R_\sigma(a,b)
\end{equation}
for every $x,a,b\in \Z^d$ and $\sigma\in \Sigma$.



Fix $\beta \geq 0$ and $\sigma \in \Sigma$. Let $\omega_R$ be long-range Bernoulli percolation on $\Z^d$ with kernel $R_\sigma$ and parameter $\beta$ and for each non-singleton block $B$ let $\omega_B$ be a long-range Bernoulli percolation configuration on $\Z^d$ with kernel $H_{B}$ and parameter $\beta$, where the $\omega_B$ are all independent of each other and of $\omega_R$. The union $\omega$ of the configuration $\omega_R$ with all of the configurations $\omega_B$ is equal in distribution to long-range Bernoulli percolation on $\Z^d$ with kernel $J$, and we write $\bP_{\beta,\sigma}$ for the joint law of $\omega_R$ and $((\omega_B)_{B\in \cB^\sigma_{n}})_{n\geq 1}$.


As discussed above, we will want to define something that plays the role of `the cluster inside a block' but where we will also want to include as many edges \emph{outside} the block as possible without breaking the proof strategy of \cite{hutchcrofthierarchical}. These considerations lead to the following definition: For each block $B\in \cB^\sigma$ we define
\[
\eta_B := \omega_R \; \cup 
\left(\bigcup_{m=1}^\infty \bigcup \left\{ \omega_{B'} : B'\in \cB^\sigma_m \text{ is not an ancestor of $B$} \right\} \right),
\]
where $\bigcup \{A_i:i\in I\}:=\bigcup_{i\in I}A_i$, 
which is distributed as long-range Bernoulli percolation with kernel
\[
J_{\sigma,B}(x,y):=R_\sigma(x,y) + \sum_{m=1}^\infty \sum_{B'\in \cB^\sigma_m} H_{B'}(x,y)\mathbbm{1}(B' \text{ not an ancestor of $B$}).
\]
In other words, $\eta_B$ includes all edges associated to the remainder kernel and all edges associated to the part of the hierarchical kernel corresponding to blocks that are either contained in $B$ or are disjoint from $B$ (i.e., those hierarchical edges whose endpoints have the same colour in \cref{fig:decomp}). In particular, $\eta_B=\eta_{B'}$ when $B$ and $B'$ are siblings (i.e., have the same parent) and
\begin{equation}\eta_{\sigma(B)} =\eta_B \cup \omega_{\sigma(B)}
\label{eq:eta_recursion}\end{equation}
for every block $B$, where we recall that $\sigma(B)$ denotes the parent of $B$. This fact will be the basis of all our renormalization arguments. Note also that $J_{\sigma,B}$ transforms covariantly under translation in the sense that
\begin{equation}
\label{eq:translation_covariance}
J_{\sigma+x,B+x}(a+x,b+x) = J_{\sigma,B}(a,b)
\end{equation}
for every $a,b,x\in \Z^d$, $\sigma \in \Sigma$ and $B \in \cB^\sigma$, so that the law of $\eta_B$ enjoys a similar translation-covariance property.

\subsection{The maximum cluster size}

Given $\sigma \in \Sigma$ and a block $B\in \cB_n^\sigma$ for some $n\geq 0$, we write 
\[
|K^{\max}_B| = \max\bigl\{|K\cap B| : K \text{ is a cluster of $\eta_B$}\bigr\}.
\]
(This is a slight abuse of notation since $K^{\max}_B$ is not well-defined \emph{as a set} in the case of a tie. This will not cause any problems.) We stress that although we consider the intersections of clusters of $\eta_B$ with $B$, these clusters need not be contained in $B$, and may contain both arbitrary edges from $\omega_R$ and edges from $\omega_{B'}$ for any block $B'$ that is not an ancestor of $B$ (including e.g.\ small blocks very far away from $B$).
We  define the \emph{typical value} of $|K^{\max}_B|$ to be
\[
M_B=M_{B,\beta,\sigma}:= \min\Bigl\{m \geq 1 : \bP_{\beta,\sigma}\left(|K^\mathrm{max}_B| \geq m\right) \leq \frac{1}{e}\Bigr\},\]
noting that we always have $M_B \geq 2$.
As in \cite{hutchcrofthierarchical}, our analysis will rely crucially on the universal tightness theorem of \cite[Theorem 2.2]{hutchcroft2020power}, which implies that $|K^{\max}_B|$ is always of the same order as its typical value $M_B$ with high probability. This theorem, which applies to percolation on arbitrary weighted graphs and hence to long-range percolation on $\Z^d$ with an arbitrary symmetric kernel,  yields in our context that the inequalities
\begin{equation}
\bP_{\beta,\sigma}\Bigl(|K_B^\mathrm{max}| \geq \lambda M_B\Bigr) \leq \exp\left(-\frac{1}{9}\lambda \right)
\label{eq:BigClusterUnrooted}
\qquad \text{and} \qquad \bP_{\beta,\sigma}\Bigl(|K_B^\mathrm{max}| < \eps M_B \Bigr) \leq 27 \eps 
\end{equation}
hold for every $\sigma \in \Sigma$, $n \geq 0$, $B\in \cB_n^\sigma$, $\lambda \geq 1$ and $0<\eps \leq 1$. 
Moreover, if we write $K_B(x)$ for the cluster of $x$ in $\eta_{B}$ for each $x\in \Z^d$ then we also have that
\begin{equation}
\label{eq:BigClusterRooted}
\bP_{\beta,\sigma}\Bigl(|K_B(x) \cap B| \geq \lambda M_B\Bigr) \leq  \bP_{\beta,\sigma}\Bigl(|K_B(x) \cap B| \geq  M_B\Bigr) \exp\left(1-\frac{1}{9}\lambda \right)
\end{equation}
for every $\lambda \geq 1$. These inequalities make upper bounds on $M_B$ (which is a kind of median) very useful for the establishment of upper bounds on \emph{moments} of related quantities; this plays a very important technical role in several of our proofs. It follows in particular from \eqref{eq:BigClusterUnrooted} that the mean of $|K_B^{\max}|$ is always of the same order as its typical value in the sense that
\begin{equation}
\frac{M_B}{2e} \leq \frac{M_B-1}{e}\leq \bE_{\beta,\sigma}|K_B^{\max}| \leq  \left(1+\int_1^\infty e^{-\lambda/9}\dif \lambda \right) M_B \leq 10 M_B
\label{eq:mean_and_median}
\end{equation}
for every $\sigma\in \Sigma$, every $\beta\geq 0$, and every block $B \in \cB^\sigma$.


\medskip

As in \cite{hutchcrofthierarchical}, our first goal will be to establish an upper bound on $M_B$ for $\beta<\beta_c$ using what we call a \emph{runaway observable argument}. That is, we will show that if $M_B$ is much larger than we believe it should be for some block $B$ then $M_{\sigma(B)}$ is larger than it should be by an even larger factor, so that, inductively, the quantities associated to the ancestors of $B$ blow up rapidly as we pass through the generations. This rapid growth will contradict the \emph{sharpness of the phase transition}, which states in particular that the expected size of the cluster of the origin is finite for $\beta < \beta_c$ \cite{aizenman1987sharpness,duminil2015new,1901.10363}, so that in fact the anticipated bound on $M_B$ can never be exceeded.

\medskip

Two problems arise immediately when adapting the arguments of \cite{hutchcrofthierarchical} to our new setting: First, the non-transitivity of the hierarchical decomposition means that different blocks of the same size may have different values of $M_B$. Second, since $|K_B^{\max}|$ may depend on edges that are not contained in $B$, the random variables $|K_{B_1}^{\max}|$ and $|K_{B_2}^{\max}|$ need not be independent for two disjoint blocks of the same size. We will see that the second issue can be circumvented fairly easily by an additional application of the universal tightness theorem, while the first is more serious.

\medskip

To deal with the problem of non-transitivity, we will bound $M_B$ not for arbitrary blocks, but only for those blocks that are \emph{ancestrally good}, a notion we now define. Given $\sigma \in \Sigma$, we say that two blocks are \textbf{siblings} if they share a parent, and say that a block $B$ is \textbf{good} if
\begin{align}
&\bE_{\beta,\sigma}|K_{B}^{\max}| \leq  \bE_{\beta,\sigma}|K_{B'}^{\max}|  &\hspace{-1.5cm}\text{ for at least $\left\lfloor \frac{1}{2} L^d\right\rfloor-1$ siblings $B'$ of $B$ and}\label{eq:good_def_1}\\
&\sum_{x,y\in B} \bP_{\beta,\sigma}(x \leftrightarrow y \text{ in $\eta_B$}) \leq  \sum_{x,y\in B'} \bP_{\beta,\sigma}(x \leftrightarrow y \text{ in $\eta_{B'}$})  &\text{ for at least one sibling $B'$ of $B$}.\label{eq:good_def_2}
\end{align}
Note that the two sets of siblings required by these two conditions need not be the same.

\begin{lemma}
\label{lem:good_children}
Every non-singleton block has at least $\frac{1}{2}L^d$ children that are good, and in particular has at least one such child.
\end{lemma}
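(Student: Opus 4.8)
The plan is to prove this by pure counting --- no probabilistic input is needed beyond the fact that $\bE_{\beta,\sigma}|K_{B'}^{\max}|$ and $\sum_{x,y\in B'}\bP_{\beta,\sigma}(x\leftrightarrow y \text{ in }\eta_{B'})$ are well-defined real numbers attached to each block. Fix a non-singleton block $B$ with parent $P = \sigma(B)$, and consider the collection $\mathcal{C}$ of the $L^d$ children of $P$ (which includes $B$). Both goodness conditions \eqref{eq:good_def_1} and \eqref{eq:good_def_2} say that a child is ``not too extreme'' relative to its siblings with respect to some real-valued statistic, so each fails for only a controlled number of children, and a union bound finishes the job.

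First I would handle \eqref{eq:good_def_1}: list the children $C_1,\dots,C_{L^d}$ of $P$ in an order making $\bE_{\beta,\sigma}|K_{C_i}^{\max}|$ nondecreasing in $i$ (ties broken arbitrarily). For each $i$ the children $C_{i+1},\dots,C_{L^d}$ are $L^d-i$ siblings of $C_i$ whose expected maximal cluster size is at least that of $C_i$, so $C_i$ satisfies \eqref{eq:good_def_1} whenever $L^d - i \ge \lfloor\tfrac12 L^d\rfloor - 1$, i.e.\ whenever $i \le \lceil \tfrac12 L^d\rceil + 1$ (using $\lfloor\tfrac12 L^d\rfloor + \lceil\tfrac12 L^d\rceil = L^d$). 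Hence at least $\lceil\tfrac12 L^d\rceil + 1$ of the children satisfy \eqref{eq:good_def_1}.

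Next I would handle \eqref{eq:good_def_2}: writing $S(C) := \sum_{x,y\in C}\bP_{\beta,\sigma}(x\leftrightarrow y\text{ in }\eta_C)$, a child $C$ violates \eqref{eq:good_def_2} exactly when $S(C) > S(C')$ for every sibling $C'$ of $C$, i.e.\ when $C$ is the unique strict maximizer of $S$ over $\mathcal{C}$; at most one child can be such a maximizer, so at least $L^d - 1$ children satisfy \eqref{eq:good_def_2}. Combining the two counts by inclusion--exclusion, at least $(\lceil\tfrac12 L^d\rceil + 1) + (L^d - 1) - L^d = \lceil\tfrac12 L^d\rceil \ge \tfrac12 L^d$ children satisfy both conditions, hence are good; and since $\tfrac12 L^d \ge 1$ (as $L \ge 2$), at least one good child exists.

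I do not expect a real obstacle here; the only care needed is the floor/ceiling bookkeeping in \eqref{eq:good_def_1} (one checks $\lceil\tfrac12 L^d\rceil + 1 \le L^d$ for $L^d \ge 2$, which always holds since $L \ge 2$, so the claimed count of valid indices is not truncated) and the convention that a block is not counted among its own siblings, which is what makes the strict maximizer of $S$ genuinely unique and the count for \eqref{eq:good_def_2} valid.
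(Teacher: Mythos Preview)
Your proof is correct and follows essentially the same counting argument as the paper --- the paper phrases it via the complement (at most $\lfloor\tfrac12 L^d\rfloor - 1$ children fail \eqref{eq:good_def_1} and at most one fails \eqref{eq:good_def_2}, so at most $\lfloor\tfrac12 L^d\rfloor$ fail to be good), which is exactly your inclusion--exclusion seen from the other side. One cosmetic slip: you fix a non-singleton block $B$ and then analyze the children of its \emph{parent} $P=\sigma(B)$, thereby proving the lemma for $P$ rather than for $B$; just let the given non-singleton block itself play the role of the parent and study its $L^d$ children directly.
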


\begin{proof}[Proof of \cref{lem:good_children}]
Every non-singleton block has $L^d$ children. Of these children, at most one does not satisfy \eqref{eq:good_def_2}, and at most $\lfloor \frac{1}{2} L^d \rfloor-1$ do not satisfy \eqref{eq:good_def_1}. As such, the total number of children that are \emph{not} good is at most $\lfloor \frac{1}{2} L^d \rfloor$. This is equivalent to the claim.
\end{proof}


We say that a block is \textbf{ancestrally good} if it is good and all of its ancestors are good (in which case all of its ancestors are ancestrally good). Note that (ancestral) goodness of a block may depend both on $\sigma$ and the parameter $\beta$.

\medskip

The first basic fact we will need is that we can choose $\sigma$ so that ancestrally good blocks exist. Note that the choice of $\sigma$ may depend on $\beta$.

\begin{prop}
\label{lem:ancestrally_minimal}
For each $0\leq \beta<\beta_c$ there exists $\sigma \in \Sigma$ such that the singleton block $\{0\}$ is ancestrally good.
\end{prop}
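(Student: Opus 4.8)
The plan is to build $\sigma$ one coordinate-digit at a time, choosing $\sigma_{n}$ at stage $n$ so that the $n$-block containing the origin remains good, using \cref{lem:good_children} to guarantee that a good choice always exists. The subtlety, and the reason this requires a proposition rather than a one-line argument, is that goodness of a block $B$ depends on the entire future of $\sigma$ (through $\eta_B$, which uses edges from descendant blocks at all scales and from $\omega_R$, itself depending on $R_\sigma = J - H_\sigma$ and hence on all of $\sigma$): when we fix $\sigma_1,\dots,\sigma_n$ we have \emph{not} determined whether $B^\sigma_n$ is good. So the naive greedy construction does not obviously terminate, and one must instead run a compactness / diagonal argument.

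Here is the structure I would use. First, observe that $\Sigma = (\{0,\dots,L-1\}^d)^{\N}$ is a compact metrizable space in the product topology, and that for fixed $\beta < \beta_c$ and each block-shape the events "$B^\sigma_n$ is good" depend on $\sigma$ in a way that interacts well with this topology — more precisely, I would show that the relevant quantities $\bE_{\beta,\sigma}|K^{\max}_{B^\sigma_n(k)}|$ and $\sum_{x,y}\bP_{\beta,\sigma}(x\leftrightarrow y\text{ in }\eta_{B^\sigma_n(k)})$ are finite (this is where sharpness of the phase transition for $\beta<\beta_c$ enters: the expected cluster size is finite, so these block sums are finite) and depend continuously, or at least measurably and "locally", on $\sigma$. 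Then, for each $N$, apply \cref{lem:good_children} inductively down the chain $\{0\} = B^\sigma_0 \subset B^\sigma_1 \subset \cdots \subset B^\sigma_N$: given any $\sigma$, the $N$-block $B^\sigma_N$ has a good child, that child has a good child, and so on, but this does not directly produce a single $\sigma$ making the origin's chain good. Instead I would argue as follows: for each $N$ choose, using \cref{lem:good_children} repeatedly, a sequence $\sigma_1^{(N)},\dots,\sigma_N^{(N)}$ of digits such that, \emph{relative to the decomposition truncated at level $N$} (equivalently, for any completion of these digits — one checks goodness of the blocks $B_0 \subset \cdots \subset B_{N-1}$ only involves comparisons among siblings, which are determined once the parent's position and $\sigma$ up to that level plus the finitely many relevant clusters are pinned down), the blocks $B_0,\dots,B_{N-1}$ are all good. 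The point is to set this up so that goodness of $B_n$ for $n < N$ is insensitive to $\sigma_m$ for $m$ large, modulo an error that vanishes; then pass to a subsequential limit $\sigma = \lim_{N}\sigma^{(N)}$ along which all $\sigma_m$ stabilize, and verify that $\{0\}=B^\sigma_0$ and all its ancestors $B^\sigma_n$ are good for this limiting $\sigma$.

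The main obstacle is precisely the non-locality of the definition of goodness: $\eta_B$ depends on $\omega_R$ (all scales) and on $\omega_{B'}$ for every block $B'$ not an ancestor of $B$, so changing a far-away or far-in-the-future digit of $\sigma$ can in principle change whether $B$ is good. I expect the resolution to be a combination of (i) a continuity/monotonicity estimate showing that the tail contributions to $\bE_{\beta,\sigma}|K^{\max}_{B_n}|$ and to the connection sums coming from very large scales are uniformly small for $\beta<\beta_c$ (again leaning on finiteness of the susceptibility and on \eqref{eq:BigClusterUnrooted}–\eqref{eq:mean_and_median} to control means by medians), so that goodness at level $n$ is a "finite-range" condition up to arbitrarily small perturbation, and (ii) a careful bookkeeping that the comparisons in \eqref{eq:good_def_1}–\eqref{eq:good_def_2} are between $B_n$ and its \emph{siblings}, all of which share the same ancestry and the same $\omega_R$, so many of the potentially-problematic dependencies cancel in the comparison. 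Once that is in place, the compactness argument closes in a routine way: extract a convergent subsequence of $(\sigma^{(N)})$, note that the limit has the property that every initial segment was chosen to make the corresponding origin-block good (up to vanishing error), and conclude that in the limit $\{0\}$ is ancestrally good. I would expect the write-up to spend most of its length on step (i).
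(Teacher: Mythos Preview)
Your high-level strategy---compactness of $\Sigma$ plus a continuity lemma saying that ``$B^\sigma_n$ is good'' is a closed condition in $\sigma$ when $\beta<\beta_c$---is exactly the paper's strategy, and your step (i) is exactly the content of the paper's auxiliary lemma (\cref{lem:weak_continuity}). Where you diverge is in the construction of the approximating sequence $\sigma^{(N)}$. You propose to build $\sigma$ digit by digit, but as you yourself note, goodness of $B^\sigma_n$ depends on the entire tail of $\sigma$, so \cref{lem:good_children} cannot be applied until a \emph{full} $\sigma$ is specified; there is no well-posed ``goodness relative to a truncated decomposition'' to be greedy about, and your step (ii) about sibling cancellations does not resolve this (the siblings share the same $\eta_B$, but $\eta_B$ itself depends on all of $\sigma$, so the comparison values genuinely move as $\sigma$ varies).

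The paper sidesteps this chicken-and-egg problem with a translation trick: fix an \emph{arbitrary} full $\sigma_0\in\Sigma$ once and for all. In this fixed decomposition, goodness is perfectly well-defined, so \cref{lem:good_children} applies directly and a greedy descent produces, for each $n$, a vertex $x_n$ such that $B^{\sigma_0}_m(x_n)$ is good for all $0\le m\le n$. Now use translation-invariance of $J$ (and translation-covariance of the construction) to set $\sigma_n:=\sigma_0-x_n$, so that the origin inherits this good chain in $\sigma_n$. Finally take a subsequential limit of the $\sigma_n$ and invoke the continuity lemma. So the missing idea is: do the greedy search in a single fixed decomposition and then translate, rather than trying to build the decomposition greedily.
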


Before proving this proposition, we first prove the following auxiliary continuity lemma.

\begin{lemma}
\label{lem:weak_continuity}
For each $0 \leq \beta < \beta_c$ and $n\geq 0 $, the 
set of $\sigma\in \Sigma$ such that $B^\sigma_n$ is good is closed in the product topology.
\end{lemma}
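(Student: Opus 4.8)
Here is how I would prove \cref{lem:weak_continuity}.

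The plan is to prove that the property ``$B^\sigma_n$ is good'' is stable under limits of sequences in $\Sigma$; since $\Sigma=(\{0,\dots,L-1\}^d)^{\N}$ is compact and metrizable, this is equivalent to closedness in the product topology. So fix $0\le\beta<\beta_c$ and $n\ge 0$, let $\sigma^{(k)}\to\sigma$ in $\Sigma$ with $B^{\sigma^{(k)}}_n$ good for every $k$, and aim to show that $B^\sigma_n$ is good. Since $\cB^\tau_m$ depends only on $\tau_1,\dots,\tau_m$, after discarding finitely many terms of the sequence we may assume that $\cB^{\sigma^{(k)}}_m=\cB^\sigma_m$ for all $m\le n+1$ and all $k$; then $B:=B^{\sigma^{(k)}}_n=B^\sigma_n$, its parent, and its finite set of siblings are the same objects for every $k$. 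Recalling that $\eta_{B'}=\eta_{B}$ whenever $B'$ is a sibling of $B$, all of the quantities $\bE_{\beta,\sigma}|K^{\max}_{B'}|$ and $\sum_{x,y\in B'}\bP_{\beta,\sigma}(x\leftrightarrow y\text{ in }\eta_{B'})$ entering \eqref{eq:good_def_1}--\eqref{eq:good_def_2} (for $B$ or a sibling $B'$) are fixed functionals of the single configuration $\eta_B$, whose law under $\bP_{\beta,\sigma^{(k)}}$ is long-range percolation with kernel $J_{\sigma^{(k)},B}$.

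The analytic heart of the argument is the continuity of these functionals. First I would record that for every fixed pair of distinct $x,y\in\Z^d$ and every $\tau\in\Sigma$ one has $J(x,y)-H_\tau(x,y)=R_\tau(x,y)\le J_{\tau,B}(x,y)\le J(x,y)$ and $H_\tau(x,y)=c\,d_\tau(x,y)^{-d-\alpha}\le cL^{-(d+\alpha)N}$ whenever $h_\tau(x,y)\ge N$. Now fix $\varepsilon>0$ and $N\ge n$ with $cL^{-(d+\alpha)N}<\varepsilon$. For $k$ large enough that $\sigma^{(k)}$ and $\sigma$ agree on their first $N$ coordinates: if $h_\sigma(x,y)\le N$ then $h_{\sigma^{(k)}}(x,y)=h_\sigma(x,y)$, the $h_\sigma(x,y)$-block of $x$ is the same under $\sigma^{(k)}$ and $\sigma$, and its ancestry relative to $B$ is unchanged, so $J_{\sigma^{(k)},B}(x,y)=J_{\sigma,B}(x,y)$ exactly; if instead $h_\sigma(x,y)>N$ (possibly $\infty$) then $h_{\sigma^{(k)}}(x,y)>N$ as well, so $H_\sigma(x,y),H_{\sigma^{(k)}}(x,y)\le cL^{-(d+\alpha)N}$ and both $J_{\sigma^{(k)},B}(x,y)$ and $J_{\sigma,B}(x,y)$ lie in $[\,J(x,y)-cL^{-(d+\alpha)N},\,J(x,y)\,]$. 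Hence $J_{\sigma^{(k)},B}(x,y)\to J_{\sigma,B}(x,y)$ for each fixed $x\neq y$, while $0\le J_{\sigma^{(k)},B}\le J$ for all $k$.

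Next I would upgrade pointwise convergence of the kernels to convergence of $\bP_{\beta,\sigma^{(k)}}(A)$ for every event $A$ that is a finite Boolean combination of connection events $\{u\leftrightarrow v\text{ in }\eta_B\}$; this class contains $\{x\leftrightarrow y\text{ in }\eta_B\}$ and, via $\{|K^{\max}_{B'}|\ge m\}=\bigcup_{S\subseteq B',\,|S|=m}\bigcap_{u,v\in S}\{u\leftrightarrow v\text{ in }\eta_B\}$, also the events driving \eqref{eq:good_def_1}. Writing $A^{(r)}$ for the same combination with ``connected in $\eta_B$'' replaced by ``connected using only edges with both endpoints in $\Lambda_r$'', one has $A^{(r)}\uparrow A$ as $r\to\infty$ (finite Boolean combinations commute with this increasing limit), $A^{(r)}$ depends on finitely many edges so that $\bP_{\beta,\sigma^{(k)}}(A^{(r)})\to\bP_{\beta,\sigma}(A^{(r)})$ as $k\to\infty$ by pointwise convergence of the edge parameters $1-\exp(-\beta J_{\sigma^{(k)},B}(u,v))$, and $A\setminus A^{(r)}$ is contained in the finite union $\bigcup_w\{w\leftrightarrow\Lambda_r^c\text{ in }\eta_B\}$ over the vertices $w$ appearing in $A$. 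By the domination $J_{\sigma^{(k)},B}\le J$ the last probability is at most $\bP_{\beta,J}(w\leftrightarrow\Lambda_r^c)$, which tends to $0$ as $r\to\infty$ because $\beta<\beta_c$ forces every cluster to be finite almost surely; crucially this tail bound is uniform in $k$. A standard approximation (split $\bP_{\beta,\sigma^{(k)}}(A)-\bP_{\beta,\sigma}(A)$ through $A^{(r)}$, choose $r$ large, then let $k\to\infty$) then gives $\bP_{\beta,\sigma^{(k)}}(A)\to\bP_{\beta,\sigma}(A)$. Summing over $m\le|B'|$ (a finite sum, since $|K^{\max}_{B'}|\le|B'|$) yields $\bE_{\beta,\sigma^{(k)}}|K^{\max}_{B'}|\to\bE_{\beta,\sigma}|K^{\max}_{B'}|$, and summing over $x,y\in B'$ handles the connectivity sums in \eqref{eq:good_def_2}, for $B$ and each of its siblings.

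Finally I would pass \eqref{eq:good_def_1} and \eqref{eq:good_def_2} to the limit by pigeonhole over the finite sibling set: for each $k$ some subset of siblings of size at least $\lfloor\frac12 L^d\rfloor-1$ witnesses \eqref{eq:good_def_1} and some single sibling witnesses \eqref{eq:good_def_2}, so by finiteness a fixed such subset, respectively a fixed sibling, works along a common subsequence; the convergences established above then let me take $k\to\infty$ in the corresponding inequalities to conclude that $B^\sigma_n$ satisfies \eqref{eq:good_def_1} and \eqref{eq:good_def_2}, i.e.\ is good. The step I expect to be the main obstacle is the continuity upgrade — specifically securing the uniform-in-$k$ control of the ``escape to infinity'' tail $A\setminus A^{(r)}$, which is exactly where subcriticality ($\beta<\beta_c$) and the domination $J_{\sigma^{(k)},B}\le J$ are used; the combinatorial bookkeeping of which block data depend on which coordinates of $\sigma$ is routine but must be carried out with care.
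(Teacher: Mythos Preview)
Your proof is correct and shares the paper's overall reduction to continuity of the functionals $\bE_{\beta,\sigma}|K^{\max}_{B'}|$ and $\sum_{x,y\in B'}\bP_{\beta,\sigma}(x\leftrightarrow y\text{ in }\eta_{B'})$ in $\sigma$, but establishes that continuity by a genuinely different mechanism. The paper constructs an explicit edge-by-edge coupling of $\eta_B$ under two decompositions $\sigma,\tau$ agreeing on their first $N$ coordinates, proves the quantitative summability estimate $\sum_y|J_{\sigma,B}(x,y)-J_{\tau,B}(x,y)|\preceq L^{-\alpha N}$, and then runs a sequential cluster exploration to bound the probability the coupled clusters of $x$ differ by $\bP_\beta(|K_x|\ge k)+O(\beta L^{-\alpha N}k)$, invoking subcriticality through $\bP_\beta(|K_x|\ge k)\to 0$. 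You instead exploit the simple uniform domination $J_{\sigma^{(k)},B}\le J$ to stochastically compare $\eta_B$ with full percolation at the same $\beta$, truncate the relevant connection events to $\Lambda_r$ (where they become cylinder events and converge by pointwise kernel convergence), and control the truncation error via $\bP_{\beta,J}(w\leftrightarrow\Lambda_r^c)\to 0$ uniformly in $k$. Your route is more elementary---no coupling, no exploration, no summability estimate---while the paper's coupling yields a more quantitative statement. One minor imprecision worth flagging: your parenthetical ``finite Boolean combinations commute with this increasing limit'' is not literally true for complements, but since the events you actually need are all increasing (finite unions and intersections of connection events), the monotone limit $A^{(r)}\uparrow A$ and the containment $A\setminus A^{(r)}\subseteq\bigcup_w\{w\leftrightarrow\Lambda_r^c\}$ are valid exactly as you use them.
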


\begin{proof}[Proof of \cref{lem:weak_continuity}]
Fix $0 \leq \beta < \beta_c$ and $n\geq 0 $. First note that
if that $\sigma,\tau\in \Sigma$ and $N \geq n \geq 1$ are such that $\sigma_i=\tau_i$ for every $i\leq N$ then the sets of $m$-blocks $\cB^\sigma_m$ and $\cB^\tau_m$ coincide for every $0\leq m \leq N$, so that $B^\sigma_n=B^\tau_n$. 
 For each set $B$ that is an $n$-block of some $\sigma \in \Sigma$, let $\Sigma_B$ be the set of $\sigma$ for which $B\in \cB^\sigma_n$, so that any two elements of $\Sigma_B$ agree in their first $n$ coordinates. Since the block $B^\sigma_n$ and its set of siblings depend continuously on $\sigma$ and the conditions defining a good block are closed, it suffices to prove that for each such block $B$ and point $x\in B$ the expectations $\bE_{\beta,\sigma}|K_{B}^{\max}|$ and $\sum_{y\in B}sa\bP_{\beta,\sigma}(x \leftrightarrow y \text{ in $\eta_B$})=\bE_{\beta,\sigma}|K_{B}(x) \cap B|$ depend continuously on $\sigma \in \Sigma_B$. By bounded convergence, it suffices to prove that the distribution of $K_B(x)$ depends continuously on $\sigma \in \Sigma_B$ for every $x\in \Z^d$.

Fix such a $B$ and suppose that $\sigma,\tau\in \Sigma_B$ and $N \geq n \geq 1$ are such that $\sigma_i=\tau_i$ for every $i\leq N$.
We may construct a coupled pair of random variables $(\eta_\sigma,\eta_\tau)$ such that $\eta_\sigma$ has the law of $\eta_B$ under $\bP_{\beta,\sigma}$, $\eta_\tau$ has the law of $\eta_B$ under $\bP_{\beta,\tau}$, the random variables $(\eta_\sigma(\{x,y\}),\eta_\tau(\{x,y\}))$ and $(\eta_\sigma(\{a,b\}),\eta_\tau(\{a,b\}))$ are independent when $\{x,y\}\neq \{a,b\}$, and 
\[
\P\Bigl(\eta_\sigma(\{x,y\}) \neq \eta_\tau(\{x,y\})\Bigr)=1-\exp(-\beta|J_{\sigma,B}(x,y)-J_{\tau,B}(x,y)|) \leq \beta|J_{\sigma,B}(x,y)-J_{\tau,B}(x,y)|
\]
for every $x,y\in \Z^d$, where we write $\P$ for the joint law of this coupled pair of random variables.
We can also estimate
\begin{align}
|J_{\sigma,B}(x,y)-J_{\tau,B}(x,y)| &\leq |R_{\sigma}(x,y)-R_\tau(x,y)| + \sum_{m=N+1}^\infty \sum_{B'\in \cB^\sigma_m} H_{B'}(x,y) + \sum_{m=N+1}^\infty \sum_{B'\in \cB^\tau_m} H_{B'}(x,y) \nonumber\\
&\leq 3c \min \{L^{-(d+\alpha)(N+1)},\|x-y\|^{-d-\alpha}\}
\end{align}
for every distinct $x,y\in \Z^d$, where in the second line we used that
\begin{multline}
|R_{\sigma}(x,y)-R_\tau(x,y)| = |H_\sigma(x,y)-H_\tau(x,y)|=c|d_\sigma(x,y)^{-d-\alpha}-d_\tau(x,y)^{-d-\alpha}|\\ \leq c \min \{L^{-(d+\alpha)(N+1)},\|x-y\|^{-d-\alpha}\}.
\end{multline}
It follows by an elementary calculation that there exists a constant $A=A(d,\alpha,c)$ such that
\begin{equation}
\label{eq:defects}
\sum_{y\in \Z^d} |J_{\sigma,B}(x,y)-J_{\tau,B}(x,y)|  \leq A L^{-\alpha N}
\end{equation}
for every $x \in \Z^d$. The only important feature of this bound is that it tends to zero as $N\to\infty$ uniformly in $x$.


Fix an enumeration $\{x_1,x_2,\ldots\}$ of $\Z^d$ and let $\opleq$ be the associated total order on $\Z^d$.
Suppose that we explore the cluster of a vertex $x$ in both $\eta_\sigma$ and $\eta_\tau$ one vertex at a time as follows: We first reveal the status of every edge incident to $x$. At each subsequent step we choose the $\opleq$-minimal point of $\Z^d$ that has not already been chosen and that is incident to a revealed open edge and reveal the status of every edge incident to that vertex.  We stop when no such vertices remain. If we run this process for $k$ steps, the probability we find that the cluster of $x$ differs in $\eta_\sigma$ and $\eta_\tau$ is at most $A \beta L^{-\alpha N} k$ by \eqref{eq:defects} and Markov's inequality, so that
\begin{equation}
\label{eq:connectivity_continuity}
\P(\text{the clusters of $x$ in $\eta_\sigma$ and $\eta_\tau$ are distinct})\leq \P_{\beta}(|K_x|\geq k) + A \beta L^{-\alpha N} k
\end{equation}
for every $x \in \Z^d$ and $k\geq 1$. Since $\beta <\beta_c$ we have that $\P_{\beta}(|K_x|\geq k) \to 0$ as $k\to \infty$. Thus, taking, say, $k=N$ we deduce that the left hand side of \eqref{eq:connectivity_continuity} is small when $N$ is large, uniformly in $\sigma$, $\tau$, and $x$. This establishes the desired distributional continuity of $K_B(x)$ and concludes the proof. \qedhere

\end{proof}


\begin{proof}[Proof of \cref{lem:ancestrally_minimal}]
Fix $0 \leq \beta < \beta_c$ and an arbitrary sequence $\sigma_0 \in \Sigma$. 
For each $n\geq 1$, every $n$-block must have at least one good child by \cref{lem:good_children}. By picking good children of good children recursively, we may find for each $n\geq 1$ a vertex $x_n$ such that $B_m^{\sigma_0}(x_n)$ is good for every $0\leq m \leq n$. Since $J$ is translation-invariant, we deduce that for each $n\geq 0$ there exists $\sigma_n=\sigma-x_n \in \Sigma$ such that $B^{\sigma_n}_m=B^{\sigma_n}_m(0)$ is good for every $0\leq m \leq n$. By compactness of $\Sigma$, the sequence $(\sigma_n)_{n\geq 0}$ has a subsequence converging pointwise to some limit $\sigma_\infty \in \Sigma$, and it follows from \cref{lem:weak_continuity} that $\{0\}$ is ancestrally good under $\sigma_\infty$ as claimed. \qedhere

\end{proof}


Now that we know that $\sigma$ can be chosen so that $\{0\}$ is ancestrally good, our next goal will be to prove an upper bound on $M_B$ for ancestrally good blocks when $L$ is large.

\begin{prop}
\label{prop:maximum_upper}
There exists an integer $L_0=L_0(d,\alpha) \geq 2$ such that the following holds: If $L \geq L_0$ then
there exists a constant $A=A(d,L,\alpha)$ such that 
\[
M_B^2 < \frac{A}{c\beta} L^{(d+\alpha)n}
\]
for every $0< \beta < \beta_c$, $\sigma\in \Sigma$, $n\geq 0$, and every ancestrally good $n$-block $B\in \cB^\sigma_n$.
\end{prop}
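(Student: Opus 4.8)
The plan is to run the \emph{runaway observable argument} of \cite{hutchcrofthierarchical} in the present non-transitive setting, using ancestral goodness as the device that restores enough comparability between blocks of the same generation. Fix $0<\beta<\beta_c$. Write $f(B) = \bE_{\beta,\sigma}|K^{\max}_B|$ for a block $B$, so that by \eqref{eq:mean_and_median} controlling $f(B)$ is equivalent up to constants to controlling $M_B$. The target estimate $M_B^2 \lesssim (c\beta)^{-1} L^{(d+\alpha)n}$ is equivalent to $f(B) \lesssim (c\beta)^{-1/2} L^{(d+\alpha)n/2}$, i.e.\ to the statement that the rescaled quantity $Q_B := (c\beta)^{1/2} L^{-(d+\alpha)n/2} f(B)$ is bounded by a constant $A_0=A_0(d,L,\alpha)$ for every ancestrally good $n$-block.

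The heart of the argument is a recursive inequality relating $f(\sigma(B))$ to $f(B)$ for a good block $B$, obtained from the fundamental identity $\eta_{\sigma(B)} = \eta_B \cup \omega_{\sigma(B)}$ in \eqref{eq:eta_recursion}. The block $\sigma(B)$ has $L^d$ children, each carrying (conditionally on the remainder configuration and the sub-block configurations) roughly i.i.d.\ copies of the cluster structure described by $\eta$; the edges of $\omega_{\sigma(B)}$ have intensity $\asymp c\beta L^{-(d+\alpha)n}$ between any two distinct children-classes. When we add $\omega_{\sigma(B)}$, each child contributes a ``giant-type'' piece of size of order $M_{B'}$ (by the universal tightness bounds \eqref{eq:BigClusterUnrooted}, \eqref{eq:BigClusterRooted}), and the added long edges merge these pieces according to a Bernoulli random graph on $L^d$ vertices with connection probability $\asymp c\beta L^{-(d+\alpha)n} M_{B'}^2$. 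The key point is that because $B$ is good, condition \eqref{eq:good_def_1} forces $M_B$ (equivalently $f(B)$) to be dominated by at least $\lfloor \tfrac12 L^d\rfloor - 1$ of its siblings' values, so a definite fraction of the children of $\sigma(B)$ have clusters at least as large as $M_B$; likewise \eqref{eq:good_def_2} controls the relevant second-moment (susceptibility-type) quantity $\sum_{x,y\in B}\bP(x\leftrightarrow y \text{ in }\eta_B)$ by that of a sibling. Carrying the bookkeeping through (as in \cite{hutchcrofthierarchical}), one obtains an inequality of the schematic form
\[
Q_{\sigma(B)} \;\ge\; \tfrac12\, c_d\, L^{\alpha/2}\, Q_B\quad\text{whenever } Q_B \ge A_0,
\]
for a dimensional constant $c_d>0$, once $L \ge L_0(d,\alpha)$ is large enough that $c_d L^{\alpha/2}/2 > 1$; the second issue flagged in the text — that $|K^{\max}_{B_1}|$ and $|K^{\max}_{B_2}|$ are not exactly independent for disjoint siblings because of shared far-away small-block edges and the common $\omega_R$ — is absorbed by an extra application of \eqref{eq:BigClusterUnrooted}/\eqref{eq:BigClusterRooted} to show the dependence is negligible at the level of the relevant moments.

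To finish: suppose for contradiction that $Q_B > A_0$ for some ancestrally good $n$-block $B$. Since $B$ is ancestrally good, $B$, $\sigma(B)$, $\sigma^2(B)$, \dots are all good, so the recursion applies up the whole chain of ancestors, giving $Q_{\sigma^k(B)} \ge (c_d L^{\alpha/2}/2)^k Q_B \to \infty$. Unwinding the normalization, $\bE_{\beta,\sigma}|K^{\max}_{\sigma^k(B)}|$ grows faster than $L^{(d+\alpha)n_k/2}$ where $n_k = n+k$ is the generation of $\sigma^k(B)$, hence certainly tends to infinity. But $K^{\max}_{\sigma^k(B)} \cap \sigma^k(B) \subseteq K_\omega(x)$ for a suitable vertex $x$ in the full configuration $\omega$ (whose law is $\bP_\beta$ since $\eta_{B'} \subseteq \omega$ for every block), and sharpness of the phase transition \cite{aizenman1987sharpness,duminil2015new,1901.10363} gives $\bE_\beta|K_\omega(x)| = \bE_\beta|K| < \infty$ for $\beta<\beta_c$, translation-invariance making this bound uniform in $x$. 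This contradiction shows $Q_B \le A_0$ for every ancestrally good block, which is the claim with $A = A_0^2$.

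I expect the main obstacle to be the recursive inequality itself: making the ``merging'' step rigorous requires (i) a clean conditional-independence decomposition of $\eta_{\sigma(B)}$ into the $L^d$ child contributions plus the new layer $\omega_{\sigma(B)}$, (ii) showing that a good block's value is controlled by a \emph{positive fraction} of sibling values simultaneously with high probability (not just in expectation), which is exactly where \eqref{eq:good_def_1}–\eqref{eq:good_def_2} and the tightness estimates \eqref{eq:BigClusterUnrooted}–\eqref{eq:mean_and_median} must be combined carefully, and (iii) handling the lack of genuine independence between disjoint siblings. All three were the technical core of \cite{hutchcrofthierarchical}; here they are complicated by the non-translation-invariance of $H_\sigma$ and $R_\sigma$, so the estimates must be stated for a fixed arbitrary $\sigma$ and the constants tracked to be $\sigma$-independent.
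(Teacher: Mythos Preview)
Your overall strategy---a runaway observable argument along the ancestral chain, terminated by sharpness---is the same as the paper's, and the use of goodness via \eqref{eq:good_def_1} to get a positive fraction of comparable siblings is exactly right. However, there is a genuine gap in your final contradiction step, and a minor exponent error in the recursion.

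\textbf{The contradiction step does not work as written.} You assert that $\bE_{\beta,\sigma}|K^{\max}_{\sigma^k(B)}|\to\infty$ together with the pointwise inclusion $K^{\max}_{\sigma^k(B)}\cap\sigma^k(B)\subseteq K_\omega(x)$ ``for a suitable vertex $x$'' contradicts $\bE_\beta|K|<\infty$. But the vertex $x$ here is \emph{random}---it lies in the maximising cluster---so the inclusion does not give $\bE_{\beta,\sigma}|K^{\max}_{B_m}|\leq \bE_\beta|K(0)|$. Indeed, $|K^{\max}_{B_m}|$ is a maximum over $L^{dm}$ vertices and can trivially be as large as $L^{dm}$ even when every individual cluster is small; there is nothing to contradict. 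The paper closes this gap with a second-moment trick: since every vertex of the maximising cluster contributes at least $|K^{\max}_{B_m}|$ to the sum $\sum_{x\in B_m}|K(x)\cap B_m|$, one has the deterministic inequality
\[
\sum_{x\in B_m}|K_\omega(x)| \;\geq\; \sum_{x\in B_m}|K_{B_m}(x)\cap B_m| \;\geq\; |K^{\max}_{B_m}|^2,
\]
so by translation-invariance and Jensen,
\[
\bE_\beta|K(0)| = L^{-dm}\sum_{x\in B_m}\bE_\beta|K(x)| \;\geq\; L^{-dm}\,\bE_{\beta,\sigma}\bigl[|K^{\max}_{B_m}|^2\bigr] \;\geq\; L^{-dm}\cdot\frac{1}{4e^2}M_{B_m}^2 \;\gtrsim\; L^{\alpha m},
\]
which \emph{does} diverge. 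Without this squaring step your argument only yields $\bE_\beta|K(0)|\gtrsim L^{-dm}\bE|K^{\max}_{B_m}|$, which need not diverge.

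\textbf{Two smaller points.} First, the growth factor in your recursion should be $L^{(d-\alpha)/2}$, not $L^{\alpha/2}$: the merging step produces a cluster of size $\gtrsim L^d M_B$, and after dividing by $L^{(d+\alpha)/2}$ one gets $Q_{\sigma(B)}\gtrsim L^{(d-\alpha)/2}Q_B$. (The paper in fact only uses the weaker conclusion $Q_{\sigma(B)}\geq Q_B$, i.e.\ that the threshold is maintained, which is all that is needed once the second-moment trick above is in place.) Second, condition \eqref{eq:good_def_2} plays no role in this proposition; only \eqref{eq:good_def_1} is used here, while \eqref{eq:good_def_2} enters later in the susceptibility renormalization.
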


Since $M_B \geq 2$ for every block $B$, taking $L=L_{0}$ and applying \cref{prop:maximum_upper} in the case $n=0$ yields the following immediate corollary in conjunction with \cref{lem:ancestrally_minimal}. 

\begin{corollary}
\label{cor:betabound} 
Let $J:\Z^d\times \Z^d \to [0,\infty)$ be a symmetric, integrable function, let $0<\alpha<d$, and suppose that there exists a positive constant $c$ such that $J(x,y) \geq c\|x-y\|^{-d-\alpha}$ for every distinct $x,y\in \Z^d$. Then there exists a constant $\beta_1=\beta_1(d,\alpha)$ such that $c\beta_c \leq \beta_1$.
\end{corollary}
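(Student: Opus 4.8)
The plan is to read off \cref{cor:betabound} as a one-step substitution into \cref{prop:maximum_upper}, using \cref{lem:ancestrally_minimal} to guarantee the relevant block exists and using the trivial bound $M_B\geq 2$ to convert the upper bound on $M_B$ into an upper bound on $c\beta$.

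First I would recall that under the stated hypotheses the critical parameter satisfies $0<\beta_c<\infty$, so it suffices to prove $c\beta\leq\beta_1$ for every $0<\beta<\beta_c$ and then let $\beta\uparrow\beta_c$ (in the degenerate case $\beta_c=0$ there is nothing to prove). So fix $0<\beta<\beta_c$. By \cref{lem:ancestrally_minimal} there is a decomposition $\sigma\in\Sigma$, depending on $\beta$, for which the singleton block $\{0\}=B^\sigma_0$ is ancestrally good. Now apply \cref{prop:maximum_upper} with $L=L_0=L_0(d,\alpha)$ and $n=0$ to this block: since $L_0^{(d+\alpha)\cdot 0}=1$ and the constant $A(d,L_0,\alpha)$ is then a function of $d$ and $\alpha$ alone, we obtain $M_{\{0\}}^2<A(d,\alpha)/(c\beta)$. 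On the other hand $M_{\{0\}}\geq 2$ by definition of $M_B$, so $4\leq M_{\{0\}}^2<A(d,\alpha)/(c\beta)$, whence $c\beta<A(d,\alpha)/4$. As $0<\beta<\beta_c$ was arbitrary, letting $\beta\uparrow\beta_c$ gives $c\beta_c\leq A(d,\alpha)/4=:\beta_1(d,\alpha)$.

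Since this is essentially a direct deduction from results established earlier, I do not expect any real obstacle; the only points needing a moment's care are (i) fixing $L$ to the specific value $L_0(d,\alpha)$ so that the constant in \cref{prop:maximum_upper} becomes genuinely universal (depending only on $d$ and $\alpha$) rather than carrying an $L$-dependence, and (ii) checking that $\beta_c$ is finite and positive so that the limiting step $\beta\uparrow\beta_c$ is legitimate. All the genuine content of the corollary is carried by \cref{prop:maximum_upper} and \cref{lem:ancestrally_minimal}.
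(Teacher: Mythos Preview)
Your proof is correct and essentially identical to the paper's own argument: fix $\beta<\beta_c$, use \cref{lem:ancestrally_minimal} to obtain a $\sigma$ with $\{0\}$ ancestrally good, apply \cref{prop:maximum_upper} at $L=L_0$ and $n=0$, and combine with $M_{\{0\}}\geq 2$ to get $c\beta<A/4$. The only cosmetic difference is that you spell out the limiting step $\beta\uparrow\beta_c$ and the finiteness of $\beta_c$ a bit more explicitly than the paper does.
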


\begin{proof}[Proof of \cref{cor:betabound}]
Let $L_0=L_0(d,\alpha)\geq 2$ and $A=A(d,L_0,\alpha)$ be as in \cref{prop:maximum_upper} and let $\beta<\beta_c$.
By \cref{lem:ancestrally_minimal} there exists an $L_0$-adic hierarchical partition $\sigma$ of $\Z^d$ such that the $0$-block $\{0\}$ is ancestrally good. Since $M_{\{0\}}=2$ by definition, it follows from \cref{prop:maximum_upper} that
$c\beta < A/4$, and since $\beta<\beta_c$ was arbitrary it follows that $c\beta_c \leq A/4$. The claim follows since $L_0$ and hence $A$ were chosen to depend only on $d$ and $\alpha$.
\end{proof}

We will deduce \cref{prop:maximum_upper} from the sharpness of the phase transition together with the following renormalization lemma. Recall that $\sigma(B)$ denotes the parent of the block $B$ in the hierarchical decomposition $\sigma$.

\begin{lemma}[Renormalization of the maximum cluster size]
\label{lem:maximum_renormalization}
There exists an integer $L_0=L_0(d,\alpha) \geq 2$ such that the following holds: 
If $L \geq L_0$ then there exists a constant $A=A(d,L,\alpha)$ such that the implication
\[\left(\text{$B$ good and }M_B^2 \geq \frac{A}{c\beta} L^{(d+\alpha)n}\right) \Rightarrow \left(M_{\sigma(B)}^2 \geq \frac{A}{c\beta} L^{(d+\alpha)(n+1)} \right)\]
holds for every $\beta > 0$, every $\sigma \in \Sigma$, every $n\geq 0$, and every $n$-block $B \in \cB^\sigma_n$.
\end{lemma}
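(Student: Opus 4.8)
\textbf{Proof proposal for \cref{lem:maximum_renormalization}.}

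The plan is to run the standard second-moment renormalization argument from \cite{hutchcrofthierarchical}, but carefully tracking the fact that all of the relevant quantities now live on the non-transitive configurations $\eta_B$ and $\eta_{\sigma(B)}$. Recall the recursion \eqref{eq:eta_recursion}: $\eta_{\sigma(B)} = \eta_B \cup \omega_{\sigma(B)}$, where $\omega_{\sigma(B)}$ is an independent percolation configuration with kernel $H_{\sigma(B)}$, which places each edge $\{x,y\}$ with $x,y\in\sigma(B)$ and $h_\sigma(x,y) = n+1$ open with probability $1-\exp(-\beta c L^{-(d+\alpha)(n+1)})$. The key point is that adding just a handful of these cross-block edges between the $L^d$ children of $\sigma(B)$ is enough to merge several of the large sub-clusters living in those children into one giant cluster inside $\sigma(B)$, and goodness of $B$ is exactly what guarantees that enough of its siblings have comparably large clusters for this merging to be quantitatively effective.

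Here is the order I would carry the steps out. First I would fix $\sigma$, $\beta$, $n$ and a good $n$-block $B$, and list the $L^d$ children-of-$\sigma(B)$, i.e. the siblings $B = B^{(1)}, B^{(2)}, \dots, B^{(L^d)}$ of $B$ together with $B$ itself; by \eqref{eq:good_def_1}, at least $\lfloor \frac12 L^d\rfloor - 1$ of the other siblings $B'$ satisfy $\bE_{\beta,\sigma}|K_{B'}^{\max}| \geq \bE_{\beta,\sigma}|K_B^{\max}|$, hence by \eqref{eq:mean_and_median} satisfy $M_{B'} \geq c_0 M_B$ for a universal constant $c_0$ (something like $1/(20e)$). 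Note all these $\eta_{B'}$ coincide as random configurations since siblings share a parent, so in fact we are looking at many disjoint large clusters inside a single configuration $\eta_B = \eta_{B'}$. Second, using the lower tail bound in \eqref{eq:BigClusterUnrooted} applied inside each sibling block, with positive probability (bounded below by a universal constant) a positive fraction — say at least $\lfloor\frac14 L^d\rfloor$ — of the good siblings simultaneously contain a cluster of $\eta_B$ meeting them in at least $\eps M_B$ vertices for a small universal $\eps$; here one must be slightly careful because the events ``$|K_{B'}^{\max}| \geq \eps M_{B'}$'' for different siblings $B'$ are not independent (they are all events about the single configuration $\eta_B$), so I would instead argue via a first-moment/Markov bound: the expected number of good siblings failing to have a cluster of size $\geq \eps M_B$ meeting them is at most $27\eps \lfloor\frac12 L^d\rfloor$ by \eqref{eq:BigClusterUnrooted}, so for $\eps$ a small universal constant, with probability $\geq 1/2$ at least $\lfloor\frac18 L^d\rfloor$ of them have such a large cluster. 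Third, condition on $\eta_B$ and reveal the cross-scale edges $\omega_{\sigma(B)}$: each ordered pair of the identified large clusters, one in sibling $B'$ and one in sibling $B''$, has at least $(\eps M_B)^2$ potential edges between them each open with probability $\gtrsim \beta c L^{-(d+\alpha)(n+1)}$, so the probability that no such edge is open is at most $\exp(-c_1 \beta c (\eps M_B)^2 L^{-(d+\alpha)(n+1)})$; using the hypothesis $M_B^2 \geq (A/(c\beta)) L^{(d+\alpha)n}$ this exponent is at least $c_1 \eps^2 A L^{-(d+\alpha)}$, which by choosing $A$ large (depending on $d, L, \alpha$) we can make as large as we like, in particular we can force that with probability $\geq 1/2$ all of the $\sim \frac{1}{64} L^{2d}$ pairs get connected, i.e. all $\lfloor\frac18 L^d\rfloor$ large clusters merge into a single cluster of $\eta_{\sigma(B)}$ inside $\sigma(B)$ of size at least $\lfloor\frac18 L^d\rfloor \cdot \eps M_B$. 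Fourth, collecting the constant-probability events, we get $\bP_{\beta,\sigma}(|K_{\sigma(B)}^{\max}| \geq c_2 L^d M_B) \geq c_3$ for universal constants; feeding this into the definition of $M_{\sigma(B)}$ together with the upper tail bound \eqref{eq:BigClusterUnrooted} (to convert a constant-probability lower bound into a bound on the median $M_{\sigma(B)}$) yields $M_{\sigma(B)} \geq c_4 L^d M_B$, hence $M_{\sigma(B)}^2 \geq c_4^2 L^{2d} M_B^2 \geq c_4^2 L^{2d} \cdot \frac{A}{c\beta} L^{(d+\alpha)n}$, and since $2d \geq d+\alpha$ (as $\alpha < d$) this is at least $\frac{A}{c\beta} L^{(d+\alpha)(n+1)}$ provided $c_4^2 L^{2d - d - \alpha} = c_4^2 L^{d-\alpha} \geq L^{d+\alpha}$... which actually requires a little care, so let me re-examine: we need $c_4^2 L^{2d} \geq L^{d+\alpha}$, i.e. $c_4^2 L^{d - \alpha} \geq 1$, which holds for $L$ large since $d > \alpha$. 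That is where the largeness of $L_0 = L_0(d,\alpha)$ is consumed, together with absorbing universal constants.

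The main obstacle I anticipate is Step 3's dependence structure, compounded by the non-translation-invariance: the configuration $\eta_B$ can contain edges reaching arbitrarily far outside $\sigma(B)$ (from $\omega_R$ and from small far-away blocks $\omega_{B'}$ that are not ancestors of $B$), so a priori the ``large cluster inside $B'$'' could be a single sprawling cluster that already connects several siblings through the outside, or the clusters in different siblings could fail to be disjoint. One must phrase the argument so that this does not matter: if the large sub-clusters in different siblings are already merged in $\eta_B$ that only helps (the merged cluster is then already large inside $\sigma(B)$), and the merging estimate in Step 3 should be applied to whatever clusters the identified large vertex-sets belong to, noting that a cross-scale edge between two such vertex-sets connects their clusters regardless of anything else. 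The cleanest way to handle this is to work with the random \emph{vertex sets} $S_{B'} := K^{\max}_{B'} \cap B'$ (of size $\geq \eps M_B$ on the relevant good event) and argue that after adding $\omega_{\sigma(B)}$ the union $\bigcup_{B' \text{ good}} S_{B'}$ lies in a single cluster of $\eta_{\sigma(B)}$; since these $S_{B'}$ are disjoint (they live in disjoint blocks $B'$) their union has size $\geq \lfloor\frac18 L^d\rfloor \eps M_B$, which is all we need. The remaining delicate point is the conditioning in Step 3: the sets $S_{B'}$ and the ``enough good siblings'' event are measurable with respect to $\eta_B$, which is independent of $\omega_{\sigma(B)}$, so conditionally on $\eta_B$ the cross-scale edges are genuinely independent Bernoulli and the large-deviation merging estimate is legitimate; one just has to be careful to fix the identity of the large clusters \emph{before} revealing $\omega_{\sigma(B)}$.
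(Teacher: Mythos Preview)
Your proposal is correct and follows essentially the same argument as the paper: use goodness to find $\sim L^d/2$ siblings with $M_{B'}\gtrsim M_B$, apply the lower tail of universal tightness together with a Markov bound to show that with constant probability a positive fraction of these siblings simultaneously host clusters of size $\gtrsim \eps M_B$ inside them, then condition on $\eta_B$ (which determines all these clusters and is independent of $\omega_{\sigma(B)}$) and use a union bound over pairs to merge them via the cross-scale edges; finally compare the resulting lower bound on $|K_{\sigma(B)}^{\max}|$ with $M_{\sigma(B)}$.

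The one presentational difference is in the last step. The paper arranges constants so that the ``all clusters merge'' event has probability strictly greater than $1/e$, which by the \emph{definition} of $M_{\sigma(B)}$ immediately gives $M_{\sigma(B)}\geq L^{(d+\alpha)/2}M_B$ without any appeal to the upper tail bound. Your route through the upper tail in \eqref{eq:BigClusterUnrooted} (getting $M_{\sigma(B)}\geq c_4 L^d M_B$ from a constant-probability event and then using $c_4^2 L^{d-\alpha}\geq 1$) works too but is a slight detour; targeting probability $>1/e$ directly is cleaner and avoids one extra constant. Your discussion of the possible pre-merging of clusters in $\eta_B$ and the measurability/independence issues is spot on, and indeed matches how the paper resolves these points (fixing the sets $D_i\subseteq B_i$ as $\cF$-measurable subsets of disjoint blocks, so that their union has the claimed size regardless of whether the underlying clusters coincide).
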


\begin{proof}[Proof of \cref{lem:maximum_renormalization}]
Fix $\eps$, $\beta$, and $\sigma$ and let $B$ be a good $n$-block for some $n\geq 0$. Since $B$ is good there are at least $\lfloor L^d/2 \rfloor-1$ siblings $B'$ of $B$ with $\bE_{\beta,\sigma}|K_{B'}^{\max}| \geq \bE_{\beta,\sigma}|K_{B}^{\max}|$. Let $\ell$ be the number of these siblings, which we enumerate $B_1,\ldots,B_\ell$ and write $B_0=B$. For each $0\leq i \leq \ell$ we have by \eqref{eq:mean_and_median} that
\[
M_{B_i} \geq \frac{1}{10}\bE_{\beta,\sigma}|K_{B_i}^{\max}| \geq  \frac{1}{10}\bE_{\beta,\sigma}|K_{B}^{\max}|  \geq \frac{1}{20 e} M_{B} \geq \frac{1}{60} M_B
\]
and hence by \eqref{eq:BigClusterUnrooted} that
\[
\bP_{\beta,\sigma}\left(|K_{B_i}^{\max}| < 2^{-14} M_{B}   \right) \leq \bP_{\beta,\sigma}\left(|K_{B_i}^{\max}| < \frac{1}{27 \cdot 8} M_{B_i}   \right) \leq \frac{1}{8}
\]
for every $0\leq i \leq L^d-1$, where we used that $27\cdot8\cdot 60 \leq 2^{14}$ in the first inequality.
It follows by Markov's inequality that
\begin{equation}
\label{eq:most_largest_clusters_large}
\bP_{\beta,\sigma}\left(\#\left\{ 0 \leq i \leq \ell : |K_{B_i}^{\max}| < 2^{-14} M_{B_0}\right\} \leq \frac{\ell+1}{2}   \right) \geq 1-\frac{1}{4}.
\end{equation}
Let $\sA$ be the event whose probability is estimated on the left hand side of \eqref{eq:most_largest_clusters_large} and let $\cF$ be the sigma-algebra generated by $\eta_B$, which we recall is equal to $\eta_{B_i}$ for every $1\leq i \leq \ell$.
Observe that $\sA$ is measurable with respect to $\cF$ and that $\cF$ is independent of the configuration $\omega_{\sigma(B)}$ since this configuration does not contribute to any of the configurations $\eta_{B_i}$ for $0\leq i \leq \ell$. 
For each $0 \leq i \leq \ell$, let $D_i \subseteq B_i$ be such that $|D_i|=|K_{B_i}^{\max}|$ and $D_i = K \cap B_i$ for some cluster $K$ of $\eta_{B_i}$, where we break ties in an arbitrary $\cF$-measurable way (e.g. using an enumeration of $\Z^d$ that is chosen in advance). For each $0\leq i,j \leq \ell$, the conditional probability given $\cF$ that $D_i$ is connected to $D_j$ by an edge of $\omega_{\sigma(B)}$ is equal to
\[1-\exp\left[-c \beta L^{-(d+\alpha)(n+1)} |D_i| |D_j| \right]. \]
Thus, it follows by a union bound that the conditional probability given $\cF$ that $D_i$ is connected to $D_j$ by an edge of $\omega_{\sigma(B)}$ for every $i$ and $j$ with $|D_i|,|D_j| \geq 2^{-14} M_B$ is a least
\[1-\binom{\ell+1}{2} \exp\left[-2^{-28}c \beta  L^{-(d+\alpha)(n+1)} M_B^2 \right], \]
so that
\[
\bP_{\beta,\sigma}\left(|K_{\sigma(B)}^{\max}| \geq 2^{-15} (\ell+1) M_B \mid \sA\right) \geq 1-\binom{\ell+1}{2}\exp\left[-2^{-28}c \beta L^{-(d+\alpha)(n+1)} M_B^2 \right].
\]
Since $\ell +1 \geq \lfloor L^d/2\rfloor$ and $\alpha <d$, there exists an integer $L_0=L_0(d,\alpha) \geq 2$ such that if $L\geq L_0$ then  
 $2^{-15} (\ell+1) \geq L^{(d+\alpha)/2}$, so that if $L\geq L_0$ then
\[
\bP_{\beta,\sigma}\left(|K_{\sigma(B)}^{\max}| \geq  L^{(d+\alpha)/2} M_B \mid \sA\right) \geq 1-L^{2d}\exp\left[-2^{-28}c \beta L^{-(d+\alpha)(n+1)} M_B^2 \right].
\]
It follows that if $L\geq L_0$ and $A$ is such that $M_B^2 \geq \frac{A}{c\beta} L^{(d+\alpha)n}$ then
\begin{equation}
\bP_{\beta,\sigma}\left(|K_{\sigma(B)}^{\max}| \geq  L^{(d+\alpha)/2} M_B \right) \geq 1-L^{2d}\exp\left[-\frac{A}{2^{28}L^{d+\alpha}} \right] - \frac{1}{4}.
\label{eq:max_renormalization_final}
\end{equation}
If $A$ is chosen sufficiently large as a function of $d$, $L$, and $\alpha$ then the right hand side of \eqref{eq:max_renormalization_final} is larger than $1/e$, so that that $M_{\sigma(B)} \geq L^{(d+\alpha)/2} M_B$ whenever $L\geq L_0$ and $M_B^2 \geq \frac{A}{c\beta} L^{(d+\alpha)n}$. This completes the proof.
\end{proof}

It remains to deduce \cref{prop:maximum_upper} from \cref{lem:maximum_renormalization}.

\begin{proof}[Proof of \cref{prop:maximum_upper}]
Let $L_0$ and $A$ be as in \cref{lem:maximum_renormalization} and suppose that $L\geq L_0$. Fix  $\sigma \in \Sigma$ and $0\leq \beta <\beta_c$ and let $B$ be an ancestrally good $n$-block for some $n\geq 0$. (We may assume that $\sigma$ is such that ancestrally good $n$-blocks exist, the claim being vacuous otherwise.) Write $B=B_n$ and for each $m\geq n$ let $B_m$ be the unique $m$-block that is an ancestor of $B$.
  Suppose for contradiction that $M_B^2 \geq \frac{A}{c\beta} L^{(d+\alpha)/2}$. Applying \cref{lem:maximum_renormalization} recursively yields that
\[
M_{B_m}^2 \geq \frac{A}{c\beta} L^{(d+\alpha)m} \qquad \text{ and hence that } \qquad \bE_{\beta,\sigma} |K_{B_m}^{\max}| \geq \frac{1}{2e} M_{B_m} \geq \frac{1}{2e} \sqrt{\frac{A}{c\beta} L^{(d+\alpha)m}}
\]
for every $m\geq n$. Bounding the volume of the entire cluster of a vertex $x$ in the whole percolation configuration by the volume of the intersection of its cluster in $\eta_{B_m}$ with $B_m$, it follows by transitivity and Jensen's inequality that
\[
 \bE_\beta |K(0)| = L^{-dm}\bE_{\beta}\left[\sum_{x\in B_m}  |K(x)| \right] \geq L^{-dm}\bE_{\beta,\sigma} \left[|K_{B_m}^{\max}|^2\right] \geq \frac{A}{4e^2 c\beta} L^{\alpha m}
\]
for every $m\geq n$. Taking $m\to\infty$ yields that  $\bE_\beta |K(0)|=\infty$, contradicting the sharpness of the phase transition \cite{duminil2015new,1901.10363,aizenman1987sharpness} since $\beta<\beta_c$.
\end{proof}

\subsection{Upper bounds on the restricted two-point function}
\label{sec:restricted_upper}

We next prove an upper bound on the average connection probability between two points in an ancestrally good box under the restricted configuration $\eta_B$. This estimate will be deduced from \cref{prop:maximum_upper} and the universal tightness theorem via a further runaway observable argument. 

\begin{prop}
\label{prop:susceptibility_upper}
There exists an integer $L_0=L_0(d,\alpha) \geq 2$ such that the following holds: If $L \geq L_0$ then
there exists a constant $A=A(d,L,\alpha)$ such that 
\[
\sum_{x,y\in B} \bP_{\beta,\sigma}(x\leftrightarrow y \text{ in $\eta_B$}) < \frac{A}{c\beta} L^{(d+\alpha)n}
\]
for every $0< \beta < \beta_c$, $\sigma\in \Sigma$, $n\geq 0$, and every ancestrally good $n$-block $B\in \cB^\sigma_n$.
\end{prop}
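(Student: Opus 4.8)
The plan is to run the same kind of \emph{runaway observable argument} used to prove \cref{prop:maximum_upper}, with the restricted susceptibility
\[
S_B\ :=\ \sum_{x,y\in B}\bP_{\beta,\sigma}(x\leftrightarrow y\text{ in }\eta_B)\ =\ \bE_{\beta,\sigma}\sum_{x\in B}|K_B(x)\cap B|
\]
playing the role of $M_B^2$. First I would prove a renormalization lemma of the shape: there is $L_0=L_0(d,\alpha)$ such that if $L\ge L_0$ then there is a constant $A=A(d,L,\alpha)$ for which the implication $\bigl(\text{$B$ ancestrally good and }S_B\ge\tfrac{A}{c\beta}L^{(d+\alpha)n}\bigr)\Rightarrow\bigl(S_{\sigma(B)}\ge\tfrac{A}{c\beta}L^{(d+\alpha)(n+1)}\bigr)$ holds for every $0<\beta<\beta_c$, $\sigma\in\Sigma$, $n\ge 0$, and $n$-block $B$. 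Granting this, \cref{prop:susceptibility_upper} follows exactly as \cref{prop:maximum_upper} followed from \cref{lem:maximum_renormalization}: if some ancestrally good $n$-block $B$ violated the asserted bound, then, since all ancestors $B=B_n,B_{n+1},B_{n+2},\dots$ of $B$ are good and hence ancestrally good, iterating the lemma would give $S_{B_m}\ge\tfrac{A}{c\beta}L^{(d+\alpha)m}$ for every $m\ge n$; but bounding the cluster of each $x\in B_m$ in $\eta_{B_m}$ by its cluster in the full configuration $\omega$ and using translation-invariance gives the a priori bound $S_{B_m}\le\sum_{x\in B_m}\bE_\beta|K(x)|=L^{dm}\,\bE_\beta|K(0)|$, which is finite for $\beta<\beta_c$ by the sharpness of the phase transition \cite{aizenman1987sharpness,duminil2015new,1901.10363}, and since $d+\alpha>d$ this is a contradiction as $m\to\infty$.

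To prove the renormalization lemma I would condition on $\eta_B$ and add the configuration $\omega_{\sigma(B)}$, using \eqref{eq:eta_recursion} and the fact that $\eta_{B'}=\eta_B$ for every child $B'$ of $\sigma(B)$. Since $B$ is good, \eqref{eq:good_def_2} supplies a sibling $B'$ with $S_{B'}\ge S_B$, and I would lower bound $S_{\sigma(B)}$ by the expected number of connected ordered pairs $(u,v)\in B\times B'$ in $\eta_{\sigma(B)}$. Conditionally on $\eta_B$, every potential edge between $K_B(u)\cap B$ and $K_B(v)\cap B'$ joins two points at hierarchical distance exactly $n+1$ and so belongs to $\omega_{\sigma(B)}$ with probability $1-\exp(-c\beta L^{-(d+\alpha)(n+1)})$, independently of $\eta_B$; hence
\[
\bP_{\beta,\sigma}\bigl(u\leftrightarrow v\text{ in }\eta_{\sigma(B)}\bigm|\eta_B\bigr)\ \ge\ 1-\exp\!\bigl(-c\beta L^{-(d+\alpha)(n+1)}\,|K_B(u)\cap B|\,|K_B(v)\cap B'|\bigr).
\]
Summing over $(u,v)$, using $1-e^{-s}\ge\tfrac12\min\{s,1\}$, and applying Harris's inequality to the two increasing functions $\tilde S_B:=\sum_{u,v\in B}\mathbbm 1(u\leftrightarrow v\text{ in }\eta_B)$ and $\tilde S_{B'}$ of the product measure $\eta_B$ (so that $\bE_{\beta,\sigma}[\tilde S_B\tilde S_{B'}]\ge S_BS_{B'}\ge S_B^2$) gives, modulo the truncation discussed below, a bound of the form $S_{\sigma(B)}\gtrsim c\beta L^{-(d+\alpha)(n+1)}S_B^2$. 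Under the hypothesis $S_B\ge\tfrac{A}{c\beta}L^{(d+\alpha)n}$ this reads $S_{\sigma(B)}\gtrsim\tfrac{A^2}{c\beta}L^{(d+\alpha)n-(d+\alpha)}$, which exceeds $\tfrac{A}{c\beta}L^{(d+\alpha)(n+1)}$ once $A$ is chosen large enough in terms of $d$, $L$, $\alpha$, completing the lemma.

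The main obstacle is the truncation coming from the $\min\{\cdot,1\}$: the clean identity $\sum_{u,v}\min\{s_{uv},1\}=c\beta L^{-(d+\alpha)(n+1)}\tilde S_B\tilde S_{B'}$ holds only on the event that the largest $\eta_B$-clusters meeting $B$ and $B'$ are not too large, so the contribution of pairs $(u,v)$ lying in anomalously large clusters must be estimated separately — with the compensating observation that whenever such clusters are present one in any case gains a macroscopic contribution from the many pairs whose connection probability has saturated to order $1$. This is exactly where the bound $M_B^2\le\tfrac{A'}{c\beta}L^{(d+\alpha)n}$ from \cref{prop:maximum_upper} (so $c\beta L^{-(d+\alpha)(n+1)}M_B^2$ is at most a constant depending on $d,L,\alpha$ only) and the universal tightness theorem \eqref{eq:BigClusterUnrooted}--\eqref{eq:BigClusterRooted} (so the maximum cluster sizes have exponential tails on the scale $M_B$) are needed, so that the truncation costs only a bounded factor. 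Two genuine complications must be navigated here, both of the kind flagged after \eqref{eq:mean_and_median}: the random variables $|K^{\mathrm{max}}_{B'}|$ for distinct children $B'$ of $\sigma(B)$ are not independent, since they are measurable with respect to the common configuration $\eta_B$; and the sibling $B'$ produced by \eqref{eq:good_def_2} need not itself be good, so \cref{prop:maximum_upper} does not directly control its maximum cluster. I would handle both by distributing the cross-term estimate over the at least $\tfrac12L^d$ children of $\sigma(B)$ that are good, which are automatically ancestrally good (a good child of the ancestrally good block $\sigma(B)$), exploiting \eqref{eq:good_def_1} and a further application of the universal tightness theorem in place of the missing independence, and using $c\beta_c\le\beta_1(d,\alpha)$ from \cref{cor:betabound} to absorb the various constants into $A$. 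I expect this last bookkeeping — rather than any single conceptual point — to be the technically demanding part of the argument.
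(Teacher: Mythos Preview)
Your overall plan is exactly the paper's: prove a renormalization lemma of the form $S_{\sigma(B)} \gtrsim c\beta L^{-(d+\alpha)n} S_B^2$ for ancestrally good $B$ (this is \cref{lem:susceptibility_renormalization}), then iterate and contradict sharpness, just as you describe. The use of \eqref{eq:good_def_2} to find a sibling $B'$ with $S_{B'}\ge S_B$, conditioning on $\eta_B$, adding $\omega_{\sigma(B)}$, and invoking Harris--FKG are all precisely what the paper does.

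Where you diverge is in handling the truncation. You correctly identify that $B'$ need not be good, so \cref{prop:maximum_upper} does not directly bound $M_{B'}$, but your proposed fix (redistributing over the $\tfrac12 L^d$ good children via \eqref{eq:good_def_1}) is unnecessarily involved and mixes up the roles of \eqref{eq:good_def_1} and \eqref{eq:good_def_2}. The paper's resolution is a one-line monotonicity observation you are missing: since $\eta_{B'}\subseteq\eta_{\sigma(B)}$ and $B'\subseteq\sigma(B)$, one has $|K^{\max}_{B'}|\le|K^{\max}_{\sigma(B)}|$ deterministically, hence $M_{B'}\le M_{\sigma(B)}$. As $\sigma(B)$ \emph{is} ancestrally good, \cref{prop:maximum_upper} applied to $\sigma(B)$ gives $M_{B'}\le M_{\sigma(B)}\lesssim h^{-1/2}$ where $h=c\beta L^{-(d+\alpha)(n+1)}$. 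With this in hand, the paper truncates each cluster size at $h^{-1/2}$ \emph{before} using $1-e^{-t}\ge(1-e^{-1})t$ on $[0,1]$, and then shows via a separate clean lemma (\cref{lem:truncated_susceptibility}, a direct consequence of \eqref{eq:BigClusterRooted}) that $\bE[|K_B(x)\cap B|\wedge(\lambda M_B)]\ge a\lambda\,\bE|K_B(x)\cap B|$ for $0<\lambda\le1$. Since $h^{-1/2}$ is comparable to both $M_B$ and $M_{B'}$, this truncation costs only a constant depending on $d,L,\alpha$, and the inequality $S_{\sigma(B)}\ge a\, c\beta L^{-(d+\alpha)n}S_B^2$ follows directly. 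No appeal to \eqref{eq:good_def_1}, no independence workaround, and no use of \cref{cor:betabound} is needed here.
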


Note that the quantity estimated here can be written equivalently as
\begin{equation}
\sum_{x,y\in B} \bP_{\beta,\sigma}(x\leftrightarrow y \text{ in $\eta_B$}) = \sum_{x\in B} \bE_{\beta,\sigma}|K_B(x)\cap B|.
\end{equation}
We will deduce \cref{prop:susceptibility_upper} from the following renormalization lemma.


\begin{lemma}
\label{lem:susceptibility_renormalization}
There exists an integer $L_0=L_0(d,\alpha) \geq 2$ such that the following holds: If $L \geq L_0$ then
there exists a positive constant $a=a(d,L,\alpha)$ such that 
\[\sum_{x,y\in \sigma(B)} \bP_{\beta,\sigma}\bigl(x\leftrightarrow y \text{ in $\eta_{\sigma(B)}$}\bigr) \geq a c \beta L^{-(d+\alpha) n} \left(\sum_{x,y\in B} \bP_{\beta,\sigma}(x\leftrightarrow y \text{ in $\eta_B$})\right)^2\]
for every $0< \beta < \beta_c$, $\sigma\in \Sigma$, $n\geq 0$, and every ancestrally good $n$-block $B\in \cB^\sigma_n$.
\end{lemma}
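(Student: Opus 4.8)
The plan is to run a ``runaway observable'' argument built on the recursion $\eta_{\sigma(B)}=\eta_B\cup\omega_{\sigma(B)}$ of \eqref{eq:eta_recursion}, in which $\omega_{\sigma(B)}$ is an \emph{independent} Bernoulli percolation with kernel $H_{\sigma(B)}$, opening an edge between each pair of points in distinct children of $\sigma(B)$ with probability $1-\exp(-c\beta L^{-(d+\alpha)(n+1)})$. Write $S_D:=\sum_{x,y\in D}\bP_{\beta,\sigma}(x\leftrightarrow y\text{ in }\eta_D)$ for a block $D$. Since $B$ is good it satisfies \eqref{eq:good_def_2}, so there is a sibling $B'$ of $B$ with $S_{B'}\ge S_B$; recall that $\eta_{B'}=\eta_B$ and that $B,B'$ are disjoint children of $\sigma(B)$. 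I will bound $S_{\sigma(B)}$ below by the contribution of connections between $B$ and $B'$ made through a single edge of $\omega_{\sigma(B)}$: conditionally on $\eta_B$, any open $\omega_{\sigma(B)}$-edge from $K_B(x)\cap B$ to $K_B(y)\cap B'$ forces $x\leftrightarrow y$ in $\eta_{\sigma(B)}$, so by independence of $\omega_{\sigma(B)}$ from $\eta_B$,
\[
\bP_{\beta,\sigma}\bigl(x\leftrightarrow y\text{ in }\eta_{\sigma(B)}\mid\eta_B\bigr)\ \ge\ 1-\exp\!\Bigl(-c\beta L^{-(d+\alpha)(n+1)}\,|K_B(x)\cap B|\,|K_B(y)\cap B'|\Bigr)
\]
for every $x\in B$, $y\in B'$.

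\textbf{The main obstacle: the exponent can be large.} The quantity $c\beta L^{-(d+\alpha)(n+1)}|K_B(x)\cap B||K_B(y)\cap B'|$ need not be bounded, so $1-e^{-t}$ is not comparable to $t$; moreover $B'$ need not be ancestrally good, so \cref{prop:maximum_upper} does not directly bound $M_{B'}$. I expect this (rather than any single estimate) to be the crux, and I would resolve it by truncation together with the following observation. Since $\eta_B\subseteq\eta_{\sigma(B)}$ and $\eta_{B'}\subseteq\eta_{\sigma(B)}$ we have the pointwise domination $|K_B^{\max}|,|K_{B'}^{\max}|\le|K_{\sigma(B)}^{\max}|$, hence $M_B,M_{B'}\le M_{\sigma(B)}$; and $\sigma(B)$ \emph{is} ancestrally good because $B$ is, so \cref{prop:maximum_upper} gives $M_{\sigma(B)}^2<\frac{A}{c\beta}L^{(d+\alpha)(n+1)}$ for $L\ge L_0$. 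Now replace $|K_B(x)\cap B|$ by $|K_B(x)\cap B|\wedge CM_B$ and $|K_B(y)\cap B'|$ by $|K_B(y)\cap B'|\wedge CM_{B'}$ for a large absolute constant $C$; the resulting exponent is at most $c\beta L^{-(d+\alpha)(n+1)}C^2 M_{\sigma(B)}^2<C^2 A=:T$, an $L$-constant free of $n$ and $\beta$. Since $s\mapsto(1-e^{-s})/s$ is decreasing we get $1-e^{-t}\ge\frac{1-e^{-T}}{T}\,t$ on $[0,T]$, so, summing over ordered pairs and over $x\in B$, $y\in B'$ (and using $B\cap B'=\varnothing$),
\[
S_{\sigma(B)}\ \ge\ 2\,\frac{1-e^{-T}}{T}\,c\beta L^{-(d+\alpha)(n+1)}\ \bE_{\beta,\sigma}\!\bigl[\,\widehat W_B\,\widehat W_{B'}\,\bigr],\qquad \widehat W_D:=\sum_{x\in D}\bigl(|K_D(x)\cap D|\wedge CM_D\bigr).
\]

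\textbf{Factorising and undoing the truncation.} Writing $\widehat W_D=\sum_{K}|K\cap D|\bigl(|K\cap D|\wedge CM_D\bigr)$ over the clusters $K$ of $\eta_B$ and using that $a\mapsto a(a\wedge u)$ is superadditive (so merging clusters only increases $\widehat W_D$), each $\widehat W_D$ is an increasing function of the product measure $\eta_B$; Harris' inequality then gives $\bE_{\beta,\sigma}[\widehat W_B\widehat W_{B'}]\ge\bE_{\beta,\sigma}[\widehat W_B]\,\bE_{\beta,\sigma}[\widehat W_{B'}]$. It remains to show $\bE_{\beta,\sigma}[\widehat W_D]\ge\tfrac12 S_D$ for every block $D$ (for $D=B$ and $D=B'$): combined with $S_{B'}\ge S_B$ this yields $S_{\sigma(B)}\ge\frac{1-e^{-T}}{2T}L^{-(d+\alpha)}\cdot c\beta L^{-(d+\alpha)n}S_B^2$, i.e.\ the lemma with $a=\frac{(1-e^{-T})L^{-(d+\alpha)}}{2T}$. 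For the claim, $S_D-\bE_{\beta,\sigma}[\widehat W_D]\le\bE_{\beta,\sigma}\sum_{x\in D}|K_D(x)\cap D|\,\mathbbm1\bigl(|K_D(x)\cap D|>CM_D\bigr)$, and the rooted tail bound \eqref{eq:BigClusterRooted} (which is universal and needs no ancestral goodness) gives, for each $x$, $\bE_{\beta,\sigma}[\,|K_D(x)\cap D|\,\mathbbm1(|K_D(x)\cap D|>CM_D)\,]\le c_0\,e^{-C/9}M_D\,\bP_{\beta,\sigma}(|K_D(x)\cap D|\ge M_D)$ for an absolute $c_0$. Summing over $x$ and using $\sum_{x\in D}\bP_{\beta,\sigma}(|K_D(x)\cap D|\ge M_D)=\bE_{\beta,\sigma}\#\{x\in D:|K_D(x)\cap D|\ge M_D\}\le\tfrac1{M_D}\bE_{\beta,\sigma}\sum_{x\in D}|K_D(x)\cap D|=S_D/M_D$ gives $S_D-\bE_{\beta,\sigma}[\widehat W_D]\le c_0 e^{-C/9}S_D\le\tfrac12 S_D$ once $C$ is a large enough absolute constant. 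The only places ancestral goodness of $B$ enters are: (i) $B$ is good, giving the partner $B'$ via \eqref{eq:good_def_2}, and (ii) $\sigma(B)$ is ancestrally good, giving the uniform control on $M_{\sigma(B)}$ that tames the truncation.
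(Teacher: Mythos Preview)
Your proof is correct and follows essentially the same approach as the paper: pick the sibling $B'$ from \eqref{eq:good_def_2}, bound the $\eta_{\sigma(B)}$-connection probability from below via a single $\omega_{\sigma(B)}$-edge between truncated clusters, linearise $1-e^{-t}$, factor with Harris--FKG, and undo the truncation using the universal tightness bound together with $M_B,M_{B'}\le M_{\sigma(B)}$ and \cref{prop:maximum_upper} applied to the ancestrally good block $\sigma(B)$. The only differences are cosmetic---you truncate at $CM_D$ rather than $1/\sqrt{h}$ and prove the truncation-removal estimate directly (this is the content of the paper's \cref{lem:truncated_susceptibility})---but the structure and all essential ideas coincide.
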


Before proving this lemma we first prove the following analogue of \cite[Lemma 2.9]{hutchcrofthierarchical}. Estimates of this form follow very generally from the universal tightness theorem. We write $a \wedge b = \min\{a,b\}$.

\begin{lemma}[Truncating at the typical maximum]
\label{lem:truncated_susceptibility}
There exists a universal positive constant $a$ such that
\[
\bE_{\beta,\sigma} \left[ |K_B(x) \cap B| \wedge (\lambda M_B) \right] \geq a \lambda  \bE_{\beta,\sigma} |K_B(x) \cap B|\]
for every $\beta \geq 0$, $\sigma\in \Sigma$, $0<\lambda \leq 1$, every block $B$ and every $x\in \Z^d$.
\end{lemma}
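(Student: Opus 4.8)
The plan is to deduce the lemma directly from the two inequalities in the universal tightness theorem \eqref{eq:BigClusterUnrooted} and \eqref{eq:BigClusterRooted}, exactly in the spirit of \cite[Lemma 2.9]{hutchcrofthierarchical}. Write $X = |K_B(x)\cap B|$ for brevity, so that the claim is $\bE_{\beta,\sigma}[X \wedge (\lambda M_B)] \geq a\lambda \bE_{\beta,\sigma} X$. The point is that the truncated variable $X \wedge (\lambda M_B)$ cannot be much smaller in mean than $\lambda M_B$ times the probability that $X$ is at least of order $M_B$; and conversely the full mean $\bE_{\beta,\sigma} X$ cannot be much larger than $M_B$ times that same probability, because \eqref{eq:BigClusterRooted} says the upper tail of $X$ above $M_B$ decays exponentially once renormalized by $\bP_{\beta,\sigma}(X \geq M_B)$.

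First I would lower-bound the left-hand side. Since $0<\lambda\le 1$, on the event $\{X \geq M_B\}$ we have $X \wedge (\lambda M_B) = \lambda M_B$, hence
\[
\bE_{\beta,\sigma}\left[X \wedge (\lambda M_B)\right] \geq \lambda M_B \,\bP_{\beta,\sigma}(X \geq M_B).
\]
Next I would upper-bound the right-hand side. Using \eqref{eq:BigClusterRooted} with the layer-cake formula,
\[
\bE_{\beta,\sigma} X = \int_0^\infty \bP_{\beta,\sigma}(X \geq t)\dif t \leq M_B + \int_{M_B}^\infty \bP_{\beta,\sigma}(X \geq t)\dif t,
\]
and on the tail integral substitute $t = \lambda' M_B$ and apply \eqref{eq:BigClusterRooted}, which gives $\bP_{\beta,\sigma}(X \geq \lambda' M_B) \leq e\,\bP_{\beta,\sigma}(X \geq M_B)e^{-\lambda'/9}$ for $\lambda'\geq 1$; integrating yields $\int_{M_B}^\infty \bP_{\beta,\sigma}(X\geq t)\dif t \leq 9e\, M_B\, \bP_{\beta,\sigma}(X\geq M_B)$. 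For the leading term $M_B$ I would note that $\bP_{\beta,\sigma}(X\geq M_B)$ is bounded below by a universal constant: indeed $\{X \geq M_B\} \supseteq \{|K_B^{\max}|\ge M_B\}\cap\{x\in D\}$-type reasoning is awkward, so instead I would simply combine $\bP_{\beta,\sigma}(|K_B^{\max}| \geq M_B) \geq$ a universal constant (which follows from the definition of $M_B$ together with \eqref{eq:BigClusterUnrooted}, since $M_B$ is essentially the median of $|K_B^{\max}|$) with \eqref{eq:BigClusterRooted} itself at $\lambda = 1$ to write $M_B \leq C\, M_B\,\bP_{\beta,\sigma}(X \geq M_B)$ — wait, that is circular for a fixed $x$. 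The cleaner route, which I would actually take, is: it suffices to prove the inequality when $\bP_{\beta,\sigma}(X\ge M_B)>0$, and then the above two displays give
\[
\bE_{\beta,\sigma}[X\wedge(\lambda M_B)] \geq \lambda M_B\,\bP_{\beta,\sigma}(X\ge M_B) \geq \frac{\lambda}{1+9e}\,\bE_{\beta,\sigma} X,
\]
provided we also control the stray $M_B$ term; so the remaining issue is exactly to absorb the bare $M_B$ in the bound $\bE_{\beta,\sigma} X \leq M_B + 9e M_B\bP_{\beta,\sigma}(X\ge M_B)$ into a constant times $M_B\bP_{\beta,\sigma}(X\ge M_B)$.

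The main obstacle is therefore the following: lower-bounding $\bP_{\beta,\sigma}(|K_B(x)\cap B| \geq M_B)$ by a universal constant times something comparable, or equivalently showing $M_B \leq C\, M_B\,\bP_{\beta,\sigma}(X\ge M_B)$ for the specific rooted quantity. I expect this is handled exactly as in \cite[Lemma 2.9]{hutchcrofthierarchical}: either (i) if $\bP_{\beta,\sigma}(X \geq M_B) \geq 1/(18e)$ (say) then the bare $M_B$ is at most $18e\, M_B\,\bP_{\beta,\sigma}(X\geq M_B)$ and we are done with $a = 1/(18e + 9e\cdot 18e)$-type constant; or (ii) if $\bP_{\beta,\sigma}(X \geq M_B) < 1/(18e)$ then by \eqref{eq:BigClusterRooted} with the layer-cake formula starting the tail integral at $0$ rather than $M_B$, one gets $\bE_{\beta,\sigma} X = \int_0^\infty \bP_{\beta,\sigma}(X\ge t)\dif t \leq \int_0^{M_B}\bP_{\beta,\sigma}(X\ge t)\dif t + 9e M_B \bP_{\beta,\sigma}(X\ge M_B)$; but the first integral is at most $M_B$ and in this regime one must instead observe that $\bP_{\beta,\sigma}(X \geq t)$ for $t \leq M_B$ is controlled by comparison with the unrooted event via $\{|K_B(x)\cap B|\ge t\}$, ultimately yielding $\bE_{\beta,\sigma} X \leq C M_B \bP_{\beta,\sigma}(X \geq M_B)$. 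Either way the constant is universal (independent of $\beta$, $\sigma$, $\lambda$, $B$, $x$), and combining with the left-hand bound $\bE_{\beta,\sigma}[X\wedge(\lambda M_B)] \geq \lambda M_B\bP_{\beta,\sigma}(X\ge M_B)$ produces the claimed inequality with a universal $a$. I would keep the bookkeeping minimal and cite \cite[Lemma 2.9]{hutchcrofthierarchical} for the identical estimate where convenient.
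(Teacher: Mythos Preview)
Your proposal has a genuine gap. You correctly identify the obstacle: after writing $\bE_{\beta,\sigma} X \leq M_B + 9e\,M_B\,\bP_{\beta,\sigma}(X\ge M_B)$, you need to absorb the bare $M_B$ into a constant times $M_B\,\bP_{\beta,\sigma}(X\ge M_B)$. But your case (ii) does not do this. The assertion ``ultimately yielding $\bE_{\beta,\sigma} X \leq C M_B\,\bP_{\beta,\sigma}(X\ge M_B)$'' is simply false in general: nothing in \eqref{eq:BigClusterUnrooted} or \eqref{eq:BigClusterRooted} gives a \emph{lower} bound on the rooted probability $\bP_{\beta,\sigma}(|K_B(x)\cap B|\ge M_B)$ for a fixed vertex $x$, and indeed this probability can be arbitrarily small (think of $x$ whose cluster in $\eta_B$ meets $B$ in far fewer than $M_B$ points). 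Your lower bound $\bE_{\beta,\sigma}[X\wedge(\lambda M_B)]\ge \lambda M_B\,\bP_{\beta,\sigma}(X\ge M_B)$ is then useless on its own.

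The missing idea is the elementary pointwise scaling
\[
X \wedge (\lambda M_B) \;\ge\; \frac{\lambda}{c}\bigl(X \wedge (cM_B)\bigr)
\qquad\text{for any }c\ge 1,\ 0<\lambda\le 1,
\]
which avoids the rooted probability entirely. The paper uses exactly this: it first shows via \eqref{eq:BigClusterRooted} that the tail of $X$ above $99M_B$ contributes at most half of $\bE_{\beta,\sigma} X$, so $\bE_{\beta,\sigma}[X\wedge (100 M_B)]\ge \tfrac12\bE_{\beta,\sigma} X$, and then applies the scaling with $c=100$ to get $\bE_{\beta,\sigma}[X\wedge(\lambda M_B)] \ge \tfrac{\lambda}{100}\bE_{\beta,\sigma}[X\wedge(100 M_B)] \ge \tfrac{\lambda}{200}\bE_{\beta,\sigma} X$. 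Once you have this scaling inequality, you could equally well combine it (at $c=1$) with your own tail estimate to obtain $(1+9e)\bE_{\beta,\sigma}[X\wedge(\lambda M_B)] \ge \lambda\bE_{\beta,\sigma}[X\wedge M_B] + 9e\,\lambda M_B\,\bP_{\beta,\sigma}(X\ge M_B) \ge \lambda\,\bE_{\beta,\sigma} X$; but as written your argument does not reach this point.
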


\begin{proof}[Proof of \cref{lem:truncated_susceptibility}]
Fix $\beta$, $B$, and $x$ and write $K=K_B(x)\cap B$ and $M=M_B$. For each integer $N$ we have that
\[
\bE_{\beta,\sigma} |K| = \sum_{k = 1}^\infty \P_{\beta,\sigma}(|K| \geq k) \qquad \text{ and } \qquad \bE_{\beta,\sigma} \left[|K| \wedge N\right] = \sum_{k = 1}^N \bP_{\beta,\sigma}(|K| \geq k),
\]
and hence by the universal tightness theorem as stated in \eqref{eq:BigClusterRooted} that if $N \geq M$ then
\begin{align*}
\bE_{\beta,\sigma} |K| - \bE_{\beta,\sigma} \left[|K| \wedge N\right] = \sum_{k=N+1}^\infty \bP_{\beta,\sigma}(|K| \geq k)
&\leq e \bP_{\beta,\sigma}(|K| \geq M) \sum_{k= N+1}^\infty e^{-k/(9M)}.
\end{align*}
It follows by Markov's inequality that if $N \geq 99 M$ then
\begin{align*}
\bE_{\beta,\sigma} |K| - \bE_{\beta,\sigma} \left[|K| \wedge N\right] &\leq \frac{e^{1-11}}{1-e^{-1/(9M)}} \bP_{\beta,\sigma}(|K| \geq M) \\&\leq \frac{e^{-10}}{(1-e^{-1/(9M)}) M} \bE_{\beta,\sigma}|K| \leq \frac{1}{2} \bE_{\beta,\sigma} |K|,
\end{align*}
where the final inequality follows by calculus since $M \geq 2$. (Indeed, the optimal constant here is much smaller than $1/2$.) We deduce that if $N\geq 99M$ then
\begin{equation}
\bE_{\beta,\sigma} \left[|K| \wedge N\right] \geq \frac{1}{2} \bE_\beta |K|
\end{equation}
and hence that
\[
\bE_{\beta,\sigma} \left[|K| \wedge (\lambda M) \right] \geq \frac{\lambda}{100} \bE_{\beta,\sigma} \left[|K| \wedge (100 M) \right] \geq \frac{\lambda}{100} \bE_{\beta,\sigma} \left[|K| \wedge \lceil 99 M\rceil \right] \geq \frac{\lambda}{200} \bE_\beta |K|
\]
for every $0\leq \lambda \leq 1$ as claimed.
\end{proof}

\begin{proof}[Proof of \cref{lem:susceptibility_renormalization}]
Let $L\geq L_0$ where $L_0\geq 2$ is as in \cref{prop:maximum_upper}. 
Write $h=c\beta L^{-(d+\alpha)(n+1)}$, so that each two distinct vertices of $\sigma(B)$ are connected by an edge of $\omega_{\sigma(B)}$ with probability $1-e^{-h}$. Let $B$ be an ancestrally good $n$-block for some $n\geq 0$. Since $B$ is good there exists a sibling $B'$ of $B$ such that
\[\sum_{x\in B'} \bE_{\beta,\sigma}|K_{B'}(x)\cap B'| \geq \sum_{x\in B} \bE_{\beta,\sigma}|K_{B}(x)\cap B|. \]
Let $\cF$ be the sigma-algebra generated by $\eta_B$, which we recall is equal to $\eta_{B'}$, and consider the random collections of sets
\[
\sC = \{ K \cap B : K \text{ is a cluster of $\eta_B$}\} \qquad \text{ and } \qquad \sC' = \{ K \cap B' : K \text{ is a cluster of $\eta_{B'}$}\},
\]
each of which are $\cF$-measurable and satisfy
\[
 \sum_{x\in B} |K_{B}(x)\cap B| = \sum_{C \in \sC} |C|^2  \qquad \text{ and } \qquad \sum_{x\in B'} |K_{B'}(x)\cap B'| = \sum_{C' \in \sC'} |C'|^2.
\]
For each $x\in B$ write $C(x)=K_B(x) \cap B$. For each $x\in B$, conditional on $\cF$, each set $C'\in \sC'$ is connected to $C(x)$ by an edge of $\omega_{\sigma(B)}$ with probability $1-\exp(-h |C'| \cdot |C(x)|)$ so that
\begin{align}
\bE_{\beta,\sigma}\left[|K_{\sigma(B)}(x) \cap \sigma(B)| \mid \cF\right] &\geq \sum_{C' \in \sC'} |C'| \left[1-\exp\left(-h|C'| \cdot |C(x)|\right)\right]
\nonumber\\
&\geq \sum_{C' \in \sC'} |C'| \left[1-\exp\left(-h\left(\frac{1}{\sqrt{h}}\wedge |C'|\right)\left(\frac{1}{\sqrt{h}} \wedge |C(x)|\right)\right)\right]
\end{align}
where the final inequality follows trivially from the fact that $e^t$ is an increasing function of $t$.
Using that  $1-e^{-t} \geq (1-e^{-1}) t$ for $0\leq t\leq 1$ it follows that
\begin{align}
\bE_{\beta,\sigma}\left[|K_{\sigma(B)}(x) \cap \sigma(B)| \mid \cF\right] 
&\geq (1-e^{-1})h  \sum_{C' \in \mathscr{C}'} |C'| \left(\frac{1}{\sqrt{h}} \wedge |C'|\right)\left(\frac{1}{\sqrt{h}} \wedge |C(x)|\right).
\end{align}
Noting that $\sum_{C' \in \mathscr{C}'} |C'| (h^{-1/2} \wedge |C'|)$ is an increasing function of the percolation configuration $\eta_{B'}=\eta_B$, we may apply the Harris-FKG inequality to take expectations and deduce that
%
\begin{align*}
\bE_{\beta,\sigma}|K_{\sigma(B)}(x) \cap \sigma(B)| 
&\geq  (1-e^{-1})h \bE_{\beta,\sigma}\left[\frac{1}{\sqrt{h}} \wedge |C(x)|\right] \bE_{\beta,\sigma}\left[\sum_{C' \in \sC'} |C'| \left(\frac{1}{\sqrt{h}} \wedge |C'|\right)\right]\\
&= (1-e^{-1})h \bE_{\beta,\sigma}\left[\frac{1}{\sqrt{h}} \wedge |C(x)|\right] \sum_{y\in B'}\bE_{\beta,\sigma}\left[\frac{1}{\sqrt{h}} \wedge |K_{B'}(y)\cap B'|\right]
\end{align*}
for every $x\in B$ and hence that
\begin{multline*}
\sum_{x\in B} \bE_{\beta,\sigma}|K_{\sigma(B)}(x) \cap \sigma(B)| 
\\\geq (1-e^{-1})h \sum_{x\in B}  \bE_{\beta,\sigma}\left[\frac{1}{\sqrt{h}} \wedge |K_{B}(x)\cap B|\right] \sum_{y\in B'}\bE_{\beta,\sigma}\left[\frac{1}{\sqrt{h}} \wedge |K_{B'}(y)\cap B'|\right].
\end{multline*}
Since $B$ is ancestrally good, $\sigma(B)$ is ancestrally good also. Thus, since $L\geq L_0$ and $M_{B'}\leq M_{\sigma(B)}$, we have by \cref{prop:maximum_upper}  that there exists a constant $A=A(d,L,\alpha)$ such that $M_B$ and $M_{B'}$ are both at most $\sqrt{\frac{A}{c\beta} L^{(d+\alpha)(n+1)}}$. Since this upper bound is of the same order as $1/\sqrt{h}$, it follows from this and \cref{lem:truncated_susceptibility} that there exists a positive constant $a=a(d,L,\alpha)$ such that
\begin{align*}
\sum_{x\in B} \bE_{\beta,\sigma}|K_{\sigma(B)}(x) \cap \sigma(B)| 
&\geq a h \sum_{x\in B}  \bE_{\beta,\sigma} |K_{B}(x)\cap B| \sum_{y\in B'}\bE_{\beta,\sigma} |K_{B'}(y)\cap B'|\\
&
\geq a h \left(\sum_{x\in B}  \bE_{\beta,\sigma} |K_{B}(x)\cap B|\right)^2,
\end{align*}
where the second inequality follows by choice of $B'$. This implies the claim.
\end{proof}

We next deduce \cref{prop:susceptibility_upper} from \cref{lem:maximum_renormalization}.

\begin{proof}[Proof of \cref{prop:susceptibility_upper}]
Let $a$ and $L_0$ be as in  \cref{lem:susceptibility_renormalization}, let $\beta<\beta_c$ and suppose that $L \geq L_0$. We will prove the claim with $A=a^{-1}L^{2(d+\alpha)}$. Fix $\sigma\in \Sigma$, let $B=B_n$ be an ancestrally good $n$-block for some $n\geq 0$, and for each $m\geq n$ let $B_m$ be the unique $m$-block that is an ancestor of $B_n$. Suppose for contradiction that 
\[
\sum_{x\in B_n}  \bE_{\beta,\sigma} |K_{B_n}(x)\cap B_n| \geq \frac{A}{c\beta} L^{(d+\alpha)n}.
\]
Applying \cref{lem:susceptibility_renormalization} inductively yields  that
\begin{align}
\sum_{x\in B_{m}}  \bE_{\beta,\sigma} |K_{B_m}(x)\cap B_m| &\geq ac \beta L^{-(d+\alpha)m} \left(\sum_{x\in B_{m-1}}  \bE_{\beta,\sigma} |K_{B_{m-1}}(x)\cap B_{m-1}|\right)^2
\nonumber
\\
&
\geq \frac{a c \beta A^2}{c^2 \beta^2 L^{2(d+\alpha)}} L^{(d+\alpha)m} = \frac{A}{c\beta}L^{(d+\alpha)m}
\label{eq:susceptibility_contradiction1}
\end{align}
for every $m\geq n$. On the other hand, we can also use translation-invariance of $J$ to bound
\begin{equation}
\sum_{x\in B_{m}}  \bE_{\beta,\sigma} |K_{B_m}(x)\cap B_m| \leq L^{dm}\bE_\beta |K(0)|
\label{eq:susceptibility_contradiction2}
\end{equation}
where $K(0)$ is the entire cluster of the origin in the full percolation configuration. Since $J$ is translation-invariant and $\beta<\beta_c$, we have by sharpness of the phase transition \cite{duminil2015new,aizenman1987sharpness,1901.10363} that $\bE_\beta |K_0|<\infty$ and hence that the two estimates \eqref{eq:susceptibility_contradiction1} and \eqref{eq:susceptibility_contradiction2} contradict each other for large values of $m$.
\end{proof}

\subsection{Proof of the main theorem}

In this section we deduce \cref{thm:main} from \cref{prop:susceptibility_upper}.
The first step of the proof is the same as in the deduction of \cite[Proposition 2.10]{hutchcrofthierarchical} from \cite[Proposition 2.7]{hutchcrofthierarchical}, and is where we benefit most from the precise way we set up our renormalization scheme.

\begin{lemma}
\label{lem:fullspace_recursion}
Let $\sigma \in \Sigma$, let $B_n$ be an $n$-block for some $n\geq 0$, and for each $m> n$ let $B_m$ be the unique $m$-block that is an ancestor of $B_n$. Then
\begin{multline*}
\bP_{\beta}\left(x\leftrightarrow y\right) \leq \bP_{\beta}\left(x\leftrightarrow y \text{ in $\eta_{B_n}$}\right)
\\ + c\beta \sum_{m=n+1}^\infty L^{-(d+\alpha)m}\sum_{a,b\in B_m} \bP_{\beta,\sigma}(x \leftrightarrow a \text{ in $\eta_{B_m}$}) \bP_{\beta,\sigma}(y \leftrightarrow b \text{ in $\eta_{B_m}$}).
\end{multline*}
\end{lemma}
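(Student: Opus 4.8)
The plan is to decompose the connection event $\{x \leftrightarrow y\}$ according to the smallest scale at which the path from $x$ to $y$ uses a hierarchical edge that is ``forbidden'' in $\eta_{B_n}$ --- i.e., an edge coming from $\omega_{B_m}$ for some ancestor $B_m$ of $B_n$. Recall from \eqref{eq:eta_recursion} that $\eta_{B_{m}} = \eta_{B_{m-1}} \cup \omega_{B_m}$, so the configurations $\eta_{B_n} \subseteq \eta_{B_{n+1}} \subseteq \cdots$ form an increasing family whose union is the full configuration $\omega$ (since every hierarchical block is either a descendant of some $B_m$, hence eventually included, or not an ancestor of $B_n$, hence already in $\eta_{B_n}$; and $\omega_R \subseteq \eta_{B_n}$). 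Therefore, on the event $\{x\leftrightarrow y\}$, either $x\leftrightarrow y$ already in $\eta_{B_n}$, or there is a well-defined minimal $m > n$ such that $x \leftrightarrow y$ in $\eta_{B_m}$ but not in $\eta_{B_{m-1}} = \eta_{B_m} \setminus \omega_{B_m}$.

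First I would handle this last case: if $x\leftrightarrow y$ in $\eta_{B_m}$ but not in $\eta_{B_m}$ with $\omega_{B_m}$ removed, then some edge $\{a,b\}$ of $\omega_{B_m}$ is ``pivotal-like'' in the sense that $x \leftrightarrow a$ and $y \leftrightarrow b$ in $\eta_{B_m} \setminus \omega_{B_m} \subseteq \eta_{B_m}$ (after relabelling $a,b$). A union bound over the choice of this edge, using independence of $\omega_{B_m}$ from $\eta_{B_{m-1}}$, gives that the probability of this case is at most
\[
\sum_{a,b \in B_m} \bP_{\beta,\sigma}\bigl(1-e^{-\beta H_{B_m}(a,b)}\bigr)\,\bP_{\beta,\sigma}\bigl(x\leftrightarrow a \text{ and } y \leftrightarrow b \text{ in } \eta_{B_m}\setminus\omega_{B_m}\bigr),
\]
and since $H_{B_m}(a,b) \le c L^{-(d+\alpha)m}$ and $1-e^{-t}\le t$, and since $\eta_{B_m}\setminus\omega_{B_m} \subseteq \eta_{B_m}$ so that $\{x\leftrightarrow a\}$ and $\{y\leftrightarrow b\}$ in $\eta_{B_m}\setminus\omega_{B_m}$ implies the same in $\eta_{B_m}$, this is bounded by $c\beta L^{-(d+\alpha)m}\sum_{a,b\in B_m}\bP_{\beta,\sigma}(x\leftrightarrow a \text{ in }\eta_{B_m})\bP_{\beta,\sigma}(y\leftrightarrow b \text{ in }\eta_{B_m})$, using independence of the two connection events once the conditioning is dropped --- or more carefully, using that the pair of events $\{x\leftrightarrow a\}$, $\{y\leftrightarrow b\}$ are increasing and a Harris-type split, or simply bounding the joint probability by the product after noting the edge $\{a,b\}$ itself is excluded. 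Summing over $m > n$ and adding the term $\bP_\beta(x\leftrightarrow y \text{ in }\eta_{B_n})$ for the first case yields the claimed inequality.

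The main obstacle is the bookkeeping around the step ``$x\leftrightarrow a$ and $y\leftrightarrow b$ jointly implies the product bound'': one must be careful that the edge $\{a,b\}$ witnessing the jump at scale $m$ is genuinely an edge of $\omega_{B_m}$ (so that $a,b \in B_m$ with $h_\sigma(a,b)=m$, forcing $a,b$ to lie in \emph{different} children of $B_m$), and that after removing $\omega_{B_m}$ the two connection events are measurable with respect to $\eta_{B_{m-1}}$, which is independent of $\omega_{B_m}$. The cleanest route is: condition on $\eta_{B_{m-1}}$, note the conditional probability of creating the connection via $\omega_{B_m}$ is at most $\sum_{a,b} (1-e^{-\beta H_{B_m}(a,b)}) \Ind{x\leftrightarrow a}\Ind{y \leftrightarrow b}$ (with the connections in $\eta_{B_{m-1}}$, hence also in $\eta_{B_m}$), take expectations, drop the indicator correlation upward to $\eta_{B_m}$, and then --- since in $\eta_{B_m}$ the events $\{x\leftrightarrow a\}$ and $\{y\leftrightarrow b\}$ need \emph{not} be independent --- instead keep the sum as a sum over pairs and observe it is already in the stated form once one bounds the joint probability by $\bP_{\beta,\sigma}(x\leftrightarrow a \text{ in }\eta_{B_m})\bP_{\beta,\sigma}(y\leftrightarrow b\text{ in }\eta_{B_m})$; this last bound is the only genuinely lossy step and follows because we may instead work with $\eta_{B_{m-1}}$-measurable events, which split as a product conditionally and, after removing the (now irrelevant) edge, can be compared to the unconditioned $\eta_{B_m}$-connection probabilities. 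I would double-check this comparison carefully, as it is exactly the place where the precise definition of $\eta_B$ (including many far-away edges) is being exploited.
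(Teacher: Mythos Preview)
Your scale decomposition is exactly right and matches the paper: write $\{x\leftrightarrow y\}$ as the union over $m\geq n$ of $\{x\leftrightarrow y$ in $\eta_{B_m}\}$ and peel off the first $m$ at which the connection appears. The gap is in the factorization step.

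First, the assertion that on $\{x\leftrightarrow y$ in $\eta_{B_m}$, $x\nleftrightarrow y$ in $\eta_{B_{m-1}}\}$ there exist $a,b$ with $x\leftrightarrow a$ \emph{and} $y\leftrightarrow b$ \emph{both in} $\eta_{B_{m-1}}$ and $\{a,b\}$ open in $\omega_{B_m}$ is false: a connecting path in $\eta_{B_m}$ may use several edges of $\omega_{B_m}$, linking a chain of distinct $\eta_{B_{m-1}}$-clusters, so that no single $\omega_{B_m}$-edge has both its endpoints already connected to $x$ and $y$ respectively in $\eta_{B_{m-1}}$. Taking the first such edge on a path gives $x\leftrightarrow a$ in $\eta_{B_{m-1}}$ but only $y\leftrightarrow b$ in $\eta_{B_m}$, so your independence-of-$\omega_{B_m}$-from-$\eta_{B_{m-1}}$ plan cannot be applied to both connection events simultaneously.

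Second, even after (correctly) placing the connection events in $\eta_{B_m}$, none of the devices you list yields the product bound: Harris--FKG gives $\bP(A\cap B)\geq \bP(A)\bP(B)$ for increasing $A,B$, which is the wrong direction, and the events $\{x\leftrightarrow a\}$, $\{y\leftrightarrow b\}$ in any single configuration are positively correlated, not independent. What you are missing is the BK inequality. The paper's argument is: on the event in question, pick any open path from $x$ to $y$ in $\eta_{B_m}$ and let $\{a,b\}$ be any $\omega_{B_m}$-edge on it; then the three events $\{x\leftrightarrow a$ in $\eta_{B_m}\}$, $\{\{a,b\}$ open in $\omega_{B_m}\}$, $\{b\leftrightarrow y$ in $\eta_{B_m}\}$ occur \emph{disjointly} (the path itself furnishes edge-disjoint witnesses), and BK together with a union bound over $a,b\in B_m$ gives exactly the product in the statement. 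Replace your independence/Harris discussion with this disjoint-occurrence observation and the proof goes through in one line.
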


The proof of this lemma will employ the \emph{BK inequality} and the related notion of the \emph{disjoint occurrence} of events; see e.g.\ \cite[Chapter 2.3]{grimmett2010percolation} for relevant background.

\begin{proof}[Proof of \cref{lem:fullspace_recursion}]
For each $m\geq n$ write $\eta_m=\eta_{B_m}$ and $K_m(x)=K_{B_m}(x)$, so that we can write $\{x\leftrightarrow y$ in $\eta_m\}=\{y\in K_m(x)\}$.
Since $\omega = \bigcup_{m\geq n} \eta_m$ we can write
\[
\bP_{\beta,\sigma}(y\in K(x)\setminus K_n(x)) = \sum_{m=n+1}^\infty \bP_{\beta,\sigma}\left(y\in K_{m}(x)\setminus K_{{m-1}}(x)\right).
\]
If $x$ is connected to $y$ in $\eta_{m}$ but not $\eta_{{m-1}}$ then every path connecting $x$ and $y$ in $\eta_m$ must include an edge of $\omega_{B_m}$. As such, on this event there must exist a pair of points $a,b \in B_m$ such that the events $\{\{a,b\}$ is open in $\omega_{B_m}\}$, $\{a\in K_{m}(x)\}$, and $\{b\in K_{m}(y)\}$  all occur disjointly. Applying a union bound and the BK inequality and using that any two vertices of $B_m$ are connected by an edge of $\omega_{B_m}$ with probability at most $c \beta L^{-(d+\alpha)m}$ yields the claim.
\end{proof}

If we were working on the hierarchical lattice, the symmetries of the model would allow us to very easily reach our desired conclusions on the unrestricted two-point function from \cref{prop:susceptibility_upper} and \cref{lem:fullspace_recursion} by direct summation, as we did in \cite{hutchcrofthierarchical}. In our setting however things are rather more subtle since including the hierarchical structure breaks transitivity and \cref{prop:susceptibility_upper} does not obviously give sharp control of sums  of the form $\sum_{x,y\in B_n}\sum_{a,b\in B_m} \bP_{\beta,\sigma}(x \leftrightarrow a \text{ in $\eta_{B_m}$}) \bP_{\beta,\sigma}(y \leftrightarrow b \text{ in $\eta_{B_m}$})$ even when $B_n$ is ancestrally good. In order to circumvent this issue we will use the fact that every ancestrally good block has many ancestrally good descendants to establish via a further compactness argument the existence of a hierarchical decomposition $\sigma$ where these sums can be controlled.

\medskip

 Given $\sigma\in \Sigma$, $\beta\geq 0$, and an $n$-block $B_n \in \cB^\sigma_n$ for some $n\geq 0$, we write $B_{n+k}$ for the unique $(n+k)$-block containing $B_n$ and define
\[
\mathbf{T}_k(B_n) =  \mathbf{T}_{k,\beta,\sigma}(B_n) = \sum_{x\in B_n} \sum_{y\in B_{n+k}} \bP_{\beta,\sigma} (x \leftrightarrow y 
\text{ in $\eta_{B_{n+k}}$}) \qquad \text{ for each $k\geq 0$},
\]
 so that summing the estimate of \cref{lem:fullspace_recursion} over $x,y\in B_n$ yields in this notation that
\begin{equation}
\label{eq:Rk_expansion}
\sum_{x,y\in B_n}\bP_{\beta}\left(x\leftrightarrow y\right) \leq \mathbf{T}_0(B_n) + c\beta L^{-(d+\alpha)n} \sum_{k=1}^\infty L^{-(d+\alpha)k}\mathbf{T}_k(B_n)^2
\end{equation}
for every $n$-block $B_n$. The first term is bounded directly by \cref{prop:susceptibility_upper}. The following lemma applies \cref{prop:susceptibility_upper} to give a non-sharp bound of reasonable order on $\mathbf{T}_k(B_n)$ for certain well-chosen $n$-blocks.


\begin{lemma}
\label{lem:excellent_block}
There exists an integer $L_0=L_0(d,\alpha) \geq 2$ such that the following holds: If $L \geq L_0$ then
there exists a constant $A=A(d,L,\alpha)$ such that if $\sigma\in \Sigma$ and $B_{n+\ell}$ is an ancestrally good $(n+\ell)$-block for some $n,\ell\geq 0$, then  there exists an ancestrally good $n$-block $B_n$ descended from $B_{n+\ell}$ such that
\[
\mathbf{T}_k(B_n) \leq \frac{A}{c\beta} 4^k L^{(d+\alpha)n + \alpha k}
\]
for every $0\leq k \leq \ell$.
\end{lemma}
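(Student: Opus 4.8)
The plan is to select $B_n$ by a counting argument inside the tree of blocks descended from $B_{n+\ell}$, feeding in \cref{prop:susceptibility_upper} one scale at a time. The two structural inputs are: (i) by \cref{lem:good_children}, every ancestrally good non-singleton block has at least $\tfrac12 L^d$ children that are good, hence ancestrally good; and (ii) if $B$ is an ancestrally good $n$-block then, for every $k\ge0$, the $(n+k)$-block $C$ containing $B$ is again ancestrally good, since $C$ and all ancestors of $C$ are either ancestors of $B$ or equal to $B$. Write $\cG_m$ for the set of ancestrally good $m$-blocks contained in $B_{n+\ell}$, so that $\cG_{n+\ell}=\{B_{n+\ell}\}$. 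Iterating (i), every $C\in\cG_{n+k}$ has at least $\lceil L^d/2\rceil^k\ge L^{dk}/2^k$ descendants at level $n$ lying in $\cG_n$; and by (ii), sending each $B\in\cG_n$ to the $(n+k)$-block containing it defines a map $\cG_n\to\cG_{n+k}$, so that $\{\,\{B\in\cG_n:B\subseteq C\}:C\in\cG_{n+k}\,\}$ is a partition of $\cG_n$.

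The core step is a Markov bound carried out separately at each scale $0\le k\le\ell$. Let $A_0=A_0(d,L,\alpha)$ and $L_0=L_0(d,\alpha)$ be the constant and threshold from \cref{prop:susceptibility_upper}, assume $L\ge L_0$, and set $A:=2A_0$. Fix $k$ and $C\in\cG_{n+k}$. Since the $(n+k)$-block containing any $n$-block $B\subseteq C$ is $C$ itself, one has $\sum_{B\subseteq C}\mathbf{T}_k(B)=\sum_{x,y\in C}\bP_{\beta,\sigma}(x\leftrightarrow y\text{ in }\eta_C)$, where the sum runs over the $L^{dk}$ $n$-blocks $B\subseteq C$; this is strictly less than $\tfrac{A_0}{c\beta}L^{(d+\alpha)(n+k)}$ by \cref{prop:susceptibility_upper} applied to the ancestrally good block $C$. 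As $\mathbf{T}_k\ge0$, Markov's inequality then gives that strictly fewer than $\tfrac{A_0}{A}\cdot\tfrac{L^{dk}}{4^k}=\tfrac{L^{dk}}{2\cdot 4^k}$ of these $B$ satisfy $\mathbf{T}_k(B)\ge\tfrac{A}{c\beta}4^kL^{(d+\alpha)n+\alpha k}$. Since $\tfrac{L^{dk}}{2\cdot 4^k}\le\tfrac1{2^{k+1}}\cdot\tfrac{L^{dk}}{2^k}\le\tfrac1{2^{k+1}}\#\{B\in\cG_n:B\subseteq C\}$, summing over $C\in\cG_{n+k}$ and invoking the partition of $\cG_n$ shows that the bad set $\mathrm{Bad}_k:=\{B\in\cG_n:\mathbf{T}_k(B)\ge\tfrac{A}{c\beta}4^kL^{(d+\alpha)n+\alpha k}\}$ has $|\mathrm{Bad}_k|<2^{-(k+1)}|\cG_n|$.

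To conclude I would union-bound over the scales: $\bigl|\bigcup_{k=0}^\ell\mathrm{Bad}_k\bigr|\le\sum_{k=0}^\ell|\mathrm{Bad}_k|<\bigl(\sum_{k=0}^\ell 2^{-(k+1)}\bigr)|\cG_n|=(1-2^{-\ell-1})|\cG_n|<|\cG_n|$, so there exists $B_n\in\cG_n$ avoiding every $\mathrm{Bad}_k$. Such a $B_n$ is an ancestrally good $n$-block descended from $B_{n+\ell}$ with $\mathbf{T}_k(B_n)<\tfrac{A}{c\beta}4^kL^{(d+\alpha)n+\alpha k}$ for all $0\le k\le\ell$, which is the assertion (with $A=2A_0$ and the same $L_0$ as in \cref{prop:susceptibility_upper}).

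The one delicate point — and the only real obstacle — is making the sum over the $\ell+1$ scales cost an $\ell$-independent constant. A crude union bound with a scale-independent threshold would lose a factor of order $2^\ell$, since $\cG_n$ comprises only a $2^{-\ell}$ fraction of all level-$n$ blocks in $B_{n+\ell}$. This is exactly why the Markov count above is localized inside each ancestrally good $(n+k)$-block $C$: there the ancestrally good $n$-descendants already form a $2^{-k}$ fraction of all $n$-blocks in $C$, so the geometric slack $4^k$ built into the target bound turns into bad sets of density $2^{-(k+1)}$ within $\cG_n$, and these densities are summable uniformly in $\ell$.
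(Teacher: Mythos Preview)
Your proof is correct and follows essentially the same approach as the paper's: a Markov-type count at each scale $k$ showing that the set of ancestrally good $n$-blocks with large $\mathbf{T}_k$ has density at most $2^{-k}$ (or $2^{-(k+1)}$ in your version) inside $\cG_n$, followed by a union bound over $k$. The only cosmetic differences are that you localize the Markov count inside each $C\in\cG_{n+k}$ rather than averaging globally over all $n$-blocks descended from ancestrally good $(n+k)$-blocks, and you fold the case $k=0$ into the union bound rather than handling it separately via a direct application of \cref{prop:susceptibility_upper}.
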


Note that, once \cref{thm:main} is established, we will deduce \emph{a posteriori} that the same estimate holds without the $4^k$ term.
When applying this lemma we will take $L$ sufficiently large that $4^k\leq L^{\delta k}$ for an appropriately small $\delta>0$.

\begin{proof}[Proof of \cref{lem:excellent_block}]
Let $\sigma\in \Sigma$, let $n,\ell \geq 0$, and let $B_{n+\ell}$ be an ancestrally good $(n+\ell)$-block. 
 For each $0\leq k \leq \ell$ let $\sA_k$ be the collection of all $(n+k)$-blocks descended from $B_{n+\ell}$ that are ancestrally good and let $\sB_k$ be the collection of all $n$-blocks descended from blocks of $\sA_k$, so that $\sB_\ell$ is the collection of all $n$-blocks descended from $B_{n+\ell}$ and $\sB_0=\sA_0$ is the collection of all $n$-blocks descended from $B_{n+\ell}$ that are ancestrally good.
For each $0\leq k \leq \ell$  we have by the definitions that
\begin{equation}
\label{eq:BkAk_identity}
\sum_{B \in \sB_{k}} \mathbf{T}_k(B) = \sum_{B \in \sA_k} \mathbf{T}_0(B).
\end{equation}
Meanwhile, since every non-singleton block has at least $L^d/2$ children that are good, every block in $\sA_k$ is an ancestor of at least $2^{-k}L^{dk}$ blocks in $\sB_0$. Since $\sB_0 \subseteq \sB_k$ it follows that
\begin{equation}
\frac{1}{|\sB_0|} \sum_{B \in \sB_{0}} \mathbf{T}_k(B) \leq \frac{|\sB_k|}{|\sB_0|} \cdot \frac{1}{|\sB_k|} \sum_{B \in \sB_{k}}\mathbf{T}_k(B) \leq \frac{2^k}{|\sB_k|} \sum_{B \in \sB_{k}}\mathbf{T}_k(B)
\end{equation}
for each $1\leq k \leq \ell$. 
Letting $\sD_k=\{B\in \sB_0 : \mathbf{T}_k(B) \geq \frac{4^k}{|\sB_k|} \sum_{B \in \sB_{k}}\mathbf{T}_k(B)\}$, it follows that
\begin{align}
|\sD_k| &\leq \left( \frac{4^k}{|\sB_k|} \sum_{B \in \sB_{k}}\mathbf{T}_k(B) \right)^{-1} \sum_{B \in \sD_k} \mathbf{T}_k(B)
\nonumber\\
&\leq  \left( \frac{4^k}{|\sB_k|} \sum_{B \in \sB_{k}}\mathbf{T}_k(B) \right)^{-1} \sum_{B \in \sB_0} \mathbf{T}_k(B)
%
 \leq 2^{-k} |\sB_0|
\end{align}
for every $1\leq k \leq \ell$. Since $\sum_{k=1}^\ell 2^{-k}<1$, we deduce that $\bigcup_{k=1}^\ell \sD_k$ is a strict subset of $\sB_0$ and hence that there  exists $B_n\in \sB_0$ such that
\begin{equation}
\mathbf{T}_k(B_n) \leq \frac{4^k}{|\sB_k|} \sum_{B \in \sB_{k}}\mathbf{T}_k(B)
\end{equation}
for every $1 \leq k \leq \ell$.as Since $|\sB_k|=L^{dk}|\sA_k|$, using the identity \eqref{eq:BkAk_identity} and applying \cref{prop:susceptibility_upper} to bound $\mathbf{T}_0(B)$ for each $B \in \sA_k$ yields that there exist constants $L_0=L_0(d,\alpha)$ and $A=A(d,L,\alpha)$ such that if $L\geq L_0$ then this block $B_n$ satisfies
\begin{equation}
\mathbf{T}_k(B_n) \leq 4^k L^{-dk} \frac{A}{c\beta} L^{(d+\alpha)(n+k)}
\end{equation}
for every $1\leq k \leq \ell$ as claimed. The same estimate also holds with $k=0$ by direct application of \cref{prop:susceptibility_upper}.
%
%
\end{proof}

\begin{corollary}
\label{cor:excellent_block}
There exists an integer $L_0=L_0(d,\alpha) \geq 2$ such that the following holds: If $L \geq L_0$ then
there exists a constant $A=A(d,L,\alpha)$ such that for each $n\geq 0$ and $0<\beta <\beta_c$ there exists $\sigma \in \Sigma$ such that the block $B_n=B^\sigma_n(0)$ satisfies
\[
\mathbf{T}_k(B_n) \leq \frac{A}{c\beta} 4^k L^{(d+\alpha)n+\alpha k}
\]
for every $k\geq 0$.
\end{corollary}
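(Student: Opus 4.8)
The plan is to deduce the corollary from \cref{lem:excellent_block} by the same compactness-and-translation scheme that was used to deduce \cref{lem:ancestrally_minimal} from \cref{lem:good_children}. \cref{lem:excellent_block} produces, for a \emph{single} hierarchical decomposition and only \emph{finitely many} scales $0\le k\le \ell$, an ancestrally good $n$-block with the desired control of $\mathbf{T}_k$; the point is to upgrade this to a single decomposition that works at all scales and with the distinguished $n$-block sitting at the origin.

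First I would fix $n\ge 0$ and $0<\beta<\beta_c$, and for each $\ell\ge 0$ invoke \cref{lem:ancestrally_minimal} to obtain a decomposition $\tilde\sigma_\ell\in\Sigma$ under which $\{0\}$ is ancestrally good, so that in particular $B^{\tilde\sigma_\ell}_{n+\ell}(0)$ is an ancestrally good $(n+\ell)$-block. Applying \cref{lem:excellent_block} then yields an ancestrally good $n$-block $B^{(\ell)}$ descended from $B^{\tilde\sigma_\ell}_{n+\ell}(0)$ with $\mathbf{T}_{k,\beta,\tilde\sigma_\ell}(B^{(\ell)}) \le \frac{A}{c\beta}4^k L^{(d+\alpha)n+\alpha k}$ for all $0\le k\le\ell$. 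Choosing any $z_\ell\in B^{(\ell)}$ and setting $\sigma^{(\ell)}:=\tilde\sigma_\ell - z_\ell$, the translation-covariance of the kernels $J_{\sigma,B}$ recorded in \eqref{eq:translation_covariance} gives $B^{\sigma^{(\ell)}}_n(0)=B^{(\ell)}-z_\ell$ together with $\mathbf{T}_{k,\beta,\sigma^{(\ell)}}(B^{\sigma^{(\ell)}}_n(0))=\mathbf{T}_{k,\beta,\tilde\sigma_\ell}(B^{(\ell)})$, so that $B^{\sigma^{(\ell)}}_n(0)$ obeys the same bound for every $0\le k\le\ell$.

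Next I would pass to a limit: by compactness of $\Sigma=(\{0,\dots,L-1\}^d)^{\N}$ in the product topology, the sequence $(\sigma^{(\ell)})_{\ell\ge 0}$ has a pointwise convergent subsequence $\sigma^{(\ell_j)}\to\sigma_\infty\in\Sigma$, and it remains only to check that the bound survives in the limit. This requires the continuity of $\sigma\mapsto\mathbf{T}_{k,\beta,\sigma}(B^\sigma_n(0))$ at $\sigma_\infty$ for each fixed $k$, which is essentially contained in the proof of \cref{lem:weak_continuity}: if $\sigma$ agrees with $\sigma_\infty$ in its first $N\ge n+k$ coordinates, then $\cB^\sigma_m$ and $\cB^{\sigma_\infty}_m$ coincide for $m\le N$, so $B^\sigma_n(0)$ and $B^\sigma_{n+k}(0)$ equal the corresponding blocks of $\sigma_\infty$, and the exploration/coupling argument of \cref{lem:weak_continuity} shows that $\bP_{\beta,\sigma}(x\leftrightarrow y\text{ in }\eta_{B_{n+k}})$ differs from $\bP_{\beta,\sigma_\infty}(x\leftrightarrow y\text{ in }\eta_{B_{n+k}})$ by an amount tending to $0$ as $N\to\infty$, uniformly in $x,y$. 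Since $\mathbf{T}_k$ is a sum of $L^{dn}\cdot L^{d(n+k)}$ such probabilities (in particular always finite), it is continuous at $\sigma_\infty$. Letting $j\to\infty$ along the subsequence, and noting that for $j$ large enough $\ell_j\ge k$ so that the bound on $\mathbf{T}_{k,\beta,\sigma^{(\ell_j)}}(B^{\sigma^{(\ell_j)}}_n(0))$ applies, we obtain $\mathbf{T}_{k,\beta,\sigma_\infty}(B^{\sigma_\infty}_n(0))\le \frac{A}{c\beta}4^k L^{(d+\alpha)n+\alpha k}$; as $k\ge 0$ was arbitrary, taking $\sigma=\sigma_\infty$ proves the corollary. (One moreover sees that $B^{\sigma_\infty}_n(0)$ is again ancestrally good, being an intersection over $m\ge n$ of the closed conditions ``$B^\sigma_m(0)$ is good'' supplied by \cref{lem:weak_continuity}, though this is not needed for the statement.)

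I expect the only genuine work to be in the continuity step of the third paragraph, and even that is minor: it is a direct rerun of the coupling argument already carried out in \cref{lem:weak_continuity}, with the finite sum defining $\mathbf{T}_k$ playing the role of the single expectation treated there. The substantive idea is just the compactness-plus-translation trick that converts the finite-range, single-decomposition conclusion of \cref{lem:excellent_block} into an all-scales, origin-centered one.
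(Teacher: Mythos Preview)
Your proposal is correct and follows essentially the same approach as the paper's own proof: combine \cref{lem:ancestrally_minimal} with \cref{lem:excellent_block} to produce, for each $\ell$, a decomposition and $n$-block with the bound for $0\le k\le\ell$, translate so that the block contains the origin, and then pass to a subsequential limit using compactness of $\Sigma$ together with the continuity of $\mathbf{T}_{k,\beta,\sigma}(B_n^\sigma)$ in $\sigma$ inherited from the proof of \cref{lem:weak_continuity}. Your writeup is in fact more detailed than the paper's (which dispatches the argument in a short paragraph), but the ideas line up exactly.
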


\begin{proof}[Proof of \cref{cor:excellent_block}]
Fix $0<\beta<\beta_c$ and $n\geq0$, let $L_0$ be as in \cref{lem:excellent_block} and suppose that $L \geq L_0$. 
By \cref{lem:ancestrally_minimal} and \cref{lem:excellent_block} we can find for each  $\ell \geq 0$ an $L$-adic decomposition $\sigma$ of $\Z^d$ and an $n$-block $B_n$ of $\sigma$ such that
\[
\mathbf{T}_k(B_n) = \mathbf{T}_{k,\beta,\sigma}(B_n) \leq \frac{A}{c\beta} 4^k L^{(d+\alpha)n+\alpha k}
\]
for every $0\leq k \leq \ell$. By translating $\sigma$ we may take this block $B_n$ to be the $n$-block containing the origin. It follows from the proof of \cref{lem:weak_continuity} that $\mathbf{T}_{k,\beta,\sigma}(B_n^\sigma)$ is a continuous function of $\sigma \in \Sigma$, so that taking a subsequential limit of these decompositions as $\ell\to\infty$ yields a decomposition with the desired properties.
\end{proof}

\begin{proof}[Proof of \cref{thm:main}]
Fix $0<\beta <\beta_c$ and $n\geq 1$, let $L_0$ be as in \cref{cor:excellent_block} and take $L =  L_0 \vee \lceil (2^{5})^{1/(d-\alpha)}\rceil$. Taking the $L$-adic decomposition $\sigma$ whose existence is guaranteed by \cref{cor:excellent_block} and letting $B_n=B_n^\sigma$, we have by that corollary and by \eqref{eq:Rk_expansion} that there exists a constant $A_1=A_1(d,\alpha)$ such that
\begin{align*}\sum_{x,y\in B_n}\bP_{\beta}\left(x\leftrightarrow y\right) &\leq \mathbf{T}_0(B_n) + c\beta L^{-(d+\alpha)n} \sum_{k=1}^\infty L^{-(d+\alpha)k}\mathbf{T}_k(B_n)^2\\
&\leq \frac{A_1}{c\beta}L^{(d+\alpha)n} + c\beta L^{-(d+\alpha)n} \sum_{k=1}^\infty L^{-(d+\alpha)k} \frac{A_1^2}{(c\beta)^2} L^{2(d+\alpha)n+2\alpha k}
\\
&= \frac{A_1}{c\beta}L^{(d+\alpha)n} + \frac{A_1^2}{c\beta} L^{(d+\alpha)n} \sum_{k=1}^\infty 2^{4k} L^{-(d-\alpha)k} \leq \frac{A_1 + A_1^2}{c\beta} L^{(d+\alpha)n},
\end{align*}
where we used that $L^{d-\alpha} \geq 2^5$ in the final inequality. Next observe that if $x$ belongs to one of the $(L-2)^d$ children of $B_n$ that does not intersect the boundary of $B_n$ then the box $x+\Lambda_{L^{n-1}}$ is contained in the block $B_n$. Since there are $(L-2)^d L^{d(n-1)}$ many such $x$, it follows by transitivity that
\[\sum_{x,y\in B_n}\bP_{\beta}\left(x\leftrightarrow y\right) \geq (L-2)^d L^{d(n-1)} \sum_{a \in \Lambda_{L^{n-1}} } \bP_\beta(0\leftrightarrow a).
\]
Combining these two estimates yields that
\[
\sum_{a \in \Lambda_{L^{n-1}} } \bP_\beta(0\leftrightarrow a) \leq \left(\frac{L}{L-2}\right)^d\frac{A_1 +  A_1^2}{c\beta} L^{\alpha n}.
\]
Since $n\geq 0$ and $0<\beta<\beta_c$ were arbitrary and $A_1$ and $L$ were chosen as functions of $d$, $\alpha$, and $c$, and since every $r\geq 1$ is within a factor of $L$ of a power of $L$, it follows that there exists a constant $A_2=A_2(d,\alpha)$ such that
\[
\sum_{a \in \Lambda_r} \bP_\beta(0\leftrightarrow a) \leq \frac{A_2}{c\beta} r^{\alpha}
\]
for every $0<\beta <\beta_c$ and $r\geq 1$. Since connection probabilities can be written as suprema of connection probabilities in finite boxes, they are left-continuous in $\beta$ and the same estimate must also hold at $\beta_c$. 
\end{proof}

\section{Corollaries for the tail of the volume}

We now deduce \cref{cor:volume} from \cref{thm:main}. The proof will apply the following strong form of the \emph{two-ghost inequality} proven in \cite[Theorem 3.1]{hutchcroft2020power}, which we state in the special case of long-range percolation on $\Z^d$. Given a translation-invariant kernel on $\Z^d$ we write $J_x=J(0,x)$ for every $x\in \Z^d$ and write $\sS'_{x,n}$ for the event that $0$ and $x$ both belong to distinct clusters each of which contain at least $n$ vertices and at least one of which is finite.

 \begin{theorem}[Two-ghost inequality]
\label{thm:two_ghost_S} Let $J$ be a translation-invariant kernel on $\Z^d$, let $\beta \geq 0$, 
and suppose that there exist constants $A<\infty$ and $0\leq \theta <1/2$ such that $\bP_{\beta}(|K_o| \geq n) \leq A n^{-\theta}$ for every $n \geq 1$. Then
\begin{align}
\label{eq:improved_two_ghost}
\sum_{x \in \Z^d} (e^{\beta J_x}-1) \bP_{\beta}(\sS_{x,n}')^2 \leq \frac{40000 \cdot A^2}{(1-2\theta)^2 n^{1+2\theta}}  \qquad \text{ for every $n\geq 1$.}
\end{align}
\end{theorem}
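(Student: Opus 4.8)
The plan is to use a \emph{ghost-field} argument together with Reimer's inequality (the general disjoint-occurrence inequality). First, enlarge the probability space by colouring each vertex of $\Z^d$ \emph{green} independently with probability $1-e^{-1/n}$; write $\mathcal{G}$ for the set of green vertices and $\bP_{\beta,n}$, $\bE_{\beta,n}$ for the joint law and expectation. Since a cluster with at least $n$ vertices meets $\mathcal{G}$ with probability at least $1-e^{-1}$, and since on the event $\{0\not\leftrightarrow x\}$ the events $\{K_0\cap\mathcal{G}\ne\emptyset\}$ and $\{K_x\cap\mathcal{G}\ne\emptyset\}$ are conditionally independent given the percolation configuration (the two clusters being disjoint), one has $\bP_\beta(\sS'_{x,n})\le (1-e^{-1})^{-2}\,\bP_{\beta,n}(\widehat{\sS}'_x)$, where $\widehat{\sS}'_x$ is the event that $0\not\leftrightarrow x$, that $K_0$ and $K_x$ both meet $\mathcal{G}$, and that at least one of $K_0,K_x$ is finite. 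It therefore suffices to bound $\sum_{x}(e^{\beta J_x}-1)\,\bP_{\beta,n}(\widehat{\sS}'_x)^2$. I would also record the elementary consequences of the hypothesis that $\bE_\beta[\,|K_0|\wedge n\,]\le \tfrac{A}{1-\theta}n^{1-\theta}$ and $\sum_{k=1}^{n}\bP_\beta(|K_0|\ge k)^2\le \tfrac{A^2}{1-2\theta}n^{1-2\theta}$; in particular $\bP_{\beta,n}(0\leftrightarrow\mathcal{G})=\bE_\beta[1-e^{-|K_0|/n}]\le \tfrac1n\bE_\beta[|K_0|\wedge n]\le \tfrac{A}{1-\theta}n^{-\theta}$.

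Second, the algebraic identity behind the shape of the statement. Since $\widehat{\sS}'_x$ forces the edge $\{0,x\}$ to be closed and the status of $\{0,x\}$ is independent of all other edges, $\bP_{\beta,n}(\widehat{\sS}'_x)=e^{-\beta J_x}\,\bP_{\beta,n}(\widetilde{\widehat{\sS}}'_x)$, where $\widetilde{\widehat{\sS}}'_x$ denotes the same event computed with $\{0,x\}$ deleted; hence
\[
(e^{\beta J_x}-1)\,\bP_{\beta,n}(\widehat{\sS}'_x)^2 \;=\; \bP_{\beta,n}(\mathscr{P}_x)\cdot\bP_{\beta,n}(\widehat{\sS}'_x),
\qquad
\mathscr{P}_x:=\bigl\{\{0,x\}\ \text{open}\bigr\}\cap\widetilde{\widehat{\sS}}'_x,
\]
and on $\mathscr{P}_x$ the edge $\{0,x\}$ is a \emph{pivotal bridge}: deleting it splits $K_0$ into two pieces, each meeting $\mathcal{G}$, at least one of them finite. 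Summing over $x$ reduces the problem to showing that $\sum_x \bP_{\beta,n}(\mathscr{P}_x)\,\bP_{\beta,n}(\widehat{\sS}'_x)\le 40000A^2(1-2\theta)^{-2}n^{-1-2\theta}$. Reimer's inequality applied to $\widehat{\sS}'_x$ --- which exhibits the disjoint occurrence of $\{0\leftrightarrow\mathcal{G},\,|K_0|<\infty\}$ and $\{x\leftrightarrow\mathcal{G}\}$, witnessed by the two disjoint clusters --- together with translation invariance bounds $\bP_{\beta,n}(\widehat{\sS}'_x)$ by $2\bP_{\beta,n}(0\leftrightarrow\mathcal{G},\,|K_0|<\infty)\,\bP_{\beta,n}(0\leftrightarrow\mathcal{G})$ uniformly in $x$, supplying one factor of order $n^{-\theta}$; the remaining factor of order $n^{-1-\theta}$ must come from $\sum_x\bP_{\beta,n}(\mathscr{P}_x)$. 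Here one exploits that the pieces cut off at $0$ by distinct pivotal bridges are pairwise \emph{disjoint}: each such piece has at least $n$ vertices and meets $\mathcal{G}$, so injecting each pivotal bridge into a green vertex of its cut-off piece shows that $\#\{x:\mathscr{P}_x\text{ holds}\}$ is at most the number of green vertices separated from $0$ by a bridge at $0$, plus one. One then bounds the expectation of this count by exploring the cluster of $0$ and, at each discovered cut edge, using Reimer to decouple the two sides and invoking the tail hypothesis; the constraint $|K_x|\ge n$ is exactly what forces each cut-off piece to carry a green vertex, and this is what converts what would otherwise be a divergent $\bE_\beta|K_0|$ into the convergent tail sum $\sum_{k\ge n}k^{-\theta-1}\asymp n^{-\theta}$, with the prefactor $1/n$ coming from the ghost density.

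The step I expect to be the main obstacle is carrying out this last estimate \emph{uniformly in $\beta$} while losing no more than a total factor of $A^2$. The completely naive execution --- bounding $\bP_{\beta,n}(\mathscr{P}_x)=(1-e^{-\beta J_x})\bP_{\beta,n}(\widetilde{\widehat{\sS}}'_x)$ by Reimer and then summing --- produces the factor $\sum_x(1-e^{-\beta J_x})=\bE_\beta[\deg 0]$, which is not bounded in $\beta$, together with an extra factor from the ``wrong'' side of each bridge, and so yields only the much weaker $\lesssim A^3n^{-3\theta}$. Obtaining the claimed bound instead seems to require handling the two ghost-connection conditions inside $\widehat{\sS}'_x$ jointly, through a single cluster exploration that simultaneously tracks the two disjoint ghost-touching pieces attached to a pivotal bridge and charges the event to each side's tail bound exactly once, so that it is the second-moment quantity $\sum_{k\le n}\bP_\beta(|K_0|\ge k)^2\le A^2 n^{1-2\theta}/(1-2\theta)$ --- rather than $\bE_\beta|K_0|$ or $\bE_\beta[\deg 0]$ --- that governs the sum. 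Arranging all the disjoint-occurrence inequalities to be applied on genuinely disjoint edge sets, handling the ``at least one finite'' clause correctly when $K_0$ is infinite, and extracting the precise exponent $n^{-1-2\theta}$ with a $(1-2\theta)^{-2}$ from the resummation, is where the substantive work lies; the remainder is bookkeeping of universal constants.
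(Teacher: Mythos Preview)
The paper does not contain its own proof of this theorem: it is quoted verbatim from \cite[Theorem~3.1]{hutchcroft2020power} and used as a black box in the proof of Corollary~1.2. So there is no ``paper's own proof'' to compare against here, only the external reference.

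That said, your outline is on the right track and matches the architecture of the proof in \cite{hutchcroft2020power} (which in turn refines the Aizenman--Kesten--Newman argument and the earlier version in \cite{1808.08940}): one does introduce a ghost field of density $\asymp 1/n$, one does convert the weighted sum $\sum_x(e^{\beta J_x}-1)\bP(\sS'_{x,n})^2$ into an expected count of pivotal ``bridge'' edges for the two-ghost event, and one does control that count via a cluster exploration. Your algebraic identity $(e^{\beta J_x}-1)\bP(\widehat\sS'_x)^2=\bP(\mathscr P_x)\bP(\widehat\sS'_x)$ is correct and is exactly the mechanism that turns the squared probability into a first-moment quantity.

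However, your proposal is not a proof: you explicitly flag the step that would yield $\sum_x\bP_{\beta,n}(\mathscr P_x)\,\bP_{\beta,n}(\widehat\sS'_x)\lesssim A^2(1-2\theta)^{-2}n^{-1-2\theta}$ as ``the main obstacle'' and do not carry it out. Your diagnosis of why the naive bound fails (it produces $\bE_\beta[\deg 0]$ or $\bE_\beta|K_0|$, neither of which is controlled) is accurate, and your intuition that the correct argument must charge each side of a bridge to the tail bound exactly once, producing the second-moment sum $\sum_{k\le n}\bP_\beta(|K_0|\ge k)^2$, is also right. What is missing is the actual implementation: in \cite{hutchcroft2020power} this is done not by repeated applications of Reimer's inequality along an exploration, but via a \emph{martingale} argument. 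One explores the cluster of the origin vertex-by-vertex, defines a martingale whose increments are governed by the conditional probability that the next revealed edge is a two-ghost bridge, and applies the optional stopping theorem together with an $L^2$-maximal inequality; the tail hypothesis enters when bounding the quadratic variation, which is where the factor $A^2(1-2\theta)^{-2}$ and the exponent $n^{-1-2\theta}$ emerge cleanly. Your sketch of ``exploring the cluster and using Reimer at each cut edge'' gestures at this but does not supply the martingale structure that makes the bookkeeping close, and without it the argument as written remains a heuristic.
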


This inequality strengthens inequalities appearing in our earlier works \cite{1808.08940,Hutchcroft2020Ising} and is closely related to the classical work of Aizenman, Kesten, and Newman \cite{MR901151}.

\begin{proof}[Proof of \cref{cor:volume}]
 Let $\theta=(d-\alpha)/2d<1/2$. We first claim that there exists a constant $C_0=C_0(d,\alpha)$ such that the implication
\begin{multline}\text{$\Bigl(\bP_\beta(|K|\geq n) \leq A n^{-\theta}$ for every $n\geq 1\Bigr)$}\\ \Rightarrow  \text{$\Bigl(\bP_\beta(|K|\geq n) \leq \frac{C_0 \sqrt{A+1}}{(c\beta)^{d/(4d-2\alpha)}} n^{-\theta}$ for every $n\geq 1\Bigr)$}
\label{eq:general_bootstrap}
\end{multline}
holds for every $1 \leq A < \infty$ and $0 \leq \beta < \beta_c$. 
 
 Fix $0< \beta <\beta_c$ and $1 \leq A < \infty$ and suppose that $\bP_\beta(|K|\geq n) \leq A n^{-\theta}$ for every $n\geq 1$.
We have by translation-invariance, the Harris-FKG inequality, and a union bound that
\begin{equation}
\label{eq:botstrap_union}
\bP_\beta(|K|\geq n)^2 \leq \bP_\beta(|K(x)|\geq n \text{ and } |K(0)|\geq n) \leq \bP_\beta(\sS'_{x,n})+\bP_\beta(0\leftrightarrow x)
\end{equation}
for each $x\in \Z^d$ and $n\geq 1$.
Since $\theta<1/2$, we have by \cref{thm:two_ghost_S} that
there exists a constant $C_1=C_1(d,\alpha)$ such that
\[
\sum_{x \in \Z^d} (e^{\beta J_x}-1) \bP_\beta(\sS_{x,n}')^2 \leq C_1^2A^2 n^{-(1+2\theta)}
\]
for every $n \geq 1$, and it follows by Cauchy-Schwarz that there exists a constant $C_2=C_2(d,\alpha)$ such that
\begin{align}
\hspace{-0.5em}\sum_{x \in \Lambda_r} \bP_\beta(\sS_{x,n}') &\leq \left[\sum_{x \in \Lambda_r} (e^{\beta J_x}-1) \bP_\beta(\sS_{x,n}')^2\right]^{1/2}\left[\sum_{x \in \Lambda_r \setminus \{0\}} \frac{1}{e^{\beta J_x}-1}\right]^{1/2} 
\nonumber
\\
&\leq C_1 A n^{-(1+2\theta)/2} \left( \frac{1}{c\beta r^{-d-\alpha}} |\Lambda_r| \right)^{1/2} 
\leq \frac{C_2 A}{\sqrt{c\beta}} n^{-(1+2\theta)/2} r^{d+\alpha/2}
\label{eq:two_ghost_massage}
\end{align}
for every $r\geq 1$, where we used that $e^x-1 \geq x$ in the  second line. Averaging \eqref{eq:botstrap_union} over $x\in \Lambda_r$ and using \eqref{eq:two_ghost_massage} to control the first term and \cref{thm:main} to control the second yields that there exists a constant $C_3=C_3(d,\alpha)$ such that
\begin{equation}
\label{eq:about_to_optimize}
\bP_\beta(|K|\geq n)^2 \leq  \frac{C_2 A}{\sqrt{c\beta}
} n^{-(1+2\theta)/2} r^{\alpha/2} + \frac{C_3}{c\beta} r^{-d+\alpha}
\end{equation}
for every $n,r\geq 1$. We optimize this bound by taking $r^{d-\alpha/2}=\lceil n^{(1+2\theta)/2} (c\beta)^{-1/2}\rceil$, noting that \cref{cor:betabound} implies that there exists a positive constant $\beta_1=\beta_1(\alpha,d)$ such that $c\beta \leq \beta_1$ and hence that $\lceil n^{(1+2\theta)/2} (c\beta)^{-1/2}\rceil$ is bounded above by $C_4 n^{(1+2\theta)/2} (c\beta)^{-1/2}$ for some $C_4=C_4(d,\alpha)$. (Here we are just using that rounding up a number that is bounded away from zero increases that number by at most a bounded multiplicative factor.) Substituting this value of $r$ into \eqref{eq:about_to_optimize} yields that there exists a constant $C_5=C_5(d,\alpha)$ such that
\begin{equation}
\bP_\beta(|K|\geq n)^2 \leq C_5( A + 1) (c\beta)^{-d/(2d-\alpha)} n^{-(1+2\theta)\frac{d-\alpha}{2d-\alpha}}= C_5( A + 1) (c\beta)^{-d/(2d-\alpha)} n^{-2\theta}
\end{equation}
where the final equality follows by choice of $\theta$. Taking square roots of both sides concludes the proof of \eqref{eq:general_bootstrap}.

We now deduce the claimed inequality from the bootstrap implication \eqref{eq:general_bootstrap}. For each $0<\beta<\beta_c$ consider the quantity
\[
A(\beta)=\min\Bigl\{ A \geq 1 : \bP_\beta(|K|\geq n) \leq An^{-\theta} \text{ for every $n\geq 1$}\Bigr\},
\]
which is finite by sharpness of the phase transition. (Indeed, for $\beta<\beta_c$ the tail probability $\bP_\beta(|K|\geq n)$ decays exponentially in $n$ \cite{1901.10363}.) Applying \eqref{eq:general_bootstrap} yields that
\[
A(\beta) \leq \frac{C_0\sqrt{A(\beta)+1}}{(c\beta)^{d/(4d-2\alpha)}}
\]
for every $\beta < \beta_c$, and hence by \cref{cor:betabound} that there exists a constant $C_6=C_6(d,\alpha)$ such that $A(\beta) \leq C_6 (c\beta)^{-d/(2d-\alpha)}$ for every $0< \beta <\beta_c$. It follows in particular that 
\[\bP_\beta(|K|\geq n) \leq C_6 (c\beta)^{-d/(2d-\alpha)} n^{-\theta}\] for every $0<\beta<\beta_c$ and $n\geq 1$, and hence also for $\beta=\beta_c$ by monotone convergence.
\end{proof}

\section{Open problems}

We close the paper by highlighting some interesting directions for future research that we believe may be within the scope of modern methods.

\medskip

\noindent \textbf{Lower bounds.} Perhaps the most obvious question raised  by this work is as follows. The problem is most interesting when $\alpha \geq d/3$ and $d \leq 6$ so that high-dimensional techniques should not apply.

\begin{problem}
Find conditions under which the upper bound of \cref{thm:main} admits a matching lower bound.
\end{problem}

\begin{remark}\label{remark:BaumlerBerger}
After this paper first appeared, B\"aumler and Berger \cite{baumler2022isoperimetric} established a matching lower bound holding for all $\alpha < 1$, for every dimension $d\geq 1$. (A lower bound that is matching to within a log factor also holds for $\alpha =1$.) Together with our results, this establishes that the exponent $\eta$ satisfies $2-\eta=\alpha$ whenever it is well-defined and $\alpha \leq 1$, which handles all relevant values of $\alpha$ in the one-dimensional case as well as a non-trivial interval $\alpha \in (2/3,1)$ of non-mean-field models in the two-dimensional case.
\end{remark}

\noindent \textbf{Non-perturbative proofs of mean-field critical behaviour}.
Regarding the high-dimensional case, we were frustrated that we were not able to prove a strong enough form of \cref{thm:main} to prove that the model always has mean-field critical behaviour for $\alpha<d/3$; there is too much averaging in our bounds to use them to bound the triangle diagram directly. 

\begin{problem}
\label{problem:triangle}
Prove under the hypotheses of \cref{thm:main} that the triangle condition is satisfied when $\alpha<d/3$.
\end{problem}

See \cite{HutchcroftTriangle,heydenreich2015progress} for background on the triangle condition. Note that mean-field behaviour of the model in this regime has already been established under perturbative conditions using the lace expansion \cite{MR2430773,MR3306002}; the problem is to give a non-perturbative proof. 
It would also be very interesting to prove that mean-field critical behaviour holds to within polylogarithmic factors when $\alpha=d/3$ as was done for the hierarchical lattice in \cite{hutchcrofthierarchical,HutchcroftTriangle}.

\medskip

\noindent \textbf{\cref{thm:main} with less averaging.}
There are two natural directions to try to strengthen \cref{thm:main}, either of which may lead to a solution of \Cref{problem:triangle} (perhaps under slightly stronger hypotheses): pointwise bounds and bounds on Fourier coefficients. 

\begin{problem}
Prove under the hypotheses of \cref{thm:main} that if $J$ also satisfies an upper bound of the form $J(x,y)\leq C \|x-y\|^{-d-\alpha}$ then $\bP_{\beta_c}(x\leftrightarrow y) \preceq \|x-y\|^{-d+\alpha}$ for $x,y\in \Z^d$ distinct.
\end{problem}

\begin{problem}
Prove under the hypotheses of \cref{thm:main} that the Fourier transform $\hat \tau_{\beta_c}$ of the two-point function $\tau_{\beta_c}(x)=\bP_{\beta_c}(0\leftrightarrow x)$ satisfies the bound $\hat \tau_{\beta_c}(\theta) \preceq \|\theta\|^{-\alpha}$ for every $\theta \in [-\pi,\pi]^d$.
\end{problem}

 \noindent \textbf{Near-critical behaviour.} In a more speculative direction, let us mention the problem of understanding the near-critical behaviour of the model. The interested reader may find the work of Slade on the spin $O(n)$ model \cite{MR3772040} to be inspiring.

\begin{problem}
Investigate the near-critical behaviour of the model. Are the exponents $\gamma$ and $\beta$ the same for long-range percolation on $\Z^d$ and the hierarchical lattice when $\alpha<\alpha_c$?
\end{problem}

Let us end with the following vague question, referring the reader to \cite{LimitsOfUniversality} for related discussions.

\begin{question}
To what extent can long-range percolation on $\Z^d$ and the hierarchical lattice be said to belong to ``the same universality class" when $\alpha<\alpha_c$?
\end{question}

\subsection*{Acknowledgments} We thank Philip Easo, Emmanuel Michta, Gordon Slade, and the anonymous referee for helpful comments on earlier versions of this manuscript.






 \setstretch{1}
 \footnotesize{
  \bibliographystyle{abbrv}
  \bibliography{unimodularthesis.bib}
  }

\end{document}